\newtheorem{theorem}{Theorem}[section]
\newtheorem{lemma}[theorem]{Lemma}
\newtheorem{proposition}[theorem]{Proposition}
\newtheorem{df}[theorem]{Definition}
\newtheorem{kor}[theorem]{Corollary}
\newtheorem{conj}[theorem]{Conjecture}
\newtheorem*{rem}{Remark}
\title{Extension of a Diophantine triple with the property $D(4)$}
\author[a]{Marija Bliznac Trebje\v{s}anin}
\affil[a]{University of Split, Faculty of Science\\ Ru\dj{}era Bo\v{s}kovi\'{c}a 33, 21000 Split, Croatia\\
Email: marbli@pmfst.hr}
\date{}
\begin{document}
\maketitle

\begin{abstract} In this paper, we give an upper bound on the number of extensions of a triple to a quadruple for the Diophantine $m$-tuples with the property $D(4)$. We also confirm the conjecture of the uniqueness of such an extension in some special cases.
\end{abstract}

\noindent 2010 {\it Mathematics Subject Classification:} 11D09, 11D45, 11J86
\\ \noindent Keywords: diophantine tuples, Pell equations, reduction method, linear forms in logarithms.

\section{Introduction}
\begin{df} Let $n\neq0$ be an integer. We call a set of $m$ distinct positive integers a $D(n)$-$m$-tuple, or $m$-tuple with the property $D(4)$, if the product of any two of its distinct elements increased by $n$ is a perfect square.
\end{df}

One of the most interesting and frequently studied questions is how large these sets can be. In the classical case (when $n=1$), first studied by Diophantus, Dujella proved in \cite{duje_kon} that a $D(1)$-sextuple does not exist and that there are at most finitely many quintuples. Over the years many authors have improved the upper bound for the number of $D(1)$-quintuples. Finally, in \cite{petorke}, He, Togb\'{e} and Ziegler established the nonexistence of $D(1)$-quintuples. Details of the history of the problem with all references are available at this Web address \cite{duje_web}. 

Variants of the problem when $n=4$ or $n=-1$ are also studied frequently. In the case $n=4$, similar conjectures and observations can be made as in the $D(1)$ case. 
In the light of this observation, Filipin and the author have proven in \cite{nas2} that a $D(4)$-quintuple also does not exist. 

In both cases, $n=1$ and $n=4$, conjectures about the uniqueness of an extension of a triple to a quadruple with a larger element are still open. Moreover, in the case $n=-1$, a conjecture about the nonexistence of a quadruple is studied. For a survey of the latter case of the problem one can see \cite{cipu}.

A $D(4)$-pair can be extended with a larger element $c$ to form a $D(4)$-triple. The smallest such $c$ is $c=a+b+2r$, where $r=\sqrt{ab+4}$ and such a triple is often called a regular triple, or in the $D(1)$ case it is also called an Euler triple. There are infinitely many extensions of a pair to a triple and they can be studied by finding solutions to a Pellian equation
\begin{equation}\label{par_trojka}
bs^2-at^2=4(b-a),\end{equation}
where $s$ and $t$ are positive integers defined by $ac+4=s^2$ and $bc+4=t^2.$

For a $D(4)$-triple $\{a,b,c\}$, $a<b<c$, we define
$$d_{\pm}=d_{\pm}(a,b,c)=a+b+c+\frac{1}{2}\left(abc\pm \sqrt{(ab+4)(ac+4)(bc+4)}\right).$$
It is straightforward to check that $\{a,b,c,d_{+}\}$  is a $D(4)$-quadruple, which we will call a regular quadruple. If $d_{-}\neq 0$ then $\{a,b,c,d_{-}\}$ is also a regular $D(4)$-quadruple with $d_{-}<c$.

\begin{conj}
Any $D(4)$-quadruple is regular.
\end{conj}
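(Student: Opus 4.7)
Assume toward a contradiction that $\{a,b,c,d\}$ is a non-regular $D(4)$-quadruple with $a<b<c<d$, so $d \neq d_{+}(a,b,c)$. Writing $ad+4=x^{2}$, $bd+4=y^{2}$, $cd+4=z^{2}$ with positive integers $x,y,z$, I would eliminate $d$ to produce the simultaneous Pellian system
\begin{equation*}
cx^{2} - az^{2} = 4(c-a), \qquad cy^{2} - bz^{2} = 4(c-b),
\end{equation*}
coupled to \eqref{par_trojka}. The complete solution set of each Pellian is a finite union of binary recurrence sequences, $z = V_{m}^{(i)}$ from the first equation and $z = W_{n}^{(j)}$ from the second, with characteristic roots built from $s=\sqrt{ac+4}$, $t=\sqrt{bc+4}$ and $r=\sqrt{ab+4}$. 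The goal is to determine when $V_{m}^{(i)} = W_{n}^{(j)}$ can occur for indices $m,n \geq 2$, the case $m=n=1$ being expected to recover exactly the regular extension $d_{+}$.

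The first technical step is to bound the fundamental solutions $(x_{0},y_{0},z_{0})$ of the system, thereby fixing a short explicit list of admissible initial conditions for the recurrences. Next, I would extract congruence information by reducing $V_{m} \equiv W_{n}$ modulo $4c$, $4c^{2}$, and modulo $a$ and $b$; a calculation in the style of Dujella should force $n = m$ up to a bounded shift, together with an identity of the form $2am^{2} \equiv 2bn^{2} \pmod{c}$. The crucial consequence to push through is a gap principle, ideally $d > c^{2}/a$ or stronger, valid for every non-regular extension $d$. With such a gap in hand, the recurrence index satisfies $m \gtrsim \tfrac{1}{2}\log c / \log\alpha$ for the dominant characteristic root $\alpha$.

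I would then apply Matveev's theorem on linear forms in three logarithms to
\begin{equation*}
\Lambda \;=\; m \log\alpha_{1} \;-\; n \log\alpha_{2} \;+\; \log\beta,
\end{equation*}
where $\alpha_{1}, \alpha_{2}$ are the dominant roots of the two recurrences and $\beta$ packages the initial data together with $\sqrt{a},\sqrt{b},\sqrt{c}$. Matveev's bound gives $m \leq C_{1}(\log c)(\log\log c)$ for an effective absolute constant $C_{1}$, which combined with the lower bound on $m$ yields an absolute bound $c \leq C_{2}$. I would then reduce this huge constant to a tractable range by a Baker--Davenport argument performed parametrically in $a,b,c$, either via continued fractions of $\log\alpha_{1}/\log\alpha_{2}$ or via LLL reduction on the three-dimensional lattice spanned by $\alpha_{1},\alpha_{2},\beta$, and finish by a direct search over the surviving triples.

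The decisive obstacle --- and the reason the conjecture remains open --- is that the reduction must be executed uniformly over the infinite two-parameter family of admissible triples $(a,b,c)$ rather than for a single numerical case. The gap principle one can realistically prove is too weak in the regime where $a$ is small while $b$ and $c$ are large, and in that regime Matveev's bound leaves a range of $c$ that is still astronomical and cannot be swept by a finite computation. A successful resolution would therefore demand either a substantially sharper gap principle coupling $d$ tightly to $abc$, or a structural theorem about the solutions of \eqref{par_trojka} fine enough that each Pellian class of $(a,b,c)$ can be dispatched individually. Overcoming this uniformity barrier is where essentially all of the work would lie, and it is precisely the step on which the parallel attempts for the $D(1)$ analogue have stalled.
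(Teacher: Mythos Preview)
The statement you are attempting is labelled \emph{Conjecture} in the paper, and the paper does not claim to prove it; it is explicitly described as ``still open'' in the introduction. So there is no ``paper's own proof'' to compare against: the paper proves only partial results in its direction (Theorems~\ref{teorem_izbacen_slucaj}--\ref{teorem_prebrojavanje}), using precisely the Pellian/recurrence/Matveev/Baker--Davenport machinery you outline.

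Your proposal is therefore not a proof but a (largely accurate) sketch of the standard strategy, together with a correct diagnosis of why it does not close. You yourself identify the genuine gap: the reduction step must be carried out uniformly over the infinite family of triples $(a,b,c)$, and the available gap principles are too weak when $a$ is small relative to $b,c$. That is exactly the obstruction the paper works around rather than through --- it sharpens the initial-value analysis (Theorem~\ref{teorem_izbacen_slucaj}), proves $d=d_{+}$ only under extra size hypotheses on $c$ (Theorem~\ref{teorem1.5}), and bounds the \emph{number} of irregular extensions (Theorem~\ref{teorem_prebrojavanje}) rather than eliminating them. One minor inaccuracy in your outline: the regular extension $d_{+}$ is not recovered at $m=n=1$ in general but at the small-index solutions listed in Lemma~\ref{regularna_rjesenja}, which depend on the fundamental solution $(z_{0},z_{1})$.
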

 
Results which support this conjecture in some special cases can be found for example in \cite{bf}, \cite{dujram}, \cite{fil_par} and \cite{fht}. Some of these results are stated in the next section and will be used in our proofs.

In \cite{fm}, Fujita and Miyazaki approached this conjecture in the $D(1)$ case differently. They examined how many possibilities there are to extend a fixed Diophantine triple with a larger integer. They improved their result from \cite{fm} further in their joint work with Cipu \cite{cfm}, where they have shown that any triple can be extended to a quadruple in at most $8$ ways.

In this paper, we will follow the approach and ideas from \cite{cfm} and \cite{fm} in order to deduce similar results for extensions of a $D(4)$-triple. Usually, the numerical bounds and coefficients are slightly better in the $D(1)$ case, which can be seen after comparing for instance Theorem \ref{teorem1.5} and \cite[Theorem 1.5]{fm}. In order to overcome this problem, we deduce a better numerical lower bound on the element $b$ in an irregular $D(4)$-quadruple, and consider separately all special cases which appeared in the proof (similar to \cite{nas2}).

\medskip
Let $\{a,b,c\}$ be a $D(4)$-triple which can be extended to a quadruple with an element $d$. Then there exist positive integers $x,y,z$ such that
$$ad+4=x^2,\quad bd+4=y^2, \quad cd+4=z^2.$$
By eliminating $d$ from these equations we get a system of generalized Pellian equations
\begin{align}
cx^2-az^2&=4(c-a),\label{prva_pelova_s_a}\\
cy^2-bz^2&=4(c-b).\label{druga_pelova_s_b}
\end{align}

There exists only finitely many fundamental solutions $(z_0,x_0)$ and $(z_1,y_1)$ to these Pellian equations and any solution to the system satisfy $z=v_m=w_n$, where $m$ and $n$ are non-negative integers and  ${v_m}$ and ${w_n}$ are recurrent sequences defined by
\begin{align*}
&v_0=z_0,\ v_1=\frac{1}{2}\left(sz_0+cx_0\right),\ v_{m+2}=sv_{m+1}-v_{m},\\
&w_0=z_1,\ w_1=\frac{1}{2}\left(tz_1+cy_1 \right),\ w_{n+2}=tw_{n+1}-w_n.
\end{align*}

The initial terms of these sequences  were determined by Filipin in \cite[Lemma 9]{fil_xy4}. Notably, one of the results of this paper is an improvement of that lemma by eliminating the case where $m$ and $n$ are even and $|z_0|$ is not explicitly determined.

\begin{theorem}\label{teorem_izbacen_slucaj}
Suppose that $\{a,b,c,d\}$ is a $D(4)$-quadruple with $a<b<c<d$ and that  $w_m$ and  $v_n$  are defined as before. 
\begin{enumerate}
\item[i)] If  equation $v_{2m}=w_{2n}$ has a solution, then $z_0=z_1$ and $|z_0|=2$ or $|z_0|=\frac{1}{2}(cr-st)$.
\item[ii)] If equation $v_{2m+1}=w_{2n}$ has a solution, then $|z_0|=t$, $|z_1|=\frac{1}{2}(cr-st)$ and $z_0z_1<0$.
\item[iii)] If equation $v_{2m}=w_{2n+1}$ has a solution, then $|z_1|=s$, $|z_0|=\frac{1}{2}(cr-st)$ and $z_0z_1<0$.
\item[iv)] If equation $v_{2m+1}=w_{2n+1}$ has a solution, then $|z_0|=t$, $|z_1|=s$ and $z_0z_1>0$.
\end{enumerate}
Moreover, if $d>d_+$, case $ii)$ cannot occur.
\end{theorem}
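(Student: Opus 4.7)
The plan is to analyze each of the four parity cases by combining the standard modular analysis of the binary recurrences $(v_m)$ and $(w_n)$ with the bounds on fundamental solutions of the Pellian equations \eqref{prva_pelova_s_a} and \eqref{druga_pelova_s_b}. Using the recurrence $v_{m+2} = sv_{m+1} - v_m$ together with $s^2 \equiv 4 \pmod{c}$, I would first establish that $v_{2m} \equiv z_0 \pmod{c}$ and that $v_{2m+1}$ reduces modulo $c$ (or modulo $c/2$) to an explicit expression involving $sz_0$. The analogous congruences hold for $w_n$ with $t, z_1$ in place of $s, z_0$. Equating $v_m = w_n$ then yields a congruence between $z_0$ and $z_1$ that, combined with the classification of admissible fundamental solutions from \cite{fil_xy4} and the standard size bounds on $|z_0|, |z_1|$, restricts the initial values to those listed in each case. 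The sign conditions $z_0 z_1 \lessgtr 0$ come from matching the sign of $v_1$ (resp.\ $w_1$) against the parity of the indices, taking into account the positivity $v_m = w_n > 0$.

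For cases (ii), (iii) and (iv), this strategy essentially reproduces Filipin's \cite[Lemma 9]{fil_xy4}: the asymmetry introduced by the odd index rules out every candidate fundamental solution except those attached to the stated values of $|z_0|$ and $|z_1|$. The new contribution is case (i). Here, after deducing $z_0 = z_1$ from the congruence argument, one must eliminate an extra candidate value of $|z_0|$ that Filipin's method leaves undetermined. To do this, I would refine the congruence analysis by passing to a higher modulus (say $c^2$, or $2c$ when a parity of $z_0$ must be tracked) and invoke structural identities between $s, t$ and $r$ coming from the $D(4)$ conditions $ab+4 = r^2$, $ac+4=s^2$, $bc+4=t^2$. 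This should force the two possibilities $|z_0| = 2$ (the trivial extension corresponding to $d = 0$) and $|z_0| = \tfrac12(cr - st)$ (the fundamental solution associated with the regular extension).

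For the final claim that case (ii) cannot occur when $d > d_+$, I would compare the value of $d$ produced by the minimal admissible indices $m, n$ in case (ii)—with $|z_0|=t$ and $|z_1|=\tfrac12(cr-st)$—against $d_+ = a+b+c+\tfrac12(abc + srt)$, and verify via a direct size estimate that case (ii) always forces $d \leq d_+$, contradicting the assumption. The main obstacle will be the refinement in case (i): upgrading Filipin's analysis requires a genuinely new combination of the modular arithmetic with the multiplicative structure encoded by the Pell units attached to $s$ and $t$, and this is the only step that does not reduce to bookkeeping from \cite{fil_xy4}.
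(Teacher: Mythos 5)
Your handling of cases (ii)--(iv), and the derivation of $z_0=z_1$ in case (i) from the congruences $v_{2m}\equiv z_0$, $w_{2n}\equiv z_1 \pmod{c}$, matches the route through Filipin's \cite[Lemma 9]{fil_xy4}, which the paper simply cites as Lemma~\ref{lemma_fil_pocetni}. The problem is the one step you identify as the new contribution. Filipin's congruence analysis leaves \emph{three} possibilities in the even--even case: $|z_0|=2$, $|z_0|=\frac12(cr-st)$, or $|z_0|<1.608a^{-5/14}c^{9/14}$, and the entire content of the theorem is eliminating the third. That third case is not an artifact of working with too coarse a modulus: it corresponds to an actual integer $d_0=(z_0^2-4)/c$ with $0<d_0<c$ for which $\{a,b,d_0,c\}$ is an \emph{irregular} $D(4)$-quadruple. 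No refinement of the congruence to modulus $c^2$ or $2c$, nor any identity among $r,s,t$, can exclude this, because excluding it is equivalent to excluding irregular quadruples $\{a,b,d_0,c\}$ with $d_0<c$ --- a global Diophantine statement of exactly the type the regularity conjecture addresses, not something visible locally. Your claim that a "genuinely new combination of modular arithmetic with the multiplicative structure of the Pell units" should force $|z_0|\in\{2,\tfrac12(cr-st)\}$ is therefore not a proof sketch but a restatement of the difficulty.

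The paper's actual elimination is analytic. The existence of such a $d_0$ forces large lower bounds on the indices of the recurrences attached to the quadruple $\{a,b,d_0,c\}$ (Lemmas~\ref{lema_uvodna_teorem_izbacen} and \ref{donje na m i n}), hence via Lemmas~\ref{lemaw4w5w6} and \ref{lema_d_vece} a lower bound of the shape $c>0.24\,a^{4.5}b^{5.5}$ (or $c\gg b^{7.5}$, $c\gg b^{14.75}$ in the various subcases). Once $c$ is that large relative to $b$, Proposition~\ref{propozicija_d+} --- whose proof rests on the Rickert-type hypergeometric estimates of Lemmas~\ref{lema_rickert} and \ref{lema_rickert2} played against the congruence-derived lower bounds on $n$ --- forces $d=d_+$, contradicting $d>d_+$; one residual subcase ($m'=n'=4$) additionally requires an infinite-descent step producing ever smaller irregular quadruples. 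None of this machinery appears in your plan, so the key step is missing. Separately, for the final assertion the paper does not compare $d$ against $d_+$ by a size estimate on minimal indices; it observes via Lemma~\ref{lema_tau} that $|z_1|=\frac12(cr-st)$ forces $c<a^2b\tau^{-4}$ while $|z_0|=t$ forces $c>ab^2$, and these are incompatible when $d>d_+$. Your proposed estimate is not obviously wrong, but it is not carried out and the paper's route is the one that closes the case.
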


Also, we improved a bound on $c$ in the terms of $b$ for which an irregular extension might exist.

\begin{theorem}\label{teorem1.5}
Let $\{a,b,c,d\}$ be a $D(4)$-quadruple and $a<b<c<d$. If one of the following conditions hold
\begin{enumerate}[i)]
\item if $b<2a$ and $c\geq 890b^4$ or
\item if $2a\leq b\leq 12 a$ and $c\geq 1613b^4$ or
\item if $b>12a$ and $c\geq 39247 b^4$ 
\end{enumerate}
then we must have $d=d_+$.
\end{theorem}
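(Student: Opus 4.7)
The plan is to assume that $\{a,b,c,d\}$ is an irregular $D(4)$-quadruple with $a<b<c<d$ in which $d \neq d_+$; since $d_- < c$, this forces $d > d_+$, and the goal is to derive a contradiction when $c$ exceeds the stated threshold. The strategy is the standard one for such Diophantine problems: combine a congruence-based lower bound on a recurrence index with a Baker--Matveev upper bound coming from a small linear form in three logarithms.

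First, using the Pellian system, the existence of $d$ is equivalent to $z = v_m = w_n$ for some $m,n \geq 2$. By Theorem \ref{teorem_izbacen_slucaj} together with the assumption $d > d_+$, one may restrict to three sub-cases for $(z_0, z_1)$ and the parities of $m,n$. In each sub-case the recurrences diagonalize as
$$v_m = \tfrac{1}{2}\bigl(\xi \alpha^m + \bar\xi\, \alpha^{-m}\bigr), \qquad w_n = \tfrac{1}{2}\bigl(\eta \beta^n + \bar\eta\, \beta^{-n}\bigr),$$
where $\alpha = (s+\sqrt{ac+4})/2$, $\beta = (t+\sqrt{bc+4})/2$, and $\xi, \eta$ are explicit algebraic numbers built from the fundamental solutions. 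From $v_m \geq \tfrac12 \alpha^m$ one obtains a useful lower bound on $m$ in terms of $d$, and hence indirectly in terms of $c$.

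Second, a congruence argument of Dujella--Peth\H{o} type, applied modulo suitable powers of $c$ to the equality $v_m = w_n$, forces $m$ and $n$ to be linked by $m \equiv \pm n \pmod{c}$ and then to grow at least like a power of $c$; the improved numerical lower bound on $b$ in the irregular case from \cite{nas2} is what keeps the constants here manageable. Third, the same equality produces the nonzero linear form
$$\Lambda = m \log \alpha - n \log \beta + \log\!\Bigl(\tfrac{\xi\sqrt{b}}{\eta\sqrt{a}}\Bigr),$$
which admits the easy upper bound $|\Lambda| < C_1 \alpha^{-2m}$, while Matveev's theorem on linear forms in three logarithms yields a matching lower bound of the form $\log |\Lambda| > -C_2\, \log(am)\, \log(ac)\, \log(bc)$ with explicit absolute $C_2$. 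Comparing the two gives an upper bound on $m$ that is roughly logarithmic in $c$, which, combined with the lower bound from the congruence step, produces an inequality of the shape $c < F(a,b)$.

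Finally, the three ranges $b<2a$, $2a \leq b \leq 12a$, $b > 12a$ appear because the optimization of Matveev's constants, the bounds on the heights $h(\alpha)$, $h(\beta)$, $h(\xi\sqrt{b}/\eta\sqrt{a})$, and the elimination of $a$ from the final inequality are carried out differently in each regime; this is what produces the explicit thresholds $890$, $1613$, $39247$. The main obstacle is the precise numerical bookkeeping through Matveev's three-log bound in each of the three surviving parity sub-cases from Theorem \ref{teorem_izbacen_slucaj}, since the $D(4)$ setting is known (cf.\ the remarks preceding Theorem \ref{teorem1.5}) to give slightly worse raw constants than the $D(1)$ analogue in \cite{fm}; the improved lower bound on $b$ from \cite{nas2} and sharp height estimates for $\xi,\eta$ are essential to recover comparably clean thresholds.
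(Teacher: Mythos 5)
Your overall architecture (assume $d>d_+$, restrict the fundamental solutions via Theorem \ref{teorem_izbacen_slucaj}, get a power-of-$c$ lower bound on the index from a congruence/gap argument, then beat it with an upper bound) matches the paper, and the first half is essentially right: the paper's Lemma \ref{lema_donja_m} gives $n>0.5348\,b^{-3/4}c^{1/4}$ for $c>b^4$ by exactly the kind of congruence argument you describe. The problem is the second half. The paper does \emph{not} use Matveev's theorem here; it uses the Rickert-type simultaneous Pad\'e approximation results (Lemmas \ref{lema_rickert} and \ref{lema_rickert2}), which bound $n$ by a ratio of products of logarithms of quantities polynomial in $b$ and $c$ — numerically a few dozen when $c\sim 10^3 b^4$ and $b\sim 10^5$. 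Matveev is used in this paper only for the counting result (Theorem \ref{teorem_prebrojavanje}), where one is comparing two solutions in the same class and a gap of size $(ac)^{m_0}$ is available.

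This substitution is not a cosmetic difference: your route cannot produce the stated thresholds. Matveev's bound, as instantiated in Proposition \ref{prop_matveev}, gives $m/\log(38.92(m+1))<2.78\cdot 10^{12}\log\eta\log c$, i.e.\ $m\lesssim 10^{13}(\log c)^2\log m$. At the threshold $c=890b^4$ with $b=10^5$ one has $c\approx 9\cdot 10^{22}$, so this upper bound is on the order of $10^{17}$, while the gap-principle lower bound $0.5348\,b^{-3/4}c^{1/4}$ is only about $50$. There is no contradiction; comparing $c^{1/4}b^{-3/4}$ against $10^{12}(\log c)^2$ would force a threshold more like $c>10^{60}b^4$ than $890b^4$. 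The constants $890$, $1613$, $39247$ arise precisely because the Rickert-type bound, at $c=\kappa b^4$, has a denominator factor $\log(\mathrm{const}\cdot b^{-4}c)=\log(\mathrm{const}\cdot\kappa)$ that must be positive and large enough — this is where the case split on $a'=\max\{4a,4(b-a)\}$ (hence on the ratio $b/a$) enters, not from optimizing Matveev heights. To repair the proof you need to replace the three-logarithm step by the hypergeometric method, verifying its applicability condition $c>308.07a'b(b-a)^2a^{-1}$ (or $c>59.488\,a'b(b-a)^2a^{-1}$ for Lemma \ref{lema_rickert2}) in each of the three ranges, and check that the resulting right-hand sides are decreasing in $c$ so that substituting $c=890b^4$, $1613b^4$, $52761b^4$ yields $b<10^5$, contradicting Lemma \ref{b10na5}.
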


\begin{theorem}\label{teorem1.6}
Let $\{a,b,c,d\}$ be a $D(4)$-quadruple and $a<b<c<d_+<d$. Then any $D(4)$-quadruple $\{e,a,b,c\}$ with $e<c$ must be regular.
\end{theorem}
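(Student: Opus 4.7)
The plan is to realize each admissible $e<c$ as a small-index common term of the recurrences $(v_m),(w_n)$ attached to $\{a,b,c\}$, and then to carry out a short case analysis. Any $D(4)$-extension of $\{a,b,c\}$ produces $z=v_{m'}=w_{n'}$ with $e=(z^2-4)/c$. A size estimate using the recurrences $v_{m+2}=sv_{m+1}-v_m$ and $w_{n+2}=tw_{n+1}-w_n$, applied to the explicit initial terms provided by Theorem~\ref{teorem_izbacen_slucaj}, shows that both $v_2$ and $w_2$ already exceed $\sqrt{c^2+4}$, so the condition $e<c$ forces $(m',n')\in\{(0,0),(1,0),(0,1),(1,1)\}$.

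The hypothesis $d>d_+$ combined with Theorem~\ref{teorem_izbacen_slucaj} excludes case $(ii)$ for the index pair giving $d$ and thereby pins down $(|z_0|,|z_1|)$ and the sign of $z_0z_1$ inside one of cases $(i)$, $(iii)$, or $(iv)$. Each of the four candidate pairs $(m',n')$ itself lies in one parity case of that theorem and imposes its own restrictions on $(z_0,z_1)$. Comparing the two sets of constraints shows that the parity case of $(m',n')$ must agree with the case of $d$: every other combination either inverts the sign of $z_0z_1$ or forces an algebraic coincidence such as $s=\tfrac{1}{2}(cr-st)$ or $t=\tfrac{1}{2}(cr-st)$, and a direct expansion using $r^2=ab+4$, $s^2=ac+4$, $t^2=bc+4$ then collapses $e$ to $0$, $a$, or $b$, violating distinctness.

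In each surviving scenario a direct computation identifies $e$ with $d_-(a,b,c)$. In case $(i)$ with $(m',n')=(0,0)$, $z=z_0$, and either $|z_0|=2$ (giving $e=0$, excluded) or $|z_0|=\tfrac{1}{2}(cr-st)$; expanding $(cr-st)^2-16$ in the latter yields $e=a+b+c+\tfrac{1}{2}(abc-rst)=d_-$. In case $(iii)$ with $(m',n')=(0,1)$, $z=z_0$ with $|z_0|=\tfrac{1}{2}(cr-st)$ again gives $e=d_-$. In case $(iv)$ with $(m',n')=(1,1)$, the first two Pell equations force $x_0,y_1\in\{\pm r\}$, and enforcing $v_1=w_1$ yields $z=\tfrac{1}{2}(cr+st)$ (producing $e=d_+>c$, excluded) or $z=\tfrac{1}{2}(cr-st)$ (producing $e=d_-$). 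Thus $e=d_-(a,b,c)$ in every admissible scenario, and $\{e,a,b,c\}=\{a,b,c,d_-\}$ is regular by the definition given in the introduction.

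The main technical obstacle is the parity-compatibility step: tracking the sign choices of $(z_0,z_1,x_0,y_1)$ simultaneously under the constraints from the case of $d$ and from the parity of $(m',n')$, and verifying that every forced algebraic coincidence would lead to an identity like $d_-=a$, which is incompatible with $\{e,a,b,c\}$ being a valid $D(4)$-quadruple with distinct elements. The size bound $v_2,w_2>\sqrt{c^2+4}$ also has to be checked subcase by subcase using the explicit initial terms of Theorem~\ref{teorem_izbacen_slucaj}, but this is essentially a direct computation.
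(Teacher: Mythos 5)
There is a genuine gap, and it sits exactly where your argument would otherwise prove too much. Apart from excluding case (ii) for the index pair of $d$, you never use the hypothesis $d>d_+$: your classification of $e$ depends only on the triple $\{a,b,c\}$. If it were valid, it would show that every extension $e<c$ of every $D(4)$-triple equals $d_-(a,b,c)$, which (applied to the three largest elements of an arbitrary quadruple) is the full regularity conjecture — still open. The flaw is the step ``each candidate pair $(m',n')$ lies in one parity case of Theorem~\ref{teorem_izbacen_slucaj} and imposes its own restrictions on $(z_0,z_1)$''. That theorem (like the underlying \cite[Lemma 9]{fil_xy4}) classifies the fundamental solution of the class containing the solution $z=\sqrt{cd+4}$ of an extension $d>c$, and the elimination of the residual possibility in its case (i) — namely $z_0=z_1$ with $|z_0|\notin\{2,\frac{1}{2}(cr-st)\}$ and $d_0=(z_0^2-4)/c<c$ giving an \emph{irregular} quadruple $\{a,b,d_0,c\}$ — is carried out in the proof only for that class, and only by exploiting $d>d_+$. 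Your $z_e=\sqrt{ce+4}$ sits at index $(0,0)$ of its own class, which need not coincide with the class of $d$ (so the ``parity compatibility with the case of $d$'' step is also unfounded, since distinct extensions may belong to distinct classes of solutions); and for $e$'s class the dangerous alternative is precisely that $z_e$ itself is such a ``bad'' fundamental solution, i.e.\ that $\{e,a,b,c\}$ is irregular. Invoking the clean dichotomy $|z_0|\in\{2,\frac{1}{2}(cr-st)\}$ for that class is therefore assuming the conclusion.

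The paper's route shows where the two hypotheses must interact: assuming $\{e,a,b,c\}$ irregular, Lemma~\ref{lema_prethodi_tm16} applied to that quadruple (with $c$ as its largest element) forces $c>52761\,b^{4}$, and then Theorem~\ref{teorem1.5} forces $d=d_+$ for any extension of $\{a,b,c\}$, contradicting $d>d_+$. Your computations identifying the small-index solutions with $0$, $d_-$ and $d_+$ are correct as far as they go — they essentially reproduce Lemma~\ref{regularna_rjesenja} — but they cannot by themselves exclude an irregular $e$.
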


For a fixed $D(4)$-triple $\{a,b,c\}$, denote by $N$ the number of positive integers $d>d_+$ such that $\{a,b,c,d\}$ is a $D(4)$-quadruple. The next theorem is proven as in \cite{cfm}, and similar methods yielded analogous results.

\begin{theorem}\label{teorem_prebrojavanje}
Let $\{a,b,c\}$ be a $D(4)$-triple with $a<b<c$.
\begin{enumerate}[i)]
\item If $c=a+b+2r$, then $N\leq 3$.
\item If $a+b+2r\neq c<b^2$, then $N\leq 7$.
\item If $b^2<c<39247b^4$, then $N\leq 6$.
\item If $c\geq 39247b^4$, then $N=0$.
\end{enumerate}
\end{theorem}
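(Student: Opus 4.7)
The plan is to combine Theorem \ref{teorem_izbacen_slucaj} and Theorem \ref{teorem1.5} with a case-by-case count of coincidences $v_m=w_n$, following the strategy of \cite{cfm}. Part (iv) is immediate: of the three thresholds $890b^4$, $1613b^4$, $39247b^4$ in Theorem \ref{teorem1.5}, the last is the largest, so $c\geq 39247b^4$ forces $d=d_+$ in every regime of $b/a$; hence $N=0$.

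For parts (i)--(iii), fix any extension $d>d_+$. By the ``moreover'' clause of Theorem \ref{teorem_izbacen_slucaj}, case (ii) of that theorem cannot occur. The extension $d_+$ itself falls under case (i) with $z_0=z_1=\pm 2$, so any $d>d_+$ landing in case (i) of Theorem \ref{teorem_izbacen_slucaj} must satisfy $|z_0|=|z_1|=\frac{1}{2}(cr-st)$. Thus every such $d$ yields one of three types of coincidence: type (A) from case (i) with $|z_0|=\frac{1}{2}(cr-st)$, type (B) from case (iii), or type (C) from case (iv). For each type, the signs of $z_0,z_1$ select a fundamental solution, and a per-type counting lemma in the style of \cite{cfm} will bound the number of index pairs $(m,n)$ producing a $d>d_+$ by at most one per sign pattern, using the exponential growth of the sequences $v_m,w_n$, congruences modulo $c$, and the explicit shape of the initial terms from Theorem \ref{teorem_izbacen_slucaj}.

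For the regular triple ($c=a+b+2r$) a direct computation gives $s=a+r$ and $t=b+r$, whence $cr-st=r^2-ab=4$ and thus $\tfrac{1}{2}(cr-st)=2$. Consequently types (A) and (B) collapse into the excluded $|z_0|=2$ case, whose only extensions are $d_+$ and (when positive) $d_-<c$; neither exceeds $d_+$. Only type (C) then contributes, and tallying the admissible sign pairs $(z_0,z_1)$ yields $N\leq 3$. For non-regular triples all three types may contribute: a count of sign choices gives $N\leq 7$ when $c<b^2$, and when $c>b^2$ one further sign-configuration is ruled out by sharper size estimates on $|z_0|$ and $|z_1|$, improving the bound to $N\leq 6$. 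The principal obstacle is the per-type counting lemma together with the sign-elimination step that upgrades $N\leq 7$ to $N\leq 6$ for $c>b^2$; both steps rely on the congruence and linear-forms-in-logarithms machinery of \cite{fm,cfm} carefully calibrated to the $D(4)$ setting, using the quantitative bound of Theorem \ref{teorem1.5} to terminate the linear-forms estimates.
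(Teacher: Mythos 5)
Your part (iv) is fine and matches the paper. But the core of parts (i)--(iii) as you describe it does not work, and in the regular case your analysis is inverted. First, the claim that the class $|z_0|=|z_1|=2$ contributes only $d_+$ and $d_-$ is precisely the regularity conjecture for that class of solutions; it cannot be assumed. Theorem \ref{teorem_izbacen_slucaj}(i) explicitly allows $|z_0|=2$ with $d>d_+$, and the paper's entire effort in case (i) of the theorem is to bound $N(2,2)+N(-2,-2)$. Second, for $c=a+b+2r$ one has $c<4b$, so by Lemma \ref{lema_tau} the cases $|z_0|=t$ (which forces $c>ab^2$) and $|z_1|=s$ (which forces $c>a^2b$) are the ones that are excluded, while $\frac{1}{2}(cr-st)=2$ collapses everything into the $|z_0|=|z_1|=2$ class --- the opposite of what you assert. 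Your ``only type (C) contributes'' would in fact give $N\leq 2$ (two sign patterns with $z_0z_1>0$ at one apiece), not $3$, which already signals the count is not coming from the right place.

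The deeper gap is the ``per-type counting lemma ... at most one per sign pattern.'' No such bound is available in general; what can be proved (Proposition \ref{prop_okazaki_rjesenja}) is at most \emph{two} irregular solutions per fundamental solution, obtained by combining Okazaki's gap principle (Lemma \ref{okazaki}: three solutions in one class force $m_2-m_1>\kappa^{-1}(ac)^{m_0-\delta}\Delta\log\eta$, hence $m_0\leq 2$) with Matveev's three-logarithm bound. The improvement to one solution for specific classes such as $(2,2)$ and $(t,s)$ (the latter only when $c>b^2$) requires a second, independent gap principle relating $n_1$ and $n_2$ via Pad\'e approximation (Propositions \ref{padeova_tm8.1} and \ref{prop_odnos_n1_n2}) played against Laurent's two-logarithm bound applied to $\Lambda_2-\Lambda_1$, together with the shifting Lemma \ref{lema_nizovi} and Proposition \ref{prop_m_n_9} to reduce all sign/initial-term configurations to $(\pm t,\pm s)$ with $\min\{m,n\}\geq 9$. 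The final tallies $3=1+2$ and $7$ versus $6$ arise from summing these per-class bounds (two classes in the regular case; the classes $(\pm2,\pm2)$ and the shifted $(\pm t,\pm s)$ classes otherwise), not from a sign-pattern count. As written, your proposal defers exactly the steps that constitute the proof.
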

This theorem implies the next corollary.
\begin{kor}
Any $D(4)$-triple can be extended to a $D(4)$-quadruple with $d>\max\{a,b,c\}$  in at most $8$ ways. A regular $D(4)$-triple $\{a,b,c\}$ can be extended to a $D(4)$-quadruple with $d>\max\{a,b,c\}$ in at most $4$ ways. 
\end{kor}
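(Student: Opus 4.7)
My plan is to deduce this corollary directly from Theorem \ref{teorem_prebrojavanje}, combined with two structural facts about the possible values of $d$. First, by the very definition of $d_+$ given in the introduction, $d_+ > c = \max\{a,b,c\}$ and $\{a,b,c,d_+\}$ is always a $D(4)$-quadruple, so $d_+$ contributes exactly one extension with the required property. Second, there is no $D(4)$-quadruple $\{a,b,c,d\}$ with $c < d < d_+$; this standard ``gap'' fact follows from the classification of fundamental solutions to the Pellian system \eqref{prva_pelova_s_a}--\eqref{druga_pelova_s_b} obtained by Filipin in \cite{fil_xy4} and refined in Theorem \ref{teorem_izbacen_slucaj} above, because the smallest value of $z = v_m = w_n$ produced by those fundamental solutions is precisely the one corresponding to $d = d_+$. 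Consequently, the total number of extensions of $\{a,b,c\}$ with $d > c$ equals $1 + N$, where $N$ is as defined just before Theorem \ref{teorem_prebrojavanje}.

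I would then split into cases according to which part of Theorem \ref{teorem_prebrojavanje} governs the triple. If $\{a,b,c\}$ is regular, i.e.\ $c = a + b + 2r$, then part (i) gives $N \leq 3$, so the triple admits at most $1 + 3 = 4$ extensions; this is the second assertion of the corollary. Otherwise, exactly one of parts (ii), (iii), (iv) applies according to whether $c < b^2$, $b^2 < c < 39247 b^4$, or $c \geq 39247 b^4$, yielding $N \leq 7$, $N \leq 6$, or $N = 0$, respectively. In every case $1 + N \leq 8$, which is the first assertion.

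No substantial obstacle remains, because all the hard analytic, Diophantine, and reduction content has already been packaged into Theorem \ref{teorem_prebrojavanje}. The only point one needs to handle with some care is the second structural fact above: it is exactly this observation that lets us enumerate every admissible $d > c$ as either $d_+$ itself or one of the at most $N$ values counted in the theorem, so that no extension lying in the interval $(c, d_+)$ is overlooked and the additive ``$+1$'' accounting for $d_+$ is legitimate.
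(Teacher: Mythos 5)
Your argument is correct and is precisely the reasoning the paper leaves implicit when it asserts that Theorem \ref{teorem_prebrojavanje} ``implies'' the corollary: the number of extensions with $d>\max\{a,b,c\}$ is $1+N$, the ``$+1$'' being $d_+$, once one knows the gap fact that $d>c$ forces $d\geq d_+$, which is exactly what the classification of fundamental solutions (Lemma \ref{regularna_rjesenja}, Theorem \ref{teorem_izbacen_slucaj}) delivers. The only blemish is that the trichotomy you quote omits $c=b^2$, but that omission is already present in the statement of Theorem \ref{teorem_prebrojavanje} itself and is not introduced by your proof.
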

 
 We remark that we can apply the previous results on a family of triples $c$ which arises from the Pellian equation (\ref{par_trojka}) for the fundamental solutions $(t_0,s_0)=(\pm 2,2)$  which gives us a slightly better estimate on a number of extensions when $b<6.85a$. 
 
\begin{proposition}\label{prop_covi}
  Let $\{a,b\}$ be a $D(4)$-pair with $a<b$. Let $c=c^{\pm}_{\nu}$ be given by
  $$
\resizebox{\textwidth}{!}{$c=c_{\nu}^{\pm}=\frac{4}{ab}\left\{\left(\frac{\sqrt{b}\pm\sqrt{a}}{2}\right)^2\left(\frac{r+\sqrt{ab}}{2}\right)^{2\nu}+\left(\frac{\sqrt{b}\mp\sqrt{a}}{2}\right)^2\left(\frac{r-\sqrt{ab}}{2}\right)^{2\nu}-\frac{a+b}{2}\right\}$}
$$
where $\nu\in\mathbb{N}$. 
\begin{enumerate}[i)]
\item If $c=c_1^{+}$ or $c=c_1^-$, then $N\leq 3$.
\item If $c_2^+\leq c\leq c_4^+$ then $N\leq 6$.
\item If $c=c_2^-$ and $a\geq 2$ then $N\leq 6$ and if $a=1$ then $N\leq 7$.
\item If $c\geq c_5^-$ or $c\geq c_4^-$ and $a\geq 35$ then $N=0$. 
\end{enumerate}
  \end{proposition}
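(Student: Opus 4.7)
The plan is a case analysis: for each of the four claims in the proposition I compare the explicit value of $c=c_\nu^\tau$ to the thresholds in Theorem~\ref{teorem_prebrojavanje} (and in Theorem~\ref{teorem1.5} when sharper bounds are needed). Before starting the cases I derive workable closed forms for $c_\nu^\tau$. Setting $X:=ab+2$, the numbers $\alpha^2=\tfrac{1}{2}(X+r\sqrt{ab})$ and $\beta^2=\tfrac{1}{2}(X-r\sqrt{ab})$ are roots of $z^2-Xz+1=0$, so the sequences $u_\nu:=\alpha^{2\nu}+\beta^{2\nu}$ and $W_\nu:=(\alpha^{2\nu}-\beta^{2\nu})/(\alpha^2-\beta^2)$ both satisfy $y_\nu=Xy_{\nu-1}-y_{\nu-2}$. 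Expanding the defining formula yields
$$
c_\nu^\tau=\frac{(a+b)(u_\nu-2)}{ab}+2\tau r\,W_\nu,
$$
which evaluates for $\nu\in\{1,2,3,4,5\}$ to explicit polynomials in $a,b,r$; in particular $c_1^\tau=a+b+2\tau r$ and $c_2^\tau=(a+b)(ab+4)+2\tau r(ab+2)$.

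For case~(i), $c_1^+=a+b+2r$ is the regular extension, so Theorem~\ref{teorem_prebrojavanje}(i) gives $N\leq 3$; if $c_1^-=a+b-2r>0$, the identity $a c_1^-+4=(r-a)^2$ shows that after relabeling $\{a,c_1^-,b\}$ is again a regular triple, and Theorem~\ref{teorem_prebrojavanje}(i) still applies. For case~(ii) I verify $c_2^+>b^2$ using $(a+b)(ab+4)\geq ab^2\geq b^2$; any $c\in[c_2^+,c_4^+]$ then exceeds $b^2$, and whether $c<39247b^4$ or $c\geq 39247b^4$, Theorem~\ref{teorem_prebrojavanje}(iii) or~(iv) yields $N\leq 6$. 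For case~(iii) I split on $a$: for $a\geq 2$ an algebraic estimate on $(a+b)(ab+4)-2r(ab+2)$ shows $c_2^->b^2$ and Theorem~\ref{teorem_prebrojavanje}(iii) applies; for $a=1$ one instead has $c_2^-<b^2$, and since $c_2^-\neq c_1^+$ the triple is non-regular, so Theorem~\ref{teorem_prebrojavanje}(ii) applies, giving the weaker $N\leq 7$.

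Case~(iv) is where the main work lies. To deduce $N=0$ I compare $c_\nu^-$ against the appropriate threshold in Theorem~\ref{teorem1.5} in each of the three subregimes $b<2a$, $2a\leq b\leq 12a$, and $b>12a$. The basic tool is the estimate $ab<\alpha^2<ab+2$ together with the decay $\beta^{2\nu}<\alpha^{-2\nu}$, which gives $c_\nu^-\gtrsim(\sqrt{b}-\sqrt{a})^2(ab)^{\nu-1}$ with controlled lower-order corrections. For $\nu=5$ the resulting lower bound exceeds $890b^4$, $1613b^4$, or $39247b^4$ across the three regimes, triggering Theorem~\ref{teorem1.5} and forcing $d=d_+$, hence $N=0$; for $\nu=4$ the analogous comparison closes only under the stronger hypothesis $a\geq 35$, which is exactly enough to absorb the constant $39247$ in the worst subcase $b>12a$. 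The main obstacle is managing this case uniformly: the generic estimate is tight in the worst regimes, and a handful of small sporadic pairs $\{a,b\}$ at the boundary of the three regimes will require either direct numerical verification or use of the finer recurrence structure of the Pellian extensions (via the index $\nu$) rather than a bare size comparison.
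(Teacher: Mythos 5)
Your proposal is correct and follows essentially the same route as the paper: write out $c_\nu^\tau$ explicitly, reduce (i)--(iii) to the thresholds $c=a+b+2r$ and $c\gtrless b^2$ in Theorem~\ref{teorem_prebrojavanje}, and reduce (iv) to the $O(b^4)$ thresholds (the paper works mostly with the single bound $39247b^4$ of Theorem~\ref{teorem_prebrojavanje}(iv) plus ad hoc splits on $b/a$, you with the three regimes of Theorem~\ref{teorem1.5}, but these are the same tool). The ``sporadic boundary pairs'' you worry about in (iv) do not in fact require separate numerical work: since $N=0$ automatically for $b\leq 10^5$ by Lemma~\ref{b10na5}, one may assume $b>10^5$ throughout, and then $a+b-2r>(1-\sqrt{a/b})^2\,b-4/\sqrt{ab}$ already makes the bare size comparison close for all $a\geq 35$ (e.g.\ $a=35$, $b=10^5$ gives $c_4^->0.96\cdot 35^3 b^4>39247b^4$), which is exactly how the paper disposes of the small-$a$ cases.
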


 \begin{kor}\label{kor2}
  Let $\{a,b,c\}$ be a $D(4)$-triple. If $a<b\leq6.85a$ then $N\leq 6$.
  \end{kor}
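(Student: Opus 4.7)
The plan is to invoke Proposition \ref{prop_covi}, which provides a bound on $N$ for each possible value $c=c_\nu^\tau$. Working through its four clauses and taking the best applicable bound for each admissible $c$: clause i) gives $N\leq 3$ when $c=c_1^+$; clause ii) gives $N\leq 6$ for every $c\in[c_2^+,c_4^+]$; clause iv) gives $N=0$ when $c\geq c_5^-$. Since $c_1^-=a+b-2r<b$ always (because $a<b$ forces $a^2<4(ab+4)$), the value $c_1^-$ never serves as the largest element of a triple. The only case that does not already yield $N\leq 6$ is clause iii) with $c=c_2^-$ and $a=1$, where only $N\leq 7$ is asserted.

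The next step is to eliminate that residual case using the hypothesis $b\leq 6.85a$. When $a=1$ the hypothesis restricts $b$ to integers at most $6$, and the only such $b$ with $b+4$ a perfect square is $b=5$, so the only $D(4)$-pair in scope with $a=1$ is $\{1,5\}$. For this pair I would substitute $r=3$ and $(r\pm\sqrt{ab})/2=(3\pm\sqrt{5})/2$ into the formula for $c_\nu^\tau$ and check the coincidence $c_2^-=12=c_1^+$. Consequently the would-be exceptional case is actually the regular extension, is covered by clause i), and yields $N\leq 3\leq 6$. This coincidence is an instance of a general phenomenon: for any pair with $b-a=4$ one has $r=a+2$, the weights $((\sqrt{b}\pm\sqrt{a})/2)^2$ swap roles in the formula, and the identity $\alpha^3+\beta^3=r^3-3r$ (with $\alpha\beta=1$) collapses $c_2^-$ down to $4a+8=c_1^+$.

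The main obstacle is bookkeeping rather than substance: one must confirm that the four clauses of Proposition \ref{prop_covi} really cover every admissible $c=c_\nu^\tau$, accounting for possible coincidences among these parametric values. The expected ordering $c_1^+\leq c_2^-\leq c_2^+\leq c_3^-\leq c_3^+\leq c_4^-\leq c_4^+\leq c_5^-\leq\cdots$ among admissible values (with equalities permitted, as in the $\{1,5\}$ case) is what one should verify, because then any admissible $c$ falls into one of the four clauses. Granted this point, the corollary follows by a direct case split with the $a=1$ branch resolved via the $\{1,5\}$ coincidence above.
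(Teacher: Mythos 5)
Your proposal has a genuine gap: Proposition \ref{prop_covi} only bounds $N$ when $c$ happens to equal one of the parametric values $c_\nu^\tau$, and you never justify that every third element $c$ of a $D(4)$-triple containing the pair $\{a,b\}$ must be of this form. That is precisely the content of Lemma \ref{lema_b685a}, and it is where the hypothesis $b\leq 6.85a$ does its real work (via a descent on the Pell equation (\ref{par_trojka}) showing that, under this hypothesis, $(\pm2,2)$ are the only fundamental solutions). It is not a bookkeeping matter, and it is false without the hypothesis: the remark following that lemma exhibits the pair $(4620,31680)$ with an extension $c=146\,434\,197\neq c_\nu^{\pm}$. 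So the step you label as ``the main obstacle'' (checking that the four clauses of Proposition \ref{prop_covi} cover every $c_\nu^\tau$ in the correct order) is the easy part; the substantive missing step is reducing an arbitrary admissible $c$ to the parametric family in the first place.

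That said, your treatment of the residual case $c=c_2^-$ with $a=1$ is correct and genuinely different from the paper's. The paper instead notes that $N\geq 1$ forces $b>10^5$ (Lemma \ref{b10na5}), so $b\leq 6.85a$ gives $a\geq 14599$ and the case $a=1$ simply never occurs. Your route --- $a=1$ and $b\leq 6.85$ force $b=5$; for the pair $\{1,5\}$ one has $a+b-2r=0$, hence $c_2^-=4r=12=c_1^+$, which falls under clause i) --- is an elementary and valid alternative, and your general observation that $b-a=4$ collapses $c_2^-$ onto $c_1^+$ checks out. But this does not repair the main gap above: without Lemma \ref{lema_b685a} the argument says nothing about a triple whose $c$ lies outside the list $\{c_\nu^\tau\}$.
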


\section{Preliminary results about elements of a $D(4)$-$m$-tuple}

We begin this section by listing some known results.

\begin{lemma}\label{c_granice}
Let $\{a,b,c\}$ be a $D(4)$-triple and $a<b<c$. Then $c=a+b+2r$ or $c>\max\{ab+a+b,4b\}$.
\end{lemma}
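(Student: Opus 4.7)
The plan is to analyze the positive integers $c$ extending $\{a,b\}$ to a $D(4)$-triple by classifying the solutions $(s,t)$ of the Pellian equation
\begin{equation*}
b s^2 - a t^2 = 4(b-a),
\end{equation*}
where $s,t$ satisfy $ac+4=s^2$, $bc+4=t^2$, so that $c=(s^2-4)/a$. Solutions partition into orbits under the fundamental unit $\varepsilon=(r+\sqrt{ab})/2$ of $\mathbb{Z}[\sqrt{ab}]$, which has norm $1$ since $r^2-ab=4$.

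First, I would verify that the only fundamental solutions, up to signs, are $(s,t)=(2,2)$ and $(s,t)=(2,-2)$, both corresponding to $c=0$. The standard size bounds on fundamental solutions of Pell-like equations restrict attention to a bounded range of $(s,t)$, and an explicit check rules out any other possibilities. This produces at most two $\varepsilon$-orbits $\mathcal{O}_1$ (of $(2,2)$) and $\mathcal{O}_2$ (of $(2,-2)$); they coincide when $b=a+4$. I expect this step to be the main technical obstacle, though it is analogous to the classification in \cite[Lemma~9]{fil_xy4}.

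Second, using $\varepsilon$ and $r^2=ab+4$, I would read off the $c$-values:
\begin{align*}
\mathcal{O}_1 &\colon\quad 0,\ a+b+2r,\ r(r+a)(r+b),\ \ldots, \\
\mathcal{O}_2 &\colon\quad 0,\ a+b-2r,\ r(b-r)(r-a),\ \ldots,
\end{align*}
both satisfying $c_{k+1}=(ab+2)c_k-c_{k-1}+2(a+b)$. The hypothesis $a<b$ implies $a<2r$, so $a+b-2r<b$ is excluded by $c>b$. Hence if $c\neq a+b+2r$ then $c\geq c^{(1)}:=r(r+a)(r+b)$ or, when $\mathcal{O}_2$ is nontrivial (equivalently $b\geq a+5$), $c\geq c^{(2)}:=r(b-r)(r-a)$.

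Finally, I would verify both lower bounds exceed $\max\{ab+a+b,4b\}$. The bound for $c^{(1)}$ is immediate from $c^{(1)}>r^2(a+b)=(ab+4)(a+b)$, which plainly exceeds both quantities. For $c^{(2)}$, the identity
\begin{equation*}
c^{(2)}-(a+b+2r)=(ab+3)(a+b-2r),
\end{equation*}
derived from the recurrence, combined with $a+b-2r\geq 1$, gives $c^{(2)}>ab+a+b$ at once. For $c^{(2)}>4b$ the weak bound $c^{(2)}\geq ab+a+b+2r+3$ suffices when $a\geq 3$; for $a\in\{1,2\}$ one invokes the explicit parametrizations of valid pairs (namely $b=n^2-4$ for $a=1$ and $b=2(m^2-1)$ for $a=2$) together with $(a+b-2r)(a+b+2r)=(b-a)^2-16$ to show $a+b-2r$ grows with $b$, and a short case check completes the argument.
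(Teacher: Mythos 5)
Your argument breaks down at its very first step, and the gap is fatal rather than cosmetic. The claim that the only fundamental solutions of $bs^2-at^2=4(b-a)$ are $(s,t)=(2,\pm2)$ is false in general: the standard size bounds confine fundamental solutions to a range depending on $a$ and $b$, but extra classes genuinely occur. The Remark following Lemma \ref{lema_b685a} of this paper exhibits the pair $(a,b)=(4620,31680)$ with the additional fundamental solution $(s_0,t_0)=(68,178)$, giving an extension $c=146\,434\,197\neq c_{\nu}^{\pm}$; indeed Lemma \ref{lema_b685a} (improving \cite[Lemma 1]{bf_parovi}) establishes $c=c_{\nu}^{\pm}$ only under the hypothesis $b<6.85a$, and the remark notes this is sharp. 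So your orbit analysis covers exactly the two orbits of $(2,\pm2)$ and silently omits every extension $c$ coming from another class; the lemma is still true for those $c$, but your proof says nothing about them. (Note also that the paper itself does not prove this lemma directly; it cites \cite[Lemma 3]{fil_xy4} and \cite[Lemma 1]{duj}.)

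The argument behind the cited results avoids the classification problem entirely by descending via $d_-$ rather than via Pell orbits: for a $D(4)$-triple $\{a,b,c\}$ with $a<b<c$ put $e=d_-(a,b,c)=a+b+c+\frac{1}{2}(abc-rst)$. One checks that $e$ is a nonnegative integer with $e<c$ and that $c=d_+(a,b,e)$. If $e=0$ this forces $c=a+b+2r$; if $e\ge1$ then
$c=a+b+e+\frac{1}{2}\bigl(abe+\sqrt{(ab+4)(ae+4)(be+4)}\bigr)>ab+a+b$,
and a slightly finer estimate (this is where \cite[Lemma 1]{duj} comes in) yields $c>4b$ as well. This is uniform over all solution classes. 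Your computations inside the two orbits of $(2,\pm2)$ — the recurrence $c_{k+1}=(ab+2)c_k-c_{k-1}+2(a+b)$, the identity for $c^{(2)}-(a+b+2r)$, and the small-$a$ case check — are sound as far as they go, but they cannot be assembled into a proof of the lemma without first resolving the fundamental-solution classification, which is known to fail outside the regime $b<6.85a$.
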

\begin{proof}
 This follows from \cite[Lemma 3]{fil_xy4} and \cite[Lemma 1]{duj}.
\end{proof}

The next lemma can be deduced similarly as \cite[Lemma 2]{petorke}.

\begin{lemma}\label{b10na5}
Let $\{a,b,c,d\}$ be a $D(4)$-quadruple such that $a<b<c<d_+<d$. Then $b>10^5$.
\end{lemma}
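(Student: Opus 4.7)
The plan is to follow closely the strategy of \cite[Lemma 2]{petorke}, transposed to the $D(4)$ situation. Suppose for contradiction that $b\le 10^5$. The hypothesis $d>d_+$ lets us invoke Theorem \ref{teorem_izbacen_slucaj}, which rules out case (ii) and reduces the common value $z=v_m=w_n$ of the two Pellian equations \eqref{prva_pelova_s_a}--\eqref{druga_pelova_s_b} to three explicit initial-term patterns for $(z_0,z_1)$. Lemma \ref{c_granice} supplies, in each of these patterns, the lower bound $c>\max\{ab+a+b,4b\}$ whenever $\{a,b,c\}$ is not a regular triple; the regular case $c=a+b+2r$ can be treated separately via \eqref{par_trojka}, using the classical description of extensions of a $D(4)$-pair.

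To convert this into a numerical bound on $m$ and $n$, I would rewrite the identity $v_m=w_n$ as a linear form in logarithms
\[
\Lambda = m\log\alpha - n\log\beta + \log\gamma, \qquad \alpha=\frac{s+\sqrt{s^2-4}}{2},\ \beta=\frac{t+\sqrt{t^2-4}}{2},
\]
where $\gamma$ is an explicit algebraic number built from the initial terms furnished by Theorem \ref{teorem_izbacen_slucaj}. Comparing the dominant and subdominant roots of the two recurrences gives an upper estimate of the form $|\Lambda|\le C_0\,\alpha^{-2m}$, while Matveev's theorem delivers a lower estimate $|\Lambda|\ge \exp(-C\,h(\gamma)\log a\log b)$. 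Combined with the elementary inequality $m\ge c_2\log c/\log s$ coming from $v_m\sim \alpha^m\cdot z_0/2$ and the size of $z$, this produces an upper bound of the shape $m \le C_1\log a\log b\log c$, and after eliminating $\log c$ yields an inequality bounding $c$ by an explicit polynomial in $a$ and $b$.

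The last step is to eliminate, for each of the finitely many pairs $(a,b)$ with $a<b\le 10^5$ that survive the analytic bound, the remaining possibilities. For every such pair I would list the admissible fundamental solutions $(z_0,z_1)$ supplied by Theorem \ref{teorem_izbacen_slucaj}, discard those corresponding to regular extensions, and apply a Baker--Davenport reduction to the linear form $\Lambda$ to rule out the rest. I expect the principal difficulty to be computational rather than conceptual: the case distinction produced by the four parts of Theorem \ref{teorem_izbacen_slucaj}, together with the three regimes $b<2a$, $2a\le b\le 12a$, $b>12a$ of Theorem \ref{teorem1.5}, yields a large number of subcases, and the preliminary upper bound on $m$ must be sharp enough that the reduction completes in reasonable time for each admissible $(a,b)$. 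Once every pair has been eliminated the desired contradiction $b>10^5$ follows.
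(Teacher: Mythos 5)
Your overall strategy (reduce to the system \eqref{prva_pelova_s_a}--\eqref{druga_pelova_s_b}, bound $m,n$ via a linear form in three logarithms and Matveev/Baker, then run a Baker--Davenport reduction over the finitely many surviving pairs $(a,b)$ with $b\le 10^5$) is exactly the route the paper takes: its proof consists of the single remark that the statement ``is proven similarly'' to \cite[Lemma 2.2]{nas} and \cite[Lemma 3]{bf_parovi} by the reduction method of \cite{dujpet}, plus a 170-hour computation. So the analytic and computational skeleton of your proposal is fine.

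However, there is a genuine logical gap: you invoke Theorem \ref{teorem_izbacen_slucaj} (to restrict the fundamental solutions $(z_0,z_1)$) and Theorem \ref{teorem1.5} (to organize the case split on $c$), but both of these are proved \emph{later in the paper using Lemma \ref{b10na5} itself}. The proof of Theorem \ref{teorem_izbacen_slucaj} uses ``$bc>10^{10}$ from Lemma \ref{b10na5}'', and it rests on Lemma \ref{lema_uvodna_teorem_izbacen} and Proposition \ref{propozicija_d+}, whose arguments repeatedly conclude with ``a contradiction with $b>10^5$''; the same is true of the proof of Theorem \ref{teorem1.5}. Using them here is circular. The correct replacement is the pre-existing Lemma \ref{lemma_fil_pocetni} (Filipin's \cite[Lemma 9]{fil_xy4}), which classifies the initial terms without any lower bound on $b$ --- at the cost of carrying along the extra subcase $z_0<1.608a^{-5/14}c^{9/14}$, which must then be handled in the reduction as well. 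Apart from this (and the attendant need to bound $c$ in terms of $a,b$ by means that do not presuppose $b>10^5$, e.g.\ the gap principle together with the classification of extensions of the pair $\{a,b\}$ as in \cite{bf_parovi}), the plan is sound.
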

\begin{proof}
This result extends the result from \cite[Lemma 2.2]{nas} and \cite[Lemma 3]{bf_parovi}, and is established similarly by using the Baker-Davenport reduction method as described in \cite{dujpet}. For the computation we have used the Mathematica 11.1 software package on a computer with the following specifications; Intel(R) Core(TM) i7-4510U CPU @2.00-3.10 GHz processor. The computation took approximately 170 hours in order to check all of the possibilities.
\end{proof}

From \cite[Theorem 1.1]{fil_par} we have a lower bound on $b$ in the terms of the element $a$.
\begin{lemma}\label{b_a_57sqrt}
If  $\{a,b,c,d\}$ is a $D(4)$-quadruple such that $a<b<c<d_+<d$ then $b\geq a+57\sqrt{a}$.
\end{lemma}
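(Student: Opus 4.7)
The plan is to argue by contradiction: suppose $\{a,b,c,d\}$ is an irregular $D(4)$-quadruple with $d>d_+$ but $b<a+57\sqrt{a}$, and derive an impossibility. The overall strategy is the ``linear forms in logarithms versus congruence/gap principle'' template standard in this area since Dujella.

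First, set up the recurrences. From \eqref{prva_pelova_s_a}--\eqref{druga_pelova_s_b}, any solution corresponding to $d$ satisfies $z=v_m=w_n$. By the initial-term analysis of Filipin (a precursor to Theorem \ref{teorem_izbacen_slucaj}), the fundamental solutions $(z_0,z_1)$ fall into an explicit short list of cases, and the hypothesis $d>d_+$ forces the indices $m,n$ to be at least $2$. Second, reduce the identity $v_m=w_n$ modulo $c$ (and in some subcases modulo a suitable power of $c$). Using $s^2\equiv 4$ and $t^2\equiv 4\pmod c$, the recurrences expand by induction so that $v_m\bmod c$ and $w_n\bmod c$ are explicit quadratic polynomials in $m$ and $n$ with coefficients from the initial data. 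Since $c\geq ab+a+b$ by Lemma \ref{c_granice}, both sides are much smaller than $c$, so the congruence is actually an identity of integers. This yields a tight Diophantine relation forcing $m/n$ close to $\sqrt{b/a}$, which under $b-a<57\sqrt{a}$ is extremely close to $1$.

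Third, apply Matveev's theorem to the linear form
\[\Lambda = m\log\alpha - n\log\beta + \log\gamma,\quad \alpha=\tfrac{s+\sqrt{s^2-4}}{2},\ \beta=\tfrac{t+\sqrt{t^2-4}}{2},\]
extracted from the Binet expressions of $v_m$ and $w_n$. The direct upper bound $\log|\Lambda|<-m\log\alpha+O(1)$, combined with Matveev's lower bound $\log|\Lambda|>-C(\log c)^2\log m$, yields $m=O(\log c)$. Feeding this back into the equality from step two and exploiting $b<a+57\sqrt{a}$ gives a contradiction once $a$ is moderately large. Any residual small values of $a$ are eliminated by Baker--Davenport lattice reduction, analogous to the computation behind Lemma \ref{b10na5}.

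The main obstacle is the calibration of the constant $57$ itself. Because $b-a=O(\sqrt{a})$ is extremely tight, the argument requires sharp constants in Matveev's theorem together with careful bookkeeping across all four subcases of Theorem \ref{teorem_izbacen_slucaj}. The most troublesome subcase is the one with $|z_0|=\tfrac12(cr-st)$, where the initial term is not bounded by an absolute constant, so the congruence reduction has to be carried out with extra care to absorb the size of $z_0$ into the main estimate without spoiling the final numerical threshold.
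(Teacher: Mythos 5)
The paper does not actually prove this lemma: it is imported wholesale from Filipin, \emph{The extension of some $D(4)$-pairs} \cite[Theorem 1.1]{fil_par}, so there is no internal argument to compare against. Judged on its own terms, your proposal is a restatement of the generic Baker-method template rather than a proof, and the single step that produces the statement --- the emergence of the threshold $b\geq a+57\sqrt{a}$ --- is precisely the step you defer (``the main obstacle is the calibration of the constant $57$ itself''). A Matveev bound for a linear form in three logarithms yields $m=O(\log^2 c\cdot\log\log c)$ uniformly in $a$ and $b$; it has no mechanism that distinguishes $b-a<57\sqrt{a}$ from $b-a<10^{6}\sqrt{a}$, so your architecture never engages the hypothesis it is supposed to contradict. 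The $\sqrt{a}$ scale of the cutoff is the fingerprint of the hypergeometric method: a Rickert-type theorem such as Lemma \ref{lema_rickert} applies under a hypothesis of the shape $c>308.07\,a'b(b-a)^2a^{-1}$, which for $b-a=O(\sqrt{a})$ degenerates into a mild condition $c\gg ab$; this is what makes ``close pairs'' tractable and is how results of this exact form are obtained (simultaneous rational approximation plus gap principles, with reduction only for the finitely many leftover parameter ranges). Your sketch never invokes simultaneous approximation at all.

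There are also local errors that would sink the plan even as a sketch. The hypothesis $d>d_+$ forces far more than $m,n\geq 2$ (cf.\ Lemma \ref{donje na m i n}, which gives $m\geq 6$ and $n\geq 7$ in the relevant cases). In your second step, the assertion that because $c\geq ab+a+b$ ``both sides are much smaller than $c$, so the congruence is actually an identity of integers'' is unfounded: the reduced quantities involve $am^2$ and $bn^2$, so one needs a priori upper bounds on $m$ and $n$ relative to $c$ before the congruence can be promoted to an equality; the standard use of this dichotomy is to extract a \emph{lower} bound on $n$ in terms of $c$ (a gap principle), not a relation forcing $m/n\approx\sqrt{b/a}$ --- and even granting $m/n$ close to $1$, that is weaker than the already-known $n-1\leq m\leq 2n+1$ and yields nothing. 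Finally, the Baker--Davenport reduction you invoke for ``residual small $a$'' would have to range over infinitely many pairs $(a,b)$ and infinitely many $c$ unless $c$ and $d$ are first bounded effectively for each such pair, which your argument does not provide.
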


The next lemma gives us possibilities for the initial terms of the sequences $v_m$ and $w_n$, and will be improved by Theorem \ref{teorem_izbacen_slucaj}.

\begin{lemma}\cite[Lemma 9]{fil_xy4}\label{lemma_fil_pocetni}
Suppose that $\{a,b,c,d\}$ is a $D(4)$-quadruple with $a<b<c<d$ and that  $w_m$ and  $v_n$  are defined as before. 
\begin{enumerate}
\item[i)] If  equation $v_{2m}=w_{2n}$ has a solution, then $z_0=z_1$ and $|z_0|=2$ or $|z_0|=\frac{1}{2}(cr-st)$ or $z_0<1.608a^{-5/14}c^{9/14}$.
\item[ii)] If equation $v_{2m+1}=w_{2n}$ has a solution, then $|z_0|=t$, $|z_1|=\frac{1}{2}(cr-st)$ and $z_0z_1<0$.
\item[iii)] If equation $v_{2m}=w_{2n+1}$ has a solution, then $|z_1|=s$, $|z_0|=\frac{1}{2}(cr-st)$ and $z_0z_1<0$.
\item[iv)] If equation $v_{2m+1}=w_{2n+1}$ has a solution, then $|z_0|=t$, $|z_1|=s$ and $z_0z_1>0$.
\end{enumerate}
\end{lemma}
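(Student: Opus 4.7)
The plan is to derive congruences modulo $c$ from $v_m=w_n$, combine them with explicit size bounds on the fundamental solutions $z_0,z_1$, and enumerate the surviving configurations case-by-case.

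First I would compute the residues of $v_m$ and $w_n$ modulo $c$. Using $s^2=ac+4\equiv 4\pmod c$, a short induction on the recurrence $v_{m+2}=sv_{m+1}-v_m$ with $v_1=\tfrac12(sz_0+cx_0)$ yields
\[ v_{2m}\equiv z_0\pmod c,\qquad 2v_{2m+1}\equiv sz_0\pmod{2c}, \]
and symmetrically $w_{2n}\equiv z_1$ and $2w_{2n+1}\equiv tz_1\pmod{2c}$. Each parity pairing of $v_m=w_n$ therefore produces an explicit congruence linking $z_0$ and $z_1$ modulo $c$ (or $2c$).

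Next I would bound $|z_0|$ and $|z_1|$. For the equation $cX^2-aZ^2=4(c-a)$, the orbits of integer solutions under the fundamental automorphism $(Z,X)\mapsto\tfrac12(sZ+cX,\,sX+aZ)$ are finite in number, and each orbit has a minimal representative. Exploiting the factorisation
\[ (\sqrt c\, z_0-\sqrt a\, x_0)(\sqrt c\, z_0+\sqrt a\, x_0)=-4(c-a), \]
one shows that in any such minimal representative $x_0/z_0$ is a very good rational approximation to $\sqrt{a/c}$. Classical results on small-height approximations to quadratic irrationals (Worley/Dujella-type lemmas on continued fraction convergents, together with a hypergeometric Thue estimate) then pin the possibilities down to $|z_0|\in\{2,\,t,\,\tfrac12|cr-st|\}$, plus a residual \emph{gap} class for which one can only conclude $|z_0|<1.608\,a^{-5/14}c^{9/14}$. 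The analogous analysis for $cY^2-bZ^2=4(c-b)$ yields $|z_1|\in\{2,\,s,\,\tfrac12|cr-st|\}$ together with its own gap class.

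Each explicit candidate for $|z_0|$ has a definite residue modulo $c$: from $s^2\equiv 4$ and $s^2t^2\equiv 16\pmod c$ one gets $t\equiv\pm 2$ and $\tfrac12(cr-st)\equiv\mp 2\pmod c$, so every candidate reduces to $\pm 2$ modulo $c$, and analogously for $|z_1|$. Substituting into the congruences of the first step and comparing against the size bounds (which force equality whenever two candidates differ by less than $c$) leaves in each case only the configurations stated in the lemma: in case (i) one obtains $z_0=z_1$ after the usual sign reparametrisation within the orbit, while in cases (ii)--(iv) the extra factor $s$ or $t$ selects exactly one admissible pair $(|z_0|,|z_1|)$. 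The sign conditions follow from comparing the signs of $v_1$ and $w_1$ against the common positive value $z=v_m=w_n$. The delicate step throughout is the classification in the second paragraph: producing the explicit constant $1.608$ and the exponent $9/14$ of the gap bound rests on an effective rational-approximation estimate for $\sqrt{a/c}$, and it is precisely this residual gap class that Theorem \ref{teorem_izbacen_slucaj} will later eliminate, thereby improving the present lemma.
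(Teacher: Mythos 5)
Your overall skeleton (congruences for $v_m,w_n$ modulo $2c$, size bounds on the fundamental solutions, parity-by-parity enumeration) matches the standard proof of Filipin's Lemma 9, but the step that actually produces the classification is attributed to the wrong mechanism and would not go through. The trichotomy in (i) --- $|z_0|=2$, $|z_0|=\frac{1}{2}(cr-st)$, or $z_0<1.608a^{-5/14}c^{9/14}$ --- does not come from continued-fraction convergents or a hypergeometric/Thue estimate for $\sqrt{a/c}$, and no argument applied to the single equation $cx^2-az^2=4(c-a)$ can produce it: the fundamental solutions of that one equation are constrained only by the inequality $|z_0|<\sqrt{c\sqrt{c}/\sqrt{a}}$ of Lemma \ref{granice_fundamentalnih} and are certainly not confined to $\{2,t,\frac{1}{2}(cr-st)\}$ plus a small residual class. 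The real mechanism is structural and uses both Pell equations at once. Once the congruences force $z_0=z_1$ (even/even case), the identities $c(x_0^2-4)=a(z_0^2-4)$ and $c(y_1^2-4)=b(z_0^2-4)$ let one show that $d_0:=(z_0^2-4)/c$ is a nonnegative integer with $ad_0+4=x_0^2$, $bd_0+4=y_1^2$, $cd_0+4=z_0^2$. Then either $d_0=0$ (so $|z_0|=2$), or $\{a,b,d_0,c\}$ is a $D(4)$-quadruple with $d_0<c$; if it is regular then $d_0=d_-(a,b,c)$ and $|z_0|=\frac{1}{2}(cr-st)$, and if it is irregular the gap principle for irregular quadruples (a lower bound for $c$ of the shape $c\gg a^{2.5}d_0^{3.5}$, cf.\ Lemma \ref{lemaw4w5w6}) converts via $z_0^2=cd_0+4$ into exactly the bound $z_0<1.608a^{-5/14}c^{9/14}$. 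This is precisely what the Remark following the lemma in the paper records. The values $t$ and $s$ in (ii)--(iv) arise from the same construction, corresponding to $d_0=b$ and $d_0=a$ respectively.

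A second concrete error: from $t^2\equiv 4\pmod{c}$ you cannot conclude $t\equiv\pm2\pmod{c}$, since $c$ is composite in general (e.g.\ $t=7$, $c=15$), so the assertion that ``every candidate reduces to $\pm2$ modulo $c$'', on which your final elimination rests, is unsupported. The correct bookkeeping keeps $t$, $s$ and $\frac{1}{2}(cr-st)$ as they stand and compares the congruences such as $sz_0\equiv 2z_1\pmod{2c}$ directly against the explicit sizes of these candidates and the bounds of Lemma \ref{granice_fundamentalnih}.
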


\begin{rem}
From the proof of \cite[Lemma 9]{fil_xy4} we see that the case where $v_{2m}=w_{2n}$ and $z_0<1.608a^{-5/14}c^{9/14}$ holds only when $d_0=\frac{z_0^2-4}{c}$, $0<d_0<c$, is such that $\{a,b,d_0,c\}$ is an irregular $D(4)$-quadruple. As we can see from the statement of Theorem \ref{teorem_izbacen_slucaj}, this case will be proven impossible and only cases where $z_0$ is given in the terms of elements of a triple $\{a,b,c\}$ will remain.   
\end{rem}

Using the lower bound on $b$ in an irregular quadruple from Lemma \ref{b10na5} we can slightly improve \cite[Lemma 1]{fil_xy4}.

\begin{lemma}\label{granice_fundamentalnih}
Let $(z,x)$ and $(z,y)$ be positive solutions of (\ref{prva_pelova_s_a}) and (\ref{druga_pelova_s_b}). Then there exist solutions $(z_0,x_0)$ of (\ref{prva_pelova_s_a}) and $(z_1,y_1)$ of (\ref{druga_pelova_s_b}) in the ranges
\begin{align*}
1&\leq x_0<\sqrt{s+2}<1.00317\sqrt[4]{ac},\\
1&\leq |z_0|<\sqrt{\frac{c\sqrt{c}}{\sqrt{a}}}<0.05624c,\\
1&\leq y_1<\sqrt{t+2}<1.000011\sqrt[4]{bc},\\
1&\leq |z_1|<\sqrt{\frac{c\sqrt{c}}{\sqrt{b}}}<0.003163c,
\end{align*}
such that 
\begin{align*}
z\sqrt{a}+x\sqrt{c}&=(z_0\sqrt{a}+x_0\sqrt{c})\left(\frac{s+\sqrt{ac}}{2}\right)^m,\\
z\sqrt{b}+y\sqrt{c}&=(z_1\sqrt{b}+y_1\sqrt{c})\left(\frac{t+\sqrt{bc}}{2}\right)^n.
\end{align*}
\end{lemma}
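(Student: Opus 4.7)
The plan is to follow the proof of \cite[Lemma 1]{fil_xy4} verbatim for the structural inequalities and then invoke the improved lower bound $b > 10^5$ (Lemma \ref{b10na5}) to sharpen the numerical constants. Starting with standard Pell theory, the positive integer solutions of (\ref{prva_pelova_s_a}) split into finitely many classes under the action of the fundamental unit $u = (s + \sqrt{ac})/2$, which satisfies $u\bar u = 1$; each class contains a representative $(z_0, x_0)$ with $x_0 \geq 1$ chosen minimal, and every solution in the class has the form $z\sqrt{a}+x\sqrt{c} = (z_0\sqrt{a}+x_0\sqrt{c})u^m$. The same setup applies to (\ref{druga_pelova_s_b}) with the unit $(t+\sqrt{bc})/2$.

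The structural bound $x_0 < \sqrt{s+2}$ follows from the classical minimality argument: computing the coordinates of $(z_0\sqrt{a}+x_0\sqrt{c})\bar{u}$ (or $(z_0\sqrt{a}+x_0\sqrt{c})u$ if $z_0 < 0$), one sees that unless $|z_0| \leq (s-2)x_0/a$, the new $x$-coordinate is a positive integer strictly smaller than $x_0$, contradicting minimality. Substituting $|z_0| \leq (s-2)x_0/a$ into $cx_0^2 - az_0^2 = 4(c-a)$ yields $x_0^2 \leq (c-a)(s+2)/c < s+2$. Combining this with $az_0^2 = cx_0^2 - 4(c-a) < c(s-2) + 4a$ and $c > 4a$ (which follows from Lemma \ref{c_granice} via $c > 4b > 4a$) gives the closed-form bound $z_0^2 < c\sqrt{c}/\sqrt{a}$.

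The numerical constants now come from Lemma \ref{b10na5}. Since $c > b > 10^5$ and $a \geq 1$, we have $ac > 10^5$, so
\[
\frac{s+2}{\sqrt{ac}} \;=\; \sqrt{1+\tfrac{4}{ac}} + \frac{2}{\sqrt{ac}} \;<\; 1.00635,
\]
whose square root gives $\sqrt{s+2} < 1.00317\,\sqrt[4]{ac}$, and $\sqrt{c\sqrt{c}/\sqrt{a}} = c\,(ac)^{-1/4} \leq c \cdot 10^{-5/4} < 0.05624\, c$. The analogous argument for (\ref{druga_pelova_s_b}) with $(a,s)$ replaced by $(b,t)$ produces the sharper constants $1.000011$ and $0.003163$, using the much stronger estimate $bc > b^2 > 10^{10}$.

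I do not foresee any genuine obstacle: the proof is essentially bookkeeping on top of the minimality argument in \cite[Lemma 1]{fil_xy4}. The only care needed is the sign convention for $z_0$ and $z_1$ in the minimality step, and verifying that the additive $O(a)$ term in the estimate for $z_0^2$ is dominated by the principal term $c(s-2)/a$; both are routine once $b > 10^5$ is in force.
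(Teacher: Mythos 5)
Your proposal is correct and follows essentially the same route the paper intends: the paper gives no written proof, stating only that one reruns the minimality/descent argument of \cite[Lemma 1]{fil_xy4} and uses $b>10^5$ from Lemma \ref{b10na5} to sharpen the constants, which is exactly what you do. Your numerical verifications ($ac>10^5$ giving $1.00317$ and $0.05624$, $bc>10^{10}$ giving $1.000011$ and $0.003163$) and the derivation $x_0^2<s+2$, $az_0^2<c(s-2)+4a<c\sqrt{ac}$ all check out.
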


The statement of the next lemma follows the notations from Theorem \ref{teorem_izbacen_slucaj}.
\begin{lemma}\label{regularna_rjesenja}
Let $\{a,b,c\}$ be a $D(4)$-triple such that $z=v_m=w_n$ has a solution $(m,n)$ for which  $d=d_+=\frac{z^2-4}{c}$. Then only one of the following cases can occur:
\begin{enumerate}[i)]
\item $(m,n)=(2,2)$ and $z_0=z_1=\frac{1}{2}(st-cr)$,
\item $(m,n)=(1,2)$ and $z_0=t$, $z_1=\frac{1}{2}(st-cr)$,
\item $(m,n)=(2,1)$ and $z_0=\frac{1}{2}(st-cr)$, $z_1=s$,
\item $(m,n)=(1,1)$ and $z_0=t$, $z_1=s$.
\end{enumerate}
\end{lemma}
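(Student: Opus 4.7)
The plan is to show that when $d=d_+$, the value $z$ is too small for any solution with $n\geq 3$ to exist, and then to enumerate the few remaining small-index cases.

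First, I would compute $z$ explicitly. The definition of $d_+$ gives $2(d_+ - a - b - c) = abc + rst$, so a direct expansion using $r^2 = ab+4$, $s^2 = ac+4$, $t^2 = bc+4$ yields
\[
cd_+ + 4 = \tfrac14(cr+st)^2,
\]
whence $z = \tfrac12(cr+st)$. Since $c^2r^2 = abc^2 + 4c^2$ and $s^2t^2 = abc^2 + 4(a+b)c+16$, the inequality $c > a+b$ (part of Lemma~\ref{c_granice} in either regular or non-regular case) gives $st < cr$. Using $r = \sqrt{ab+4} < b$ (valid since $a<b$ and $b>10^5$ by Lemma~\ref{b10na5}), I obtain $z < cr < bc$.

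Next, I would use Lemma~\ref{granice_fundamentalnih}, which yields $y_1 < 1.000011\sqrt[4]{bc}$, together with $t-1 > 0.999\sqrt{bc}$ (which follows from $bc > 10^{10}$, via Lemma~\ref{b10na5}), to bound $w_n$ from below. For any $n \ge 3$,
\[
w_n \ge w_3 > \frac{c}{2y_1}(t-1)^2 > 0.499\, b^{3/4}c^{7/4} > 157\,bc,
\]
contradicting $z = w_n < bc$. Therefore $n \le 2$. Combined with the relation $n-1 \le m \le 2n+1$ from Lemma~\ref{mnfilipin}, this reduces the problem to finitely many pairs $(m,n)$.

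Finally, I would match the recurrences. The natural starting point is $(m,n)=(1,1)$ with $z_0=t$, $x_0=r$, $z_1=s$, $y_1=r$; a direct check gives $v_1 = w_1 = \tfrac12(st+cr)$, which is case iv). The remaining three cases are then produced by applying the shift identities of Lemma~\ref{lema_nizovi}: the substitution $z_0 \mapsto -\tfrac12(cr-st) = \tfrac12(st-cr)$ raises the index of $v$ by one, and similarly for $w$. This generates exactly the pairs listed in cases i)--iii). The main obstacle is verifying that no other combination of admissible fundamental solutions (bounded by Lemma~\ref{granice_fundamentalnih}) and small indices $(m,n)$ with $n\le 2$ reproduces the value $\tfrac12(cr+st)$; this is the enumeration carried out in \cite[Lemma~6]{fil_xy4}, and the signs of $z_0, z_1$ together with the bounds $|z_0|<0.05624c$, $|z_1|<0.003163c$ rule out the remaining candidates (in particular $n=0$ immediately, and the $(m,n)$ with $m$ or $n$ too large compared to the recurrence growth).
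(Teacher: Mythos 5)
Your proposal is correct and follows essentially the same route as the paper: when $d=d_+$ one has $z=\tfrac12(cr+st)<cr<bc$, while $n\ge 3$ would force $z\ge w_3>0.499\,b^{3/4}c^{7/4}>bc$ by Lemma \ref{granice_fundamentalnih}, so $n\le 2$, after which the enumeration of admissible $(m,n;z_0,z_1)$ is delegated to the proof of \cite[Lemma 6]{fil_xy4} exactly as the paper does. The extra details you supply (the identity $cd_++4=\tfrac14(cr+st)^2$ and the shift identities generating cases i)--iii)) are consistent with, and slightly more explicit than, the paper's argument.
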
 
\begin{proof}
If $n\geq 3$ then 
\begin{align*}
z&\geq w_3> \frac{c}{2y_1}(t-1)^2>\frac{c}{2.000022\sqrt[4]{bc}}\cdot 0.999bc\\
&>0.499b^{3/4}c^{7/4}>0.499bc^{6/4}>157bc
\end{align*}
where we have used Lemma \ref{granice_fundamentalnih} and $bc>10^{10}$ from Lemma \ref{b10na5}.
On the other hand, when $d=d_+$ we have $z=\frac{1}{2}(cr+st)<cr<cb$, which is an obvious contradiction. 
So, when $d=d_+$, we must have $n\leq 2$. Also, since $a<b<c<d$, from the proof of \cite[Lemma 6]{fil_xy4} we see that when $n\leq 2$, the only possibility is $d=d_+$ when $(m,n;z_0,z_1)\in\{ (1,1;t,s),(1,2;t,\frac{1}{2}(st-cr)),(2,1;\frac{1}{2}(st-cr),s),(2,2;\frac{1}{2}(st-cr),\frac{1}{2}(st-cr))\}$.
\end{proof}

This lemma can now be used to get a lower bound on $d$ in terms of the elements of a triple $\{a,b,c\}$.
\begin{lemma}\label{lemaw4w5w6}
Suppose that $\{a,b,c,d\}$ is a $D(4)$-quadruple with $a<b<c<d_+<d$ and that $v_m$ and  $w_n$ are defined as before.
If $z\geq w_n$, $n=4,5,6,7$, then 
$$d>k\cdot b^{n-1.5}c^{n-0.5}$$
where $k=0.249979,0.249974,0.249969,0.249965$ respectively.

If $z\geq v_m$, $n=4,5,6$, then 
$$d>l\cdot a^{m-1.5}c^{n-0.5}$$
where $l=0.243775,0.242245,0.240725$ respectively.

\end{lemma}
\begin{proof}
In the proof of \cite[Lemma 5]{fil_xy4} it has been shown that
\begin{eqnarray*}
\frac{c}{2x_0}(s-1)^{m-1}&<v_m&<cx_0s^{m-1},\\
\frac{c}{2y_1}(t-1)^{n-1}&<w_n&<cy_1t^{n-1}.
\end{eqnarray*}
We use $bc>10^{10}$ for $d>d_+$ and $d= \frac{z^2-4}{c}$ to obtain the desired inequalities.
\end{proof}

We know a relation between $m$ and $n$ if $v_m=w_n$.
\begin{lemma}\cite[Lemma 5]{fil_xy4}\label{mnfilipin}
Let $\{a,b,c,d\}$ be $D(4)$-quadruple. If $v_m=w_n$ then $n-1\leq m\leq 2n+1$.
\end{lemma}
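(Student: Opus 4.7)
The plan is to compare the exponential growth rates of $v_m$ and $w_n$ using the two-sided estimates already recalled in the proof of Lemma~\ref{lemaw4w5w6}, namely
$$\frac{c}{2x_0}(s-1)^{m-1}<v_m<cx_0 s^{m-1},\qquad \frac{c}{2y_1}(t-1)^{n-1}<w_n<cy_1t^{n-1},$$
together with the bounds $x_0<1.00317\sqrt[4]{ac}$ and $y_1<1.000011\sqrt[4]{bc}$ from Lemma~\ref{granice_fundamentalnih}. Since $a<b\le c$ we have $s<t$, and moreover $t^2=bc+4\le c^2+4$, so in particular $t\le c$, while $(s-1)^2\ge ac-2s+1\ge\sqrt{ac}\cdot\sqrt{ac}$ for the ranges of $s,c$ we are working in. These crude comparisons are what drive the two-sided bound $n-1\le m\le 2n+1$.

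For the upper bound, from $v_m=w_n$ one gets
$$(s-1)^{m-1}<2x_0y_1\,t^{n-1}.$$
I would exploit $t\le (s-1)^2$ (which follows from $bc\le (ac)^{2}$ since $b\le c$ and $a\ge 1$, after absorbing the small $\pm$ corrections) to obtain $(s-1)^{m-1-2(n-1)}<2x_0y_1$. Plugging in $x_0y_1\ll (abc^2)^{1/4}$ and $(s-1)\gg\sqrt{ac}$, the inequality $(s-1)^{m-2n+1}<2x_0y_1$ forces $m-2n+1\le 2$, i.e.\ $m\le 2n+1$, since otherwise the left-hand side grows polynomially in $c$ of order strictly larger than the right-hand side (here one uses $c>\max(a,b)$ and Lemma~\ref{b10na5} to make the inequality strict for all admissible triples).

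For the lower bound, from $v_m=w_n$ one gets
$$s^{m-1}>\frac{(t-1)^{n-1}}{2x_0y_1}.$$
Using the cruder comparison $t-1\ge s$ (valid because $b>a$ and $c$ is large), this reads $s^{m-n}>1/(2x_0y_1)$, and since $2x_0y_1<c$ while $s^2>ac\ge c$, a negative value of $m-n$ smaller than $-1$ immediately contradicts $s\ge 2$. Hence $m\ge n-1$.

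The main obstacle is managing the small constant factors $x_0,y_1$ and the $\pm$ corrections hidden in $s-1$ vs.\ $s$ and $t-1$ vs.\ $t$, so that the coarse exponential comparison produces the clean integer conclusion $m\le 2n+1$ rather than a bound like $m\le 2n+O(1)$. This is precisely where the quantitative bounds in Lemma~\ref{granice_fundamentalnih} (and the lower bound $b>10^{5}$ from Lemma~\ref{b10na5}, if one is in the irregular regime) are used to absorb all sub-exponential terms into the inequality.
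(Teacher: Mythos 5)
The paper does not prove this lemma at all: it is quoted verbatim from \cite[Lemma 5]{fil_xy4}, so your attempt has to be judged on its own merits rather than against an argument in the text.

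Your lower bound $m\ge n-1$ is essentially sound (note only that $t-1\ge s$ holds because $t>s$ are \emph{integers}, and that you need the sharper estimate $2x_0y_1<2.01(abc^2)^{1/4}<ac+4=s^2$ rather than the claim ``$2x_0y_1<c$'', which can fail when $ab$ is close to $c^2$). The genuine gap is in the upper bound: the inequality $t\le (s-1)^2$ on which the whole step rests is \emph{false} in general. Take the regular $D(4)$-triple $\{1,96,117\}$: here $s=11$, $t=106$, and $(s-1)^2=100<106=t$. This is not an isolated accident — whenever $a=1$ and $b$ is close to $c$ (as for regular triples $c=a+b+2r$) one has $t\approx c-\sqrt{c}$ while $(s-1)^2\approx c-2\sqrt{c}$, so $t>(s-1)^2$. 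What is true is only $t<s^2$, and the deficit cannot be ``absorbed'': replacing $t^{n-1}$ by $(s-1)^{2(n-1)}$ costs a factor $\bigl(t/(s-1)^2\bigr)^{n-1}\approx(1+c^{-1/2})^{n-1}$, which grows exponentially in $n$, and at the point where this lemma is needed there is no a priori bound on $n$ in terms of $c$ (such bounds are derived \emph{later}, partly using this very lemma). A further warning sign is that your chain, if it were valid, would prove $m\le 2n$, strictly stronger than the stated $m\le 2n+1$.

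The standard repair is to compare growth rates with respect to the true roots of the recurrences, $\xi=\tfrac{1}{2}(s+\sqrt{ac})$ and $\eta=\tfrac{1}{2}(t+\sqrt{bc})$, for which the clean inequalities $\xi<\eta<c<\xi^2$ do hold (indeed $\eta<\sqrt{(c-1)c}+1<c$ and $\xi^2>ac\ge c$), together with the explicit representation $z\sqrt{a}+x\sqrt{c}=(z_0\sqrt{a}+x_0\sqrt{c})\xi^m$ from Lemma \ref{granice_fundamentalnih} and two-sided bounds on the coefficients $\tfrac{1}{2\sqrt{a}}(z_0\sqrt{a}+x_0\sqrt{c})$ and $\tfrac{1}{2\sqrt{b}}(z_1\sqrt{b}+y_1\sqrt{c})$. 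Then $m\log\xi-n\log\eta$ is bounded by the logarithm of a fixed power of $c$, and $1<\log\eta/\log\xi<2$ gives the two-sided relation between $m$ and $n$ without any per-step loss.
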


We remark that we can also prove that a better upper bound holds using a more precise argument and the fact that $c<7b^{11}$ (which is proven in \cite[Lemma 8]{sestorka}).
\begin{lemma}\label{lema_epsilon_m_n}
If $c>b^{\varepsilon}$, $1\leq \varepsilon< 12$, then
$$m<\frac{\varepsilon+1}{0.999\varepsilon}n+1.5-0.4\frac{\varepsilon+1}{0.999\varepsilon}.$$
\end{lemma}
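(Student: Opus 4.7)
The plan is to refine the crude bound $m\leq 2n+1$ from Lemma \ref{mnfilipin} by quantitatively comparing the lower envelope for $v_m$ with the upper envelope for $w_n$. From the proof of Lemma \ref{lemaw4w5w6} we have
\[
v_m>\frac{c(s-1)^{m-1}}{2x_0},\qquad w_n<cy_1t^{n-1},
\]
so the equation $v_m=w_n$ forces $(s-1)^{m-1}<2x_0y_1t^{n-1}$. Inserting $x_0<\sqrt{s+2}$ and $y_1<\sqrt{t+2}$ from Lemma \ref{granice_fundamentalnih}, taking logarithms and dividing by $\log(s-1)$ yields
\[
m<1+\frac{\log 2}{\log(s-1)}+\frac{\log(s+2)}{2\log(s-1)}+\frac{\log(t+2)}{2\log(s-1)}+(n-1)\alpha,
\]
where $\alpha:=\log t/\log(s-1)$.

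The next step is to estimate $\alpha$. The hypothesis $c>b^{\varepsilon}$ together with $a\geq 1$ gives $\log(bc)/\log(ac)\leq(\varepsilon+1)/\varepsilon$. Combined with the numerical lower bounds $s>316$, $t>10^5$, $bc>10^{10}$ that come from Lemma \ref{b10na5} and $c>b^{\varepsilon}\geq b>10^5$, the corrections from replacing $\log\sqrt{ac+4}$ by $\log(s-1)$ and from the additive $+4$ inside the radicals are absorbed by the factor $1/0.999$, yielding $\alpha\leq A:=(\varepsilon+1)/(0.999\varepsilon)$. The same numerical bounds also give $\log(s+2)/(2\log(s-1))\leq\tfrac12+O(10^{-3})$ and $\log(t+2)/(2\log(s-1))\leq\alpha/2+O(10^{-5})$, so that
\[
m<\alpha\bigl(n-\tfrac12\bigr)+\tfrac32+\frac{\log 2}{\log(s-1)}+\delta,
\]
with $\delta$ negligible.

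For the final step I would exploit the fact that $c\geq b^{\varepsilon}>10^{5\varepsilon}$ gives $\log(s-1)\geq\tfrac12\log c-1/\sqrt{c}>5.75\varepsilon$, whence $\log 2/\log(s-1)\leq 0.121/\varepsilon$. On the other hand $0.1A=0.1(\varepsilon+1)/(0.999\varepsilon)\geq 0.2/\varepsilon$ for $\varepsilon\geq 1$, so the error $\log 2/\log(s-1)+\delta$ is comfortably bounded by $0.1A$. Combined with $\alpha\leq A$ and $n\geq 1$ (the only regime in which the bound to be proved is not already implied by Lemma \ref{mnfilipin}), this gives
\[
m<A\bigl(n-\tfrac12\bigr)+\tfrac32+0.1A=An+\tfrac32-0.4A,
\]
which is the stated bound.

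The main obstacle will be the careful bookkeeping of multiple small error terms—those from replacing $s$ by $s-1$, from the additive $+4$ and $+2$ inside the radicals, and from the logarithmic factor $1/0.999$—to check that they jointly fit inside the slack provided by the coefficient $0.999$ in $A$ and the gap between $\tfrac12$ and $0.4$, uniformly for $\varepsilon\in[1,12)$.
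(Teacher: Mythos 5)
Your proposal is correct and follows essentially the same route as the paper: the paper likewise starts from $\tfrac{c}{2x_0}(s-1)^{m-1}<v_m=w_n<cy_1t^{n-1}$ with the bounds of Lemma \ref{granice_fundamentalnih}, absorbs $x_0,y_1$ and the constant factor into the exponent slack $0.5-0.4$, and compares $s-1$ with $t$ via $s-1>t^{0.999\varepsilon/(\varepsilon+1)}$, which is exactly your estimate $\alpha\le A$ written multiplicatively instead of in logarithms. The only (shared, harmless) caveat is that both arguments implicitly assume $m,n$ are not tiny, which is automatic in every application of the lemma.
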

\begin{proof}
If $v_m=w_n$ then
$$2.00634^{-1}(ac)^{-1/4}(s-1)^{m-1}<1.000011(bc)^{1/4}t^{n-1}.$$
Since $c>b>10^5$ we can easily check that $s-1>0.9968a^{1/2}c^{1/2}$ and $b^{1/2}c^{1/2}<t<1.0001b^{1/2}c^{1/2}$, so the previous inequality implies
$$(s-1)^{m-3/2}<2.00956t^{n-0.5}<t^{n-0.4}.$$
On the other hand, the assumption $b<c^{1/\varepsilon}$ yields $t<1.0001c^{\frac{\varepsilon +1}{2\varepsilon}}$ and
$$s-1>0.9968c^{1/2}>\left(0.99t \right)^{\frac{\varepsilon}{\varepsilon+1}}>t^{0.999\frac{\varepsilon}{\varepsilon+1}}.$$
So we observe that an inequality 
$$0.999(m-1.5)\frac{\varepsilon}{\varepsilon +1}<n-0.4$$
must hold, which proves our statement. 
\end{proof}

The next lemma follows from the results of \cite[Lemma 5]{ireg_pro} and \cite[Lemma 5]{sestorka}.
\begin{lemma}\label{donje na m i n}
If $v_m$=$w_n$ has a solution for $m>2$, then $6\leq m\leq 2n+1$ and $n\geq 7$ or the case $|z_0|<1.608a^{-5/14}c^{9/14}$ from Lemma \ref{lemma_fil_pocetni} holds and either $6\leq m\leq 2n+1$ or $m=n=4$.
\end{lemma}

\begin{lemma}\label{lema_d_vece}
Assume that $c\leq 0.243775a^{2.5}b^{3.5}$.  If $z=v_m=w_n$ for some even $m$ and $n$ then $d>0.240725a^{4.5}c^{5.5}$.
\end{lemma}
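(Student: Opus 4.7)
The plan is to apply Lemma~\ref{donje na m i n} to restrict the possible values of $m$ and $n$, and then treat each case separately. Since $m,n$ are even and $v_m=w_n$ has a solution, Lemma~\ref{donje na m i n} gives two alternatives: either $m\geq 6$, or we are in the special case of Lemma~\ref{lemma_fil_pocetni} with $m=n=4$ and $|z_0|<1.608\,a^{-5/14}c^{9/14}$.

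If $m\geq 6$ the conclusion is immediate: applying the $v_m$-based bound of Lemma~\ref{lemaw4w5w6} with $m=6$ yields $d>0.240725\,a^{4.5}c^{5.5}$.

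The remaining case $m=n=4$ (in the special case) I would rule out using the hypothesis $c\leq 0.243775\,a^{2.5}b^{3.5}$. By the Remark following Lemma~\ref{lemma_fil_pocetni}, this exceptional case forces the existence of a positive integer $d_0<c$ such that $\{a,b,d_0,c\}$ is an irregular $D(4)$-quadruple. Sort $\{a,b,d_0\}$ as $A<B<C$ and view $c$ as the (irregular) extension of the triple $\{A,B,C\}$ to a fourth larger element; then apply the analogs of Lemmas~\ref{donje na m i n} and~\ref{lemaw4w5w6} to the Pellian sequences attached to this smaller triple. The analogue of Lemma~\ref{donje na m i n} forces the relevant indices $\tilde m,\tilde n\geq 4$, and then the $v$- and $w$-lower bounds of Lemma~\ref{lemaw4w5w6} yield
\[
c>0.243775\,A^{2.5}C^{3.5} \qquad \text{and} \qquad c>0.249979\,B^{2.5}C^{3.5}.
\]
Case-splitting on the position of $d_0$: if $d_0>b$ then $(A,C)=(a,d_0)$ with $d_0>b$, and if $a<d_0<b$ then $(A,C)=(a,b)$, so the first bound gives $c>0.243775\,a^{2.5}b^{3.5}$ in both subcases; if $d_0<a$ then $(B,C)=(a,b)$, so the second bound gives $c>0.249979\,a^{2.5}b^{3.5}>0.243775\,a^{2.5}b^{3.5}$. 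In every subcase this contradicts the hypothesis, so $m=n=4$ cannot occur, and only the case $m\geq 6$ remains.

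The main obstacle will be checking that the analogs of Lemmas~\ref{donje na m i n} and~\ref{lemaw4w5w6} transfer cleanly to the smaller quadruple in each subcase --- in particular that ``irregular'' guarantees $C<d_{+}(A,B,C)<c$ so that the framework of Lemma~\ref{lemaw4w5w6} is applicable, and that the auxiliary numerical assumptions (for instance the analogue of $bc>10^{10}$ coming from Lemma~\ref{b10na5}, used in the proof of Lemma~\ref{lemaw4w5w6}) carry over, which is most delicate in the subcase $d_0<a$ where the smaller triple contains a potentially tiny element.
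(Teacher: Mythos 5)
Your proposal is correct and follows essentially the same route as the paper: the paper's proof also rules out the exceptional fundamental solution by observing that it produces an irregular quadruple $\{a,b,d_0,c\}$ to which Lemma~\ref{lemaw4w5w6} applies, yielding $c>0.243775a^{2.5}b^{3.5}$ and contradicting the hypothesis, and then concludes from $\max\{m,n\}\geq 6$ via Lemma~\ref{lemaw4w5w6}. Your explicit case analysis on the position of $d_0$ and the checks you flag (irregularity of the sub-quadruple, the $10^{10}$ bound via Lemma~\ref{b10na5}) are exactly the details the paper leaves implicit.
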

\begin{proof}
Let us assume that $z_0\notin\left\{2,\frac{1}{2}(cr-st)\right\}$, i.e.\@ there exists an irregular $D(4)$-quadruple $\{a,b,d_0,c\}$, $c>d_0$ but then Lemma \ref{lemaw4w5w6} implies $c>0.243775a^{2.5}b^{3.5}$, a contradiction. So, we must have $z_0\in\left\{2,\frac{1}{2}(cr-st)\right\}$ and by Lemma \ref{donje na m i n} we know that $\max\{m,n\}\geq 6$. The statement now follows from Lemma \ref{lemaw4w5w6}. 
\end{proof}

By using the improved lower bound on $d$ in an irregular quadruple from Lemma \ref{lemaw4w5w6} we can prove the next result in the same way as \cite[Lemma 9]{fil_xy4} is proved.
\begin{lemma}
If $v_{m}=w_{n}$ for some even $m$ and $n$ and $|z_0|\notin \left\{2,\frac{1}{2}(cr-st)\right\}$ then $|z_0|<1.2197b^{-5/14}c^{9/14}$.
\end{lemma}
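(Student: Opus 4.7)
The plan is to imitate the proof of \cite[Lemma 9]{fil_xy4}, replacing the cruder estimates used there by the sharper ones now available from Lemma \ref{lemaw4w5w6}. Since $|z_0|\notin\{2,\tfrac{1}{2}(cr-st)\}$, the integer $d_0:=(z_0^2-4)/c$ is a positive integer strictly less than $c$, and $\{a,b,d_0,c\}$ forms an irregular $D(4)$-quadruple with $a<b<d_0<c$. Irregularity forces $c>d_+(a,b,d_0)$, so this smaller quadruple satisfies the hypotheses of Lemma \ref{lemaw4w5w6}.

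Next, viewing $c$ as the extension of the triple $\{a,b,d_0\}$, the identity $d_0 c+4=z_0^2$ identifies $z_0$ as the Pell solution associated with this extension. Applying Lemma \ref{lemaw4w5w6} to the sub-quadruple with the $w$-sequence and the smallest admissible index $n=4$ yields
$$c>0.249979\cdot b^{2.5}\,d_0^{3.5}.$$
Substituting $d_0=(z_0^2-4)/c$ and rearranging produces
$$z_0^2<4+(0.249979)^{-2/7}\,b^{-5/7}c^{9/7},$$
from which the claim follows by taking square roots and absorbing the additive $4$ into the leading constant, using that $b\geq 10^5$ (Lemma \ref{b10na5}) together with $c>4b$ (Lemma \ref{c_granice}) to ensure $b^{-5/7}c^{9/7}$ is large enough that the inflation from $(0.249979)^{-1/7}\approx 1.21902$ to $1.2197$ is harmless.

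The main obstacle is justifying the use of Lemma \ref{lemaw4w5w6} at index $n=4$ for the sub-quadruple, which requires index-bookkeeping analogous to Lemma \ref{donje na m i n} to confirm that $z_0\geq w_4$ in the $w$-sequence associated with $\{a,b,d_0\}$; this is exactly the point at which the improvement over Filipin's original argument enters, since he was forced to use weaker bounds on $w_4$ and on the $v$-sequence (hence the factor $a^{-5/14}$ rather than $b^{-5/14}$). Once the index question is settled, the argument is essentially a re-run of Filipin's template with the improved numerics from Lemma \ref{lemaw4w5w6}.
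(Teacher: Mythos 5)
Your overall strategy is the one the paper intends (the paper gives no written proof, only the remark that Filipin's argument for \cite[Lemma 9]{fil_xy4} goes through with the sharper bound of Lemma \ref{lemaw4w5w6}), and your numerics in the case you treat are right: $c>0.249979\,b^{2.5}d_0^{3.5}$ does give $d_0<0.249979^{-2/7}b^{-5/7}c^{2/7}$, and since then $c>0.249979\,b^6$ the additive $4$ is negligible and $0.249979^{-1/7}<1.2191<1.2197$.

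There is, however, a concrete gap: you assume $a<b<d_0<c$, i.e.\@ that $d_0=(z_0^2-4)/c$ is the \emph{largest} element of the sub-triple. Nothing forces this; from Lemma \ref{granice_fundamentalnih} one only gets $d_0<\sqrt{c/a}$, which can easily be below $b$ or even below $a$, and indeed the paper's own proof of Theorem \ref{teorem_izbacen_slucaj} explicitly sorts $\{a,b,d_0\}$ as $\{a_1,a_2,a_3\}$ precisely because the position of $b$ is unknown. This matters because Lemma \ref{lemaw4w5w6} bounds $c$ from below by $k\,a_2^{\,n-1.5}a_3^{\,n-0.5}$ with $a_2,a_3$ the second and third elements of the sub-triple, so when $d_0<b$ the inequality you obtain is $c>0.249979\,d_0^{2.5}b^{3.5}$ (or $c>0.249979\,a^{2.5}b^{3.5}$ if $d_0<a$), which does not directly bound $d_0$ the way your displayed inequality does. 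These remaining cases can be salvaged — e.g.\@ for $a<d_0<b$ one gets $c^{2/7}>0.6729\,d_0^{5/7}b$, whence $1.48767\,b^{-5/7}c^{9/7}>1.001\,d_0^{5/7}b^{2/7}c>d_0c+4$ since $d_0<b$ — but they must be written out, and the margin in the constant is much thinner there (a factor $1.001$ rather than the comfortable slack you quote). Separately, you correctly flag but do not close the index question: to invoke Lemma \ref{lemaw4w5w6} with the $w'$-sequence of the sub-quadruple you need its index $n'\geq 4$, and Lemma \ref{donje na m i n} alone only gives $n'\geq 3$ from $6\leq m'\leq 2n'+1$; one must add the parity argument (as in the first paragraph of the proof of Lemma \ref{lema_uvodna_teorem_izbacen}) to rule out $n'=3$. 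As it stands the proposal proves the statement only under the extra hypothesis $d_0>b$ and modulo the index bookkeeping.
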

We can prove some upper and lower bounds on $c$ in the terms of smaller elements, depending on the value of $z_0$, similarly as in \cite{fm}, by using Lemma \ref{granice_fundamentalnih}
\begin{lemma}\label{lema_tau}
Set
$$\tau=\frac{\sqrt{ab}}{r}\left(1-\frac{a+b+4/c}{c} \right), \quad (<1).$$
We have that
\begin{enumerate}[i)]
\item $|z_0|=\frac{1}{2}(cr-st)$ implies $c<ab^2\tau^{-4}$,
\item $|z_1|=\frac{1}{2}(cr-st)$ implies $c<a^2b\tau^{-4}$,\label{2.8drugi}
\item $|z_0|=t$ implies $c>ab^2$, \label{2.8treci}
\item $|z_1|=s$ implies $c>a^2b$,
\end{enumerate}
and \ref{2.8drugi}) and \ref{2.8treci}) cannot occur simultaneously when $d>d_+$.
\end{lemma}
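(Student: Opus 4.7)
The plan is to play the upper bounds on fundamental solutions from Lemma \ref{granice_fundamentalnih}, namely $|z_0|<c^{3/4}/a^{1/4}$ and $|z_1|<c^{3/4}/b^{1/4}$, against explicit expressions for the quantities $\tfrac{1}{2}(cr-st)$, $t$, $s$ that appear as candidates for $|z_0|$ or $|z_1|$. Cases (iii) and (iv) will be immediate; cases (i) and (ii) require one algebraic identity relating $\tfrac{1}{2}(cr-st)$ to the quantity $\tau$.

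For (iii) and (iv), if $|z_0|=t=\sqrt{bc+4}$, then squaring $t<c^{3/4}/a^{1/4}$ gives $bc<bc+4<c^{3/2}/a^{1/2}$, which rearranges to $c>ab^2$. The argument for $|z_1|=s=\sqrt{ac+4}<c^{3/4}/b^{1/4}$ is entirely symmetric and yields $c>a^2b$.

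For (i) and (ii) the key computation is
\[
(cr)^2-(st)^2 = c^2(ab+4)-(ac+4)(bc+4) = 4(c^2-ac-bc-4),
\]
so that $\tfrac{1}{2}(cr-st) = 2(c^2-ac-bc-4)/(cr+st)$. The definition of $\tau$ unpacks as
\[
\tau = \frac{\sqrt{ab}}{r}\cdot\frac{c^2-ac-bc-4}{c^2},
\]
i.e.\ $c^2-ac-bc-4 = \tau rc^2/\sqrt{ab}$, and this quantity is positive since $c\geq a+b+2r$. Because $cr>st$, we have $cr+st<2cr$, and therefore
\[
\tfrac{1}{2}(cr-st) \;=\; \frac{2\tau rc^2}{\sqrt{ab}(cr+st)} \;>\; \frac{2\tau rc^2}{2\sqrt{ab}\,cr} \;=\; \frac{\tau c}{\sqrt{ab}}.
\]
In case (i), combining this with $|z_0|=\tfrac{1}{2}(cr-st)<c^{3/4}/a^{1/4}$ gives $\tau c/\sqrt{ab}<c^{3/4}/a^{1/4}$, which after raising to the fourth power rearranges to $c<ab^2\tau^{-4}$. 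In case (ii), the analogous bound $|z_1|<c^{3/4}/b^{1/4}$ produces $c<a^2b\tau^{-4}$.

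The final clause concerns the simultaneous occurrence of (ii) and (iii), i.e.\ $|z_0|=t$ and $|z_1|=\tfrac{1}{2}(cr-st)$. This is precisely the configuration of case ii) of Theorem \ref{teorem_izbacen_slucaj}, and the ``moreover'' clause of that theorem rules it out whenever $d>d_+$, so quoting it completes the proof. The only place one has to be careful is checking that the positivity of $\tau$ (equivalently $c^2-ac-bc-4>0$) and the strict inequality $cr>st$ indeed hold for every $D(4)$-triple; this is routine from $c\geq a+b+2r$ together with $r\geq 2$.
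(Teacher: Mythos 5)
Your derivations of i)--iv) are correct and follow exactly the route the paper intends (it omits the proof, pointing to \cite{fm} and Lemma \ref{granice_fundamentalnih}): the identity $(cr)^2-(st)^2=4(c^2-ac-bc-4)$, the rewriting $c^2-ac-bc-4=\tau rc^2/\sqrt{ab}$, and the comparison of $\frac{1}{2}(cr-st)>\tau c/\sqrt{ab}$, $t$, $s$ with the bounds $|z_0|<c^{3/4}a^{-1/4}$, $|z_1|<c^{3/4}b^{-1/4}$ all check out.

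The final clause, however, is handled circularly. You prove that ii) and iii) cannot hold simultaneously by quoting the ``moreover'' part of Theorem \ref{teorem_izbacen_slucaj}; but the paper's proof of that theorem ends with ``The last assertion of Theorem \ref{teorem_izbacen_slucaj} follows from Lemma \ref{lema_tau}'' --- i.e.\ the logical dependence runs in the opposite direction, and Lemma \ref{lema_tau} (in Section 2) precedes the proof of the theorem (in Section 3). You cannot use the theorem here. The intended argument is a direct combination of the two bounds you have already proved: if ii) and iii) held together, then $ab^2<c<a^2b\tau^{-4}$, hence $\tau^4<a/b$. On the other hand, iii) gives $c>ab^2$, so $\frac{a+b+4/c}{c}<\frac{2b}{ab^2}=\frac{2}{ab}$ and $\frac{\sqrt{ab}}{r}=(1+\tfrac{4}{ab})^{-1/2}>1-\frac{2}{ab}$, whence $\tau>1-\frac{4}{ab}$ and $\tau^4>1-\frac{16}{ab}$. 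Thus $\tau^4<a/b=1-\frac{b-a}{b}$ forces $a(b-a)<16$, which is impossible because $d>d_+$ gives $b\geq a+57\sqrt{a}$ by Lemma \ref{b_a_57sqrt}. Replacing your appeal to the theorem with this two-line computation closes the gap.
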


The next lemma follows easily  by induction.
\begin{lemma}\label{lema_nizovi}
Let $\{v_{z_0,m}\}$ denote a  sequence $\{v_m\}$ with an initial value $z_0$ and $\{w_{z_1,n}\}$ denote a sequence $\{w_n\}$ with an initial value $z_1$. It holds that $v_{\frac{1}{2}(cr-st),m}=v_{-t,m+1}$, $v_{-\frac{1}{2}(cr-st),m+1}=v_{t,m}$ for each $m\geq 0$ and $w_{\frac{1}{2}(cr-st),n}=w_{-s,n+1}$, $w_{-\frac{1}{2}(cr-st),n+1}=w_{s,n}$ for each $n \geq 0$.
\end{lemma}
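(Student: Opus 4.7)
The plan is to induct on the index. Both sides of each proposed identity satisfy the \emph{same} three-term linear recurrence ($u_{k+2}=su_{k+1}-u_k$ for the $v$-sequences, $u_{k+2}=tu_{k+1}-u_k$ for the $w$-sequences). Consequently, an equality that holds at two consecutive indices propagates to every larger index, so in each of the four claims it suffices to verify the equality at its two lowest indices.

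To compute the base values one needs the positive companion $x_0$ (respectively $y_1$) attached to each initial $z$-value, i.e.\@ the positive integer satisfying the Pellian equation (\ref{prva_pelova_s_a}) (respectively (\ref{druga_pelova_s_b})). For $z_0=-t$ the equation $cx_0^2=4(c-a)+at^2=4c+abc=cr^2$ gives $x_0=r$; similarly for $z_1=-s$ one obtains $y_1=r$. For $z_0=\pm\tfrac12(cr-st)$ a direct substitution using $s^2-ac=4$ yields $x_0=\tfrac12(sr-at)$, which is positive since $(ac+4)(ab+4)>a^2(bc+4)$ (equivalent to $b+c+4/a>a$, trivially true). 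Analogously, for $z_1=\pm\tfrac12(cr-st)$ one gets $y_1=\tfrac12(tr-bs)>0$.

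With these in hand, the base cases for the first identity read: at index $0$, $v_{(cr-st)/2,0}=\tfrac12(cr-st)$ equals $v_{-t,1}=\tfrac12(s(-t)+cr)=\tfrac12(cr-st)$; at index $1$, one expands
\[
v_{(cr-st)/2,1}=\tfrac12\!\left(s\cdot\tfrac{cr-st}{2}+c\cdot\tfrac{sr-at}{2}\right)=\tfrac14(scr-s^2t+csr-act)=\tfrac14(2scr-(ac+4)t-act)
\]
and $v_{-t,2}=s\cdot\tfrac12(cr-st)-(-t)=\tfrac12(scr-s^2t+2t)=\tfrac12(scr-act-2t)$, so both sides agree after a single application of $s^2-ac=4$. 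The identity $v_{-(cr-st)/2,m+1}=v_{t,m}$ is checked the same way: $v_{-(cr-st)/2,1}=\tfrac14(s(st-cr)+c(sr-at))=\tfrac14t(s^2-ac)=t=v_{t,0}$, and $v_{-(cr-st)/2,2}=st-\tfrac12(st-cr)=\tfrac12(st+cr)=v_{t,1}$. The two $w$-identities are handled verbatim under the substitution $s\leftrightarrow t$, $a\leftrightarrow b$, $x_0\leftrightarrow y_1$.

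Having matched the two lowest indices, the inductive step is purely formal: if $v_{-t,m+1}=v_{(cr-st)/2,m}$ and $v_{-t,m+2}=v_{(cr-st)/2,m+1}$, then
\[
v_{-t,m+3}=sv_{-t,m+2}-v_{-t,m+1}=sv_{(cr-st)/2,m+1}-v_{(cr-st)/2,m}=v_{(cr-st)/2,m+2},
\]
and similarly in the remaining three cases. There is no real obstacle; the only care required is the bookkeeping around signs (ensuring the chosen $x_0$ and $y_1$ are the \emph{positive} companions supplied by Lemma \ref{granice_fundamentalnih}) and invoking $s^2-ac=4$ and $t^2-bc=4$ at the appropriate moments.
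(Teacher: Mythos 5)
Your proof is correct and matches the paper's approach: the paper simply states that the lemma "can be easily proved by induction," and your argument is exactly that induction, carried out with the right companion values $x_0=r$ and $x_0=\tfrac12(sr-at)$ and correct base-case verifications using $s^2-ac=4$.
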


For the proof of Theorem \ref{teorem_prebrojavanje} we will use the previous lemma. It is obvious that if we "shift" a sequence as in Lemma \ref{lema_nizovi} the initial term of the new sequence would not necessarily satisfy the bounds from Lemma \ref{granice_fundamentalnih}. In the next lemma we will prove some new lower bounds on $m$ and $n$ when $|z_0|=t$ and $|z_1|=s$, but without assuming the bounds on $z_0$ and $z_1$ from Lemma \ref{granice_fundamentalnih}. Since Filipin has deduced in \cite{ireg_pro} that $n\geq 7$ in any case $(z_0,z_1)$ which appears when "shifting" sequences as in Lemma \ref{lema_nizovi}, we can consider that bound already established. Even though the following proof is analogous to the proof of \cite[Lemma 2.6]{cfm}, there is more to consider in the $D(4)$ case which is why we present some details of the proof.

\begin{proposition}\label{prop_m_n_9}
Let $\{a,b,c\}$ be a $D(4)$-triple, $a<b<c$, $b>10^5$ and $c>ab+a+b$. Let us assume that the equation $v_m=w_n$ has a solution for $m>2$ such that $|z_0|=t$, $|z_1|=s$, $z_0z_1>0$ and $m\equiv n\equiv 1 \ (\bmod \ 2)$. Then $\min\{m,n\}\geq 9$.
\end{proposition}
\begin{proof}
It is easy to see that $v_1=w_1<v_3<w_3$. If we show that
\begin{enumerate}[i)]
\item $w_3<v_5<w_5<v_9$ and $v_7\neq w_5$,
\item $v_7<w_7<v_{13}$ and $v_9\neq w_7\neq v_{11}$,
\end{enumerate}
from Lemma \ref{mnfilipin} we see that it leads to a conclusion that $\min\{m,n\}\geq 9$. 

Let $(z_0,z_1)=(\pm t,\pm s)$. We derive that
\begin{eqnarray*}
w_3&=&\frac{1}{2}(cr\pm st)(bc+1)+cr\\
w_5&=&\frac{1}{2}(cr\pm st)(b^2c^2+3bc+1)+cr(bc+2)\\
w_7&=&\frac{1}{2}(cr\pm st)(b^3c^3+5b^2c^2+6bc+1)+cr(b^2c^2+4bc+3)
\end{eqnarray*}
and
\begin{eqnarray*}
v_5&=&\frac{1}{2}(cr\pm st)(a^2c^2+3ac+1)+cr(ac+2)\\
v_7&=&\frac{1}{2}(cr\pm st)(a^3c^3+5a^2c^2+6ac+1)+cr(a^2c^2+4ac+3)\\
v_9&=&\frac{1}{2}(cr\pm st)(a^4c^4+7a^3c^3+15a^2c^2+10ac+1)+\\&&cr(a^3c^3+6a^2c^2+10ac+4)\end{eqnarray*}
\begin{eqnarray*}
v_{11}&=&\frac{1}{2}(cr\pm st)(a^5c^5+9a^4c^4+28a^3c^3+35a^2c^2+15ac+1)+\\&&cr(a^4c^4+8a^3c^3+21a^2c^2+20ac+5)\\
v_{13}&=&\frac{1}{2}(cr\pm st)(a^6c^6+11a^5c^5+45a^4c^4+84a^3c^3+70a^2c^2+21ac+1)+\\&&cr(a^5c^5+10a^4c^4+36a^3c^3+56a^2c^2+35ac+6)
\end{eqnarray*}
Since $a<b$, $c>ab$, and the sequences $v_m$ and $w_n$ are increasing, it is easy to see that $w_3<v_5<v_7$, $v_5<w_5<v_7<v_9<v_{11}$, $v_7<w_7$, $w_5<v_9$ and $w_7<v_{13}$. It remains to prove that $v_7\neq w_5$, $v_9\neq w_7$ and $ v_{11}\neq w_7$.

From the Lemma \ref{donje na m i n} and the explanation before this lemma, we consider that $v_7\neq w_5$ is already proven but it is not hard to follow the next proof in order to deduce this case also. We will show only that $v_9\neq w_7$, and the case  $v_{11}\neq w_7$ can be shown similarly with only some technical details changed.

\medskip
 Let us assume to the contrary, that $v_9= w_7$. We have for $(z_0,z_1)=(\pm t,\pm s)$ that
\begin{align*}
&cr(a^4c^4+9a^3c^3+27a^2c^2+30ac-b^3c^3-7b^2c^2-14bc+2)=\\
&\mp st (a^4c^4+7a^3c^3+15a^2c^2+10ac-b^3c^3-5b^2c^2-6bc).
\end{align*}

\begin{enumerate}[a)]
\item \textbf{Case $z_0=-t$}\\
Since $cr>st$ we have
\begin{align*}
&a^4c^4+9a^3c^3+27a^2c^2+30ac-b^3c^3-7b^2c^2-14bc+2<\\
&<a^4c^4+7a^3c^3+15a^2c^2+10ac-b^3c^3-5b^2c^2-6bc
\end{align*}
which leads to 
\begin{equation}\label{v9w7_prva_njdk}a^3c<b^2.
\end{equation}
Since $c>10^5$ we have $b>316a$. On the other hand, we easily see that 
$$cr-st>\frac{4c^2-4ac-4bc-16}{2cr}>\frac{2ab}{r}>1.99r,$$
which can be used to prove
\begin{align*}
v_9&<\frac{1}{2}(cr- st)(a^4c^4\left(2.0051+\frac{7}{ac} \right)\\&+15a^2c^2+10ac+1)+cr(6a^2c^2+10ac+4),\\
w_7&>\frac{1}{2}(cr- st)(b^3c^3+15a^2c^2+10ac+1)+cr(6a^2c^2+10ac+4).
\end{align*}
So 
\begin{equation}\label{v9w7_druga} b^3<2.006a^4c.
 \end{equation}
 By combining equations (\ref{v9w7_prva_njdk}) and (\ref{v9w7_druga}) we see that 
 $b<2.006a$, which is in a contradiction to $b>316a$. 

\item \textbf{Case $z_0=t$}\\
Since $cr>st$ we have
\begin{align*}
&a^4c^4+9a^3c^3+27a^2c^2+30ac-b^3c^3-7b^2c^2-14bc+2<\\
&<-a^4c^4-7a^3c^3-15a^2c^2-10ac+b^3c^3+5b^2c^2+6bc
\end{align*}
which leads to 
\begin{equation}
2a^4c^4+16a^3c^3+42a^2c^2+40ac+2<2b^3c^3+12b^2c^2+20bc.
\end{equation}
If $16a^3c^3<12b^2c^2$ then $c<0.75\frac{b^2}{a^3}$. On the other hand, if $16a^3c^3\geq 12b^2c^2$ then 
$2a^4c^4<2b^3c^3$. In each case, inequality  
\begin{equation}\label{v9w7_tplus_prva_njdk}a^4c<b^3
\end{equation}
holds. Since $c>10^5$, $b>46a$ and since $c>ab$ we have $b>a^{5/2}$ and $c>a^{7/2}$.
It is easily shown that $cr+st>632r^2$. We can use this to see that
\begin{align*}
\frac{1}{2}(cr+ st)(a^4c^4+7a^3c^3)+cr\cdot a^3c^3&<\frac{1}{2}(cr+ st)a^4c^4\left(1+\frac{2}{632ar}+\frac{7}{ac} \right),
\end{align*}
from which we get an upper bound on $v_9$. Now, from $v_9=w_7$ we have
\begin{equation}\label{v9w7_tplus_druga_njdk}b^3<1.001a^4c.
\end{equation}
Notice that $c>b^3 1.001^{-1}a^{-4}>0.999b^{7/5}>9.99\cdot 10^6$.

If we consider $v_9=w_7$ modulo $c^2$ and use the fact that $st(cr-st)\equiv 16 \ (\bmod \ c)$ it yields a congruence 
$$2r(cr-st)\equiv 16(6b-10a)\ (\bmod \ c).$$
Since $2r(cr-st)<4.01c$ and $16(6b-10a)<96b<96\cdot (1.001a^4c)^{1/3}<97c^{5/7}<c$, we have that one of the equalities
$$kc=2r(cr-st)-16(6b-10a),\quad k\in\{0,1,2,3,4\}$$
must hold. 

If $k=0$, we have $2r(cr-st)=16(6b-10a)$. The inequality $2r(cr-st)>3.8r^2>3.8ab$ implies $96b>16(6b-10a)>3.8ab$ so $a\leq 25 $. Now, we have
\begin{align*}
cr-st&>\frac{2c^2-2ac-2bc-8}{cr}>\frac{2\frac{b^3}{25^4\cdot 1.001}-(2b+50+1)}{\sqrt{25b+4}}\\
&>\frac{b(0.00000255b^2-2)-51}{b^{1/2}\sqrt{26}}>0.0000005b^{5/2},
\end{align*}
so $16(6b-10a)>2\sqrt{ab+4}\cdot 0.0000005b^{5/2}$. For each $a\in[1,25]$, we get from this inequality a numerical upper bound on $b$ which is in a contradiction to $b>10^5$.

If $k\neq 0$, we have a quadratic equation in $c$ 
with possible solutions
$$c_{\pm}=\frac{-B\pm\sqrt{B^2-4AE}}{2A}$$
where $A=r^2(16-4k)+k^2>0$, $B=-(32(2r^2-6)(6b-10a)+16r^2(a+b))<0$ and $E=64(4(6b-10a)^2-r^2)>0$.

If $k\leq 3$, $A> 4r^2$ and $c_{\pm}<\frac{-B}{A}<\frac{32\cdot2r^2+6b+16r\cdot32b}{4r^2}=100b$. Since $c>ab+a+b$, we have $a\leq 98$ and from $b^3<1.001a^4c<100.1a^4b$ we have $b<100.1^{1/2}a^2<96089$ which is in a contradiction to $b>10^5$.

In only remains to check if $k=4$. But in this case we express $b$ in the terms of $a$ and $c$ and get
$$b_{\pm}=\frac{-B\pm\sqrt{B^2+4AE}}{2A}$$
where $A=400ac-9216>0$, $B=c(16a^2-640a+832)+64a+30720$ and $E=16c^2+1216ac+25600a^2-256>0$.

We have $B^2+4AE>4AE>25536ac^3$, so it is not hard to see that $b_{-}<0$. Also, $B<0$ when $a\in[2,38]$, and $B>0$ otherwise. When $B>0$, $b_+<\frac{\sqrt{B^2+4AE}}{2A}<\frac{\sqrt{256a^4c^2+400ac^2(16c+2000a)}}{798ac}$ and since $a<c^{2/7}$, we get $b_+<\frac{84a^{1/2}c^{3/2}}{798ac}<0.106c^{1/2}a^{-1/2}$. On the other hand, $b_+>\frac{\sqrt{25536ac^3}-16a^2c}{800ac}>0.18c^{1/2}a^{-1/2}$, which is a contradiction to the previous inequality. 
In the last case, when $B<0$, we have $|B|<7232c$ and get a similar contradiction.\qedhere
\end{enumerate}
\end{proof}

\section{Proof of Theorem \ref{teorem_izbacen_slucaj}}
 
\begin{lemma}\label{lema_uvodna_teorem_izbacen}
Let us assume that $c>0.243775a^2b^{3.5}$ and that $z=v_m=w_n$ has a solution for $m$ and $n$, $n\geq 4$. Then $m\equiv n \ (\bmod \ 2)$ and
\begin{enumerate}[i)]
\item if $m$ and $n$ are even and $b\geq 2.21a$, then $$n>0.45273b^{-9/28}c^{5/28},$$
and if $b<2.21a$ then 
$$n>\min \{0.35496a^{-1/2}b^{-1/8}c^{1/8},0.177063b^{-11/28}c^{3/28}\};$$
\item if $m$ and $n$ are odd, then
$$n>0.30921b^{-3/4}c^{1/4}.$$
\end{enumerate}
\end{lemma}
\begin{proof}
Since from $b>10^5$ and $c>0.243775a^2b^{3.5}$ we have $c>ab^2$ and $\tau^{-4}<b/a$, and cases $i)$ and $ii)$ from Lemma \ref{lema_tau} cannot hold. So, we see that the only options from Lemma \ref{lemma_fil_pocetni} are $i)$, when $|z_0|=2$ or $|z_0|<1.219b^{-5/14}c^{9/14}$, and $iv)$. In each case we have $m\equiv n \ (\bmod \ 2)$.

First, let us consider the case  $|z_0|<1.219b^{-5/14}c^{9/14}$, $z_0=z_1$ and $m$ and $n$ even. Since $(z_0,x_0)$ satisfies an equality
$$cx_0^2-az_0^2=4(c-a)$$
we have 
$$x_0^2\leq \frac{a}{c}\cdot 1.2197^2b^{-5/7}c^{9/7}+4\left(1-\frac{a}{c} \right)<1.7109ac^{2/7}b^{-5/7},$$
where we have used the estimate $c^2b^{-5}>0.243775^2b^2>5.9426\cdot 10^8$. So,
\begin{equation}\label{gornja_xo}
x_0<1.30802a^{1/2}b^{-5/14}c^{1/7}.
\end{equation}
Similarly, we get 
\begin{equation}\label{gornja_y1}
y_1<1.21972b^{1/7}c^{1/7}.
\end{equation}
From \cite[Lemma 12]{fil_xy4} we have that the next congruence holds
\begin{equation} \label{kongruencija} az_0m^2-bz_1n^2\equiv ty_1n-sx_0m\ (\bmod \ c).
\end{equation}

From $b>10^5$ and $c>0.243775b^{3.5}$ we have $c>b^{3.377}$, so we can use $\varepsilon=3.377$ in the inequality from Lemma \ref{lema_epsilon_m_n} and get 
$$m<1.2975n+0.9811.$$
This implies that the inequality $m<1.34n$ holds for every possibility $m$ and $n$ even, except for $(m,n)=(6,4)$, which we will study separately. 

Now we study the case where $b\geq 2.21a$ and we assume to the contrary, that $n\geq 0.45273b^{-9/28}c^{5/28}$. Then from $c>b>10^5$ we have
\begin{align*}
am^2|z_0|&<a\cdot 1.34^2\cdot 0.45273^2 b^{-9/14}c^{5/14}\cdot 1.2197b^{-5/14}c^{9/14}\\
&<a\cdot 0.4489b^{-1}c<\frac{c}{4},
\end{align*}
\begin{align*}
bn^2|z_0|&<b\cdot 0.45279^2\cdot b^{-9/14}c^{5/14}\cdot 1.2197b^{-5/14}c^{9/14}<\frac{c}{4},
\end{align*}
and from inequalities (\ref{gornja_xo}) and (\ref{gornja_y1}) we also have
\begin{align*}
ty_1n&<(bc+4)^{1/2}1.2197b^{1/7}c^{1/7}\cdot 0.45273b^{-9/28}c^{5/28}\\
&<\frac{2.21b^{1/2}c^{1/2}}{c^{19/28}b^{5/28}}\cdot \frac{c}{4}=\frac{2.21}{c^{5/28}b^{-9/28}}\cdot \frac{c}{4}<\frac{2.85}{a^{5/14}b^{17/56}}\frac{c}{4}<\frac{c}{4}, 
\end{align*}
and similarly
\begin{align*}
sx_0m&<(ac+4)^{1/2}\cdot 1.34n\cdot x_0<0.79361ab^{-19/28}c^{23/28}<\frac{c}{4}.
\end{align*}
In the case $(m,n)=(6,4)$ we can prove the same final inequalities. So, from congruence (\ref{kongruencija}) we see that the equation
\begin{equation}\label{eq1}
az_0m^2+sx_0m=bz_0n^2+ty_1n
\end{equation}
must hold.

On the other hand, from equation $az_0^2-cx_0^2=4(a-c)$, since $|z_0|\neq 2$ in this case, and $c\mid (z_0^2-4)$ we have $z_0^2\geq c+4$. Let us assume that $z_0^2<\frac{5c}{a}$, then we would have $c(x_0^2-9)+4a<0$ and since $c>4a$ we must have $x_0=2$ and $|z_0|=2$, which is not our case. So, here we have $z_0^2\geq \max\{c+4,\frac{5c}{a}\}$.

Now, 
\begin{align*}
0\leq \frac{sx_0}{a|z_0|}-1&=\frac{4x_0^2+4ac-4a^2}{a|z_0|(sx_0+a|z_0|)}\leq \frac{2x_0^2}{a^2z_0^2}+\frac{2ac-2a^2}{a^2z_0^2}\\
&<2\left(\frac{1}{ac}+4\left(1-\frac{a}{c} \right)\frac{1}{a^2z_0^2} \right)+\left( \frac{1}{z_0^2}\left(2\frac{c}{a}-2 \right)\right)\\
&<\frac{18}{5ac}+\left(\frac{2}{5}-\frac{2a}{5c}\right)\leq 0.000036+0.4=0.400036, 
\end{align*}
and similarly
$$0\leq \frac{ty_1}{b|z_0|}-1<0.00004.$$

When $z_0>0$, i.e.\@ $z_0>2$, we have 
$$bz_0n(n+1)<bz_0n(n+\frac{ty_1}{bz_0})=az_0m(m+\frac{sx_0}{az_0})< az_0m(m+1.400036).$$

Since $m\geq 4$ we have from the previous inequality that
$2.21n(n+1)<1.35001m^2<1.35001(1.2975n+0.9811)$ must hold. But then we get $n<1$, an obvious contradiction. 

On the other hand, if $z_0<0$, i.e. $z_0<-2$, we similarly get
\begin{equation}\label{njdk:1}
am(m-1)>bn(n-1.00004).\end{equation}
Since, $m<1.2975n+0.9811$ and $b>2.21a$ we have $n\leq 6$.
If $b\geq 2.67a$, we would have $n<4$. So, it only remains to study the case $2.21a\leq b<2.67a$ and $n\leq6$. In that case we have $c>0.243775a^2c^{3.5}>0.03419b^{5.5}$, so we can put $\varepsilon=5.2$ in Lemma \ref{lema_epsilon_m_n} and get $m<1.1936n+1.0226$. Inserting in the inequality (\ref{njdk:1}) yields that only the case $(m,n)=(4,4)$ remains, but it doesn't satisfy equation (\ref{njdk:1}).

Now, let us study the case where $b<2.21a$. 
We again consider the congruence (\ref{kongruencija}), which after squaring and using $z_0^2\equiv t^2 \equiv s^2 \equiv 4 \ (\bmod \ c)$ yields a congruence
\begin{equation}\label{kongr2}
4((am^2-bn^2)^2-y_1^2n^2-x_0^2m^2)\equiv -2tsx_0y_1mn\ (\bmod \ c).
\end{equation} 
Let us denote $C=4((am^2-bn^2)^2-y_1^2n^2-x_0^2m^2)$, and (\ref{kongr2}) multiplied by $s$ and by $t$ respectively shows that
\begin{eqnarray}
Cs&\equiv& -8tx_0y_1mn \ (\bmod \ c)\label{cs}\\
Ct&\equiv& -8sx_0y_1mn \ (\bmod \ c).\label{ct}
\end{eqnarray}

Now, assume that $n\leq \min \{0.35496a^{-1/2}b^{-1/8}c^{1/8}, 0.177063b^{-11/28}c^{3/28}\}.$ Then also $n\leq 0.45273b^{-9/28}c^{5/28}$ holds, so we again have an equality in the congruence (\ref{kongruencija}), i.e.\@
$$az_0m^2+sx_0m=bz_0n^2+ty_1n.$$
We have that $x_0^2<y_1^2<1.4877b^{2/7}c^{2/7}.$ Since $b<2.21a$ we have $c>0.04991b^{5.5}$so we can take $\varepsilon=5.239$ in Lemma \ref{lema_epsilon_m_n} and we get $m<1.1921n+1.0232$, and since we know $m,n\geq 4$ and $m$ and $n$ even, we also have $m<1.34n$ from this inequality. This yields
\begin{align*}
|Cs|<|Ct|&=|4t((am^2-bn^2)^2-(y_1^2n^2+x_0^2m^2))|<\max\{4tb^2m^4,8ty_1^2m^2\}\\
&<\max\{12.896718b^2n^2\sqrt{bc+4},21.370513b^{2/7}c^{2/7}n^2\sqrt{bc+4}\}.
\end{align*}
On the other hand, we have from our assumption on $n$ that 
$$12.896718b^2n^2\sqrt{bc+4}<12.896718\left(\frac{b}{a}\right)^2 0.35496^4\cdot 1.000001c^{1/2}<c,$$
and
$$21.370513b^{2/7}c^{2/7}n^2\sqrt{bc+4}<21.370513\cdot0.177063^2\cdot 1.000001c<c,$$
so, $|Cs|<|Ct|<c$. On the other hand, $8sx_0y_1mn<8tx_0y_1mn$ and
\begin{align*}
8tx_0y_1mn&<8ty_1^2\cdot 1.34n^2\\
&<8\cdot 1.000001b^{1/2}c^{1/2} 1.4877b^{2/7}c^{2/7}\cdot 1.34\cdot 0.177063^2b^{-11/14}c^{3/14}\\
&<0.5c<c,
\end{align*}
So, in congruences (\ref{cs}) and (\ref{ct}) we can only have $$Cs=kc-8tx_0y_1mn,\quad Ct=lc-8sx_0y_1mn,\quad k,l\in\{0,1\}.$$
If $k=l=0$, we would have $s=t$, which is not possible. When $k=l=1$, we get $c=8(t+s)x_0y_1mn<0.5c+0.5c<c$, a contradiction. In the case $k=0$ and $l=1$ we get $cs=8(s^2-t^2)x_0y_1mn<0$, and in the case $k=1$ and $l=0$, we have $ct=8(t^2-s^2)x_0y_1mn$, so $c(t-s)<8(t^2-s^2)x_0y_1mn$, which leads to a contradiction as in the case $k=l=1$.

So, $n> \min \{0.35496a^{-1/2}b^{-1/8}c^{1/8},0.177063b^{-11/28}c^{3/28}\}$ must hold. 

\medskip
It remains to consider the case when $m$ and $n$ are odd. In this case, a congruence from \cite[Lemma 3]{fil_xy4} holds,
\begin{eqnarray}
\pm 2t(am(m+1)-bn(n+1))&\equiv& 2rs(n-m)\ (\bmod \ c),\label{kong3}\\
\pm 2s(am(m+1)-bn(n+1))&\equiv& 2rt(n-m)\ (\bmod \ c).\label{kong4}
\end{eqnarray}
Let us assume that $n\leq 0.30921b^{-3/4}c^{1/4}$. In this case we have $m<1.2975n+0.9811$ and  since $m\geq n\geq 5$ are both odd we also have $m<1.47n$. Notice that
$2t(am(m+1)-bn(n+1))<2tbm(m+1)$ holds. Also, from $c>0.243775b^{3.5}>7.7\cdot 10^{16}$ we have $b<0.243775^{-2/7}c^{2/7}$. So it suffice to observe that  
\begin{align*}
2tbm(m+1)&<2\sqrt{bc}\cdot 1.000001b\cdot 1.21m^2\\
&<5.22944b^{3/2}c^{1/2}<5.22944(0.243775^{-2/7}c^{2/7})^{3/2}c^{1/2}\\
&<7.82711c^{13/14}<0.5c,\\
2rt(m-n)&<2\sqrt{ab}\left(1+\frac{1}{\sqrt{ab}}\right)\sqrt{bc}\left(1+\frac{1}{\sqrt{bc}}\right)(0.2962n+0.85194)^2\\
&<2a^{1/2}bc^{1/2}\cdot 1.00318\cdot 0.51^2 \cdot 0.30921^2 b^{-3/2}c^{1/2}\\
&<0.0499(a/b)^{1/2}c
\end{align*}
which means that we have equalities in congruences (\ref{kong3}) and (\ref{kong4}) and implies 
$$\pm \frac{2rs}{t}(n-m))=\pm \frac{2rt}{s}(n-m). $$
Since $s\neq t$, the only possibility is $n=m$, but then $2t(am(m+1)-bn(n+1))=0$ implies $a=b$, a contradiction. So, our assumption for $n$ was wrong and $n> 0.30921b^{-3/4}c^{1/4}$ when $n$ and $m$ are odd.
\end{proof}

Various versions of Rickert's theorem from \cite{rickert} and results derived from them are often used when considering problems of $D(1)$-$m$-tuples and $D(4)$-$m$-tuples. In this paper we will use a lemma from \cite{bf_parovi} and give a new version of that result, which improves it in some special cases.
 
\begin{lemma}\cite[Lemmas 6 and 7]{bf_parovi}\label{lema_rickert}
Let $\{a,b,c,d\}$ be a $D(4)$-quadruple, $a<b<c<d$ and $c>308.07a'b(b-a)^2a^{-1}$, where $a'=\max\{4a,4(b-a)\}$. Then
$$n<\frac{2\log(32.02aa'b^4c^2)\log (0.026ab(b-a)^{-2}c^2)}{\log(0.00325a(a')^{-1}b^{-1}(b-a)^{-2}c)\log(bc)}.$$
\end{lemma}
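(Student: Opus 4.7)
The plan is to derive the bound from Rickert's theorem \cite{rickert} on simultaneous effective irrationality measures for pairs of algebraic numbers of the form $\sqrt{1+\alpha_i/N}$, constructed via Pad\'e approximations realised by contour integrals. The idea, standard in the literature on Diophantine $m$-tuples, is that the quadruple $\{a,b,c,d\}$ produces exceptionally good simultaneous rational approximations to two such algebraic numbers, and the contradiction between this fact and Rickert's effective lower bound forces $n$ to be small.

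First, I would specialise Rickert's setup to the algebraic numbers $\theta_1=\sqrt{1+4a/(cd)}$ and $\theta_2=\sqrt{1+4b/(cd)}$, both having common denominator $N=cd$. The Diophantine identities $ad+4=x^2$, $bd+4=y^2$, $cd+4=z^2$ give
\begin{equation*}
(x\sqrt{c})^2-(z\sqrt{a})^2=4(c-a),\qquad (y\sqrt{c})^2-(z\sqrt{b})^2=4(c-b),
\end{equation*}
so that $|x\sqrt{c}-z\sqrt{a}|=O((c-a)/(d\sqrt{ac}))$ and similarly for the $b$-pair. After rationalising, one obtains that $x/z$ and $y/z$ are simultaneous rational approximants with common denominator $q\sim z$ to $\sqrt{a/c}\,\theta_1$ and $\sqrt{b/c}\,\theta_2$, with errors of order $1/d\sim 1/q^2$.

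Second, I would invoke Rickert's quantitative theorem, which under smallness of $\alpha_i/N$ yields an effective lower bound $\max_i|\theta_i-p_i/q|>C\,q^{-\lambda}$ with explicit $\lambda<2$. The threshold $c>308.07\,a'b(b-a)^2a^{-1}$ is precisely what guarantees $\log(0.00325\,a(a')^{-1}b^{-1}(b-a)^{-2}c)>0$, equivalently $\lambda<2$; the numerical constants $32.02$, $0.026$, $0.00325$ arise by tracking the suprema of the hypergeometric contour-integral numerators, the common denominators of the Pad\'e polynomials, and the normalisations dictated by $\alpha_1=4a$, $\alpha_2=4b$ (this is where the dependence on the auxiliary quantity $a'=\max\{4a,4(b-a)\}$ enters, as it controls the appropriate balance in the Pad\'e construction).

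Third, combining the two bounds with the recurrence estimate $w_n>\tfrac{c}{2y_1}(t-1)^{n-1}$ from the proof of Lemma \ref{lemaw4w5w6}, so that $q\sim z=w_n$ and $\log q\asymp \tfrac{n}{2}\log(bc)$, the inequality $q^{2-\lambda}\ll\text{const}$ becomes
\begin{equation*}
n\log(bc)\cdot \log(0.00325\,a(a')^{-1}b^{-1}(b-a)^{-2}c)<2\log(32.02\,aa'b^4c^2)\cdot\log(0.026\,ab(b-a)^{-2}c^2),
\end{equation*}
which on rearranging is exactly the claimed bound. The main obstacle is the bookkeeping of the numerical constants through Rickert's hypergeometric construction for the specific $D(4)$ data; verifying that $c>308.07\,a'b(b-a)^2a^{-1}$ is the precise threshold rendering $\lambda<2$ is the delicate part, while the rest is a routine squeeze of exponents. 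The full proof is essentially identical to that in \cite[Lemmas 6 and 7]{bf_parovi} and is omitted, as the paper remarks.
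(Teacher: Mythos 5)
This lemma is not proved in the paper at all: it is quoted verbatim from \cite[Lemmas 6 and 7]{bf_parovi}, so there is no internal proof to compare against. Your outline does reproduce the standard two-step strategy of the cited source (a Rickert-type simultaneous-approximation theorem with explicit constants, combined with the good approximations supplied by the quadruple and the growth estimate for $w_n$ so that $\log q \asymp n\log(bc)$), and your observation that the hypothesis $c>308.07\,a'b(b-a)^2a^{-1}$ is exactly what makes $\log\bigl(0.00325\,a(a')^{-1}b^{-1}(b-a)^{-2}c\bigr)$ positive, i.e. the effective exponent $\lambda<2$, is correct (indeed $0.00325^{-1}\approx 307.7$).

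However, the concrete setup you propose would not work as written. Taking $\theta_1=\sqrt{1+4a/(cd)}$, $\theta_2=\sqrt{1+4b/(cd)}$ with $N=cd$ gives $\theta_1^2=(cd+4a)/(cd)$, and $cd+4a$ has no algebraic relation to the quadruple; the useful identities are of the shape $c(ad+4)=a(cd+4)+4(c-a)$ and $c(bd+4)=b(cd+4)+4(c-b)$, which force the numerators $a_i$ to involve $c-a$, $c-b$ (compare $a_1=4a(c-b)$, $a_2=4b(c-a)$, $N=abz^2$ in Proposition \ref{padeova_tm8.1} of this paper, where $a_2-a_1=4c(b-a)$ is precisely what produces the quantity $a'=\max\{4a,4(b-a)\}$ up to the factor $c$). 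Moreover, with your choice $N=cd$ the Rickert exponent $\lambda$ and the accompanying constant would depend on $d$, whereas the stated bound on $n$ requires $\lambda$ and all constants to be expressions in $a,b,c$ alone so that the inequality $q^{2-\lambda}<\mathrm{const}$ can be converted into $n\log(bc)<$ ($d$-free quantity). So while the strategy is the right one, the specific identification of the approximated numbers and of $N$ is a genuine gap; the correct bookkeeping is the content of \cite[Lemmas 6 and 7]{bf_parovi} and cannot be bypassed by the choice you made.
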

By combining results and proofs of \cite[Lemma 7]{bf_parovi} and \cite[Lemma 3.3]{cf}, the next generalization of \cite[Lemma 7]{nas2} can be proved.

\begin{lemma}\label{lema_rickert2}
Let $\{a,b,c,d\}$ be a $D(4)$-quadruple, $a<b<c<d$, $b>10^5$ and $c>59.488a'b(b-a)^2a^{-1}$, where $a'=\max\{4a,4(b-a)\}$. If $z=v_m=w_n$ for some $m$ and $n$ then
$$n<\frac{8\log(8.40335\cdot 10^{13} a^{1/2}(a')^{1/2}b^2c)\log (0.20533a^{1/2}b^{1/2}(b-a)^{-1}c)}{\log(bc)\log(0.016858a(a')^{-1}b^{-1}(b-a)^{-2}c)}.$$

\end{lemma}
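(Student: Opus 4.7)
The plan is to derive Lemma~\ref{lema_rickert2} as a numerical refinement of \cite[Lemma 7]{bf_parovi}, in which the underlying Rickert-type input is replaced by the sharper variant developed in \cite[Lemma 3.3]{cf} and the numerical constants are tightened using the lower bound $b>10^{5}$ from Lemma~\ref{b10na5}. I would first reproduce the setup of \cite[Lemma 7]{bf_parovi}: from the Pellian equations (\ref{prva_pelova_s_a})--(\ref{druga_pelova_s_b}), the triple $(x,y,z)$ with $z=v_m=w_n$ yields simultaneous rational approximations
\[
\left|\sqrt{\frac{a}{c}}-\frac{x}{z}\right| \leq \frac{2a'}{cz^{2}}, \qquad \left|\sqrt{\frac{b}{c}}-\frac{y}{z}\right| \leq \frac{2(b-a)}{cz^{2}},
\]
to $\sqrt{a/c}$ and $\sqrt{b/c}$, the prefactors coming from a direct computation using $ac+4=s^{2}$ and $bc+4=t^{2}$.

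Next I would invoke the hypergeometric lower bound in the form of \cite[Lemma 3.3]{cf}: under a hypothesis of the shape $c>C\cdot a'b(b-a)^{2}/a$, it asserts that for any positive integers $p,q,r$,
\[
\max\!\left\{\left|\sqrt{\frac{a}{c}}-\frac{p}{r}\right|,\ \left|\sqrt{\frac{b}{c}}-\frac{q}{r}\right|\right\} > K_{1}\,r^{-\lambda},
\]
where $\lambda=1+\log\alpha_{1}/\log\alpha_{2}<2$ and $K_{1},\alpha_{1},\alpha_{2}$ are explicit functions of $a,a',b,c,b-a$. Because the Cipu--Fujita formulation exploits the hypergeometric auxiliary polynomials more efficiently than the original one used in \cite{bf_parovi}, and because the assumption $b>10^{5}$ permits discarding negligible terms of order $1/b$ throughout, the admissible threshold for $c$ decreases from $308.07\cdot a'b(b-a)^{2}/a$ down to $59.488\cdot a'b(b-a)^{2}/a$.

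Comparing the upper Pellian estimates with this lower bound applied to $(p,q,r)=(x,y,z)$ produces, after elementary manipulations, an inequality of the form $z^{2-\lambda}<K_{2}$, and writing $2-\lambda=\log(\alpha_{2}/\alpha_{1})/\log\alpha_{2}$ gives
\[
\log z<\frac{\log K_{2}\cdot\log\alpha_{2}}{\log(\alpha_{2}/\alpha_{1})}.
\]
Since $z=w_{n}$ and the proof of Lemma~\ref{lemaw4w5w6} supplies the lower bound $w_{n}>\frac{c}{2y_{1}}(t-1)^{n-1}$, one obtains $n<\frac{2\log z}{\log(bc)}+O(1)$; substituting the explicit forms of $K_{2}$, $\alpha_{1}$ and $\alpha_{2}$ in terms of $a,a',b,c,b-a$ yields the claimed bound, the coefficient $8$ in the numerator reflecting two squarings intrinsic to the Rickert construction together with the passage from $\log z$ to $n$.

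The principal difficulty is purely numerical bookkeeping: the three inputs---the upper Pellian estimate, the Rickert-type lower bound, and the growth estimate on $w_{n}$---each contribute explicit constants that must be tracked consistently, and the final coefficients $8.40335\cdot 10^{13}$, $0.20533$ and $0.016858$ arise only after all negligible terms have been estimated using $b>10^{5}$. Conceptually no new ideas are needed beyond a careful transcription of the arguments of \cite[Lemma 7]{bf_parovi} and \cite[Lemma 3.3]{cf}; the novelty lies entirely in propagating the improved numerical inputs through the chain of estimates.
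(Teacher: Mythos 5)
Your outline is exactly the route the paper intends: the paper states this lemma without proof, saying only that it is obtained by combining \cite[Lemma 7]{bf_parovi} with \cite[Lemma 3.3]{cf} in the manner of \cite[Lemma 7]{nas2}, and your three-step scheme (Pellian approximation estimates, Rickert-type lower bound with exponent $\lambda<2$, conversion of $\log z$ into a bound on $n$ via the growth of $w_n$) is precisely that argument. Two details in your transcription would need repair before the computation goes through. First, the Cipu--Fujita lemma does not apply to $\sqrt{a/c}$ and $\sqrt{b/c}$; the approximated irrationals must be taken in the normalized form $\theta_i=\sqrt{1+a_i/N}$ with $a_1,a_2$ proportional to $a(c-b)$, $b(c-a)$ and $N$ a large quantity built from $a,b,c$ and $z$ (compare the paper's Proposition \ref{padeova_tm8.1} for the two-solution analogue), and the approximants are ratios such as $sy/(\,\cdot\,z)$, $tx/(\,\cdot\,z)$ rather than $x/z$, $y/z$; your stated prefactors $2a'/(cz^2)$ and $2(b-a)/(cz^2)$ do not come out of the computation you describe for $\sqrt{a/c}$, whose approximation constant is of order $\sqrt{c/a}$. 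Second, the constant $8.40335\cdot10^{13}$ does not arise from ``two squarings'': it comes from the largeness hypothesis on $N$ in \cite[Lemma 3.3]{cf} combined with $b>10^5$ entering the quantity inside the first logarithm, while the factor $8$ (versus $2$ in Lemma \ref{lema_rickert}) reflects rewriting the squared arguments of the logarithms as squares and the $\tfrac12$ in $h(\theta_i)$. With those corrections the plan is sound and reduces, as you say, to numerical bookkeeping.
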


\begin{proposition}\label{propozicija_d+}
Let $\{a,b,c,d\}$ be a $D(4)$-quadruple such that $a<b<c<d$ and that equation $z=v_m=w_n$ has a solution for some $m$ and $n$. If $b\geq 2.21a$ and $c>\max\{0.24377a^2b^{3.5},2.3b^5\}$ or $b<2.21a$ and $c>1.1b^{7.5}$ then $d=d_+$.
\end{proposition}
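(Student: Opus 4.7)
The strategy is to assume $d>d_+$, which the standing convention at the start of Section 3 permits, and to reach a contradiction by playing the polynomial-in-$b$ lower bound on the index $n$ from Lemma \ref{lema_uvodna_teorem_izbacen} against the essentially logarithmic (hence almost constant) upper bound from Lemma \ref{lema_rickert2}. The first step is to verify that our hypotheses activate both lemmas. The condition $c>0.243775\,a^{2}b^{3.5}$ required by Lemma \ref{lema_uvodna_teorem_izbacen} is implied by $c>2.3\,b^{5}$ in the regime $b\geq 2.21a$ (since $a\leq b/2.21$) and even more easily by $c>1.1\,b^{7.5}$ in the regime $b<2.21a$. The threshold $c>59.488\,a'b(b-a)^{2}/a$ needed for Lemma \ref{lema_rickert2} is bounded by $238\,b^{4}/a$ via $a'\leq 4b$ and $(b-a)^{2}\leq b^{2}$, which is dwarfed by both of our $c$-bounds once $b>10^{5}$ (Lemma \ref{b10na5}).

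I would then extract lower bounds on $n$ from Lemma \ref{lema_uvodna_teorem_izbacen}. In the odd-parity case $n>0.30921\,b^{-3/4}c^{1/4}$, so $c>2.3\,b^{5}$ gives $n\gtrsim b^{1/2}$ and $c>1.1\,b^{7.5}$ gives $n\gtrsim b^{7/8}$. In the even-parity case with $b\geq 2.21a$, the bound $n>0.45273\,b^{-9/28}c^{5/28}$ combined with $c>2.3\,b^{5}$ yields $n\gtrsim b^{4/7}$. In the even-parity case with $b<2.21a$, the relevant entry of the minimum, $n>0.177063\,b^{-11/28}c^{3/28}$, combined with $c>1.1\,b^{7.5}$, yields $n\gtrsim b^{11.5/28}$. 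With $b>10^{5}$ each of these lower bounds is at least of order several hundreds, and in the odd regime it is of order $10^{4}$ or much more.

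On the other hand, Lemma \ref{lema_rickert2} bounds $n$ above by $8\log A_{1}\log A_{2}/(\log B_{1}\log B_{2})$, where each logarithmic factor is $\log c$ plus a subexponential adjustment. Using the standing inequality $c<7b^{11}$ from \cite[Lemma 8]{sestorka} to control the ratios $\log A_{i}/\log B_{j}$ uniformly, the right-hand side is bounded above by an absolute numerical constant—by direct estimation, of the order of a few dozens in the worst regime. In every parity/size subcase this uniform upper bound is incompatible with the polynomial-in-$b$ lower bound produced in the previous paragraph, so the assumption $d>d_+$ is refuted and $d=d_+$.

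The main obstacle is the tightness of the comparison in the worst subcase, namely $b<2.21a$ with $m,n$ both even at $b$ near the lower limit $10^{5}$, where the Rickert upper bound and the lower bound $n\gtrsim b^{11.5/28}$ are of the same numerical order of magnitude. Here it is essential to use the strengthened $c$-threshold $c>1.1\,b^{7.5}$ (rather than a crude $c>b^{5}$) together with the lower bound $b-a\geq 57\sqrt{a}$ from Lemma \ref{b_a_57sqrt}, which keeps the factors $(b-a)^{-1}$ and $(b-a)^{-2}$ in $A_{2}$ and $B_{2}$ from blowing up. Once these refined estimates are inserted, the numerical contradiction becomes unambiguous and the proposition follows.
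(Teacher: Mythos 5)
Your overall strategy --- assume $d>d_+$ and pit the polynomial-in-$(b,c)$ lower bounds on $n$ from Lemma \ref{lema_uvodna_teorem_izbacen} against a Rickert-type logarithmic upper bound --- is exactly the paper's. The gap is in your choice of Lemma \ref{lema_rickert2} as the upper-bound tool: it does not close the subcase you yourself flag as critical. For $m,n$ even and $b<2.21a$ the operative lower bound is $n>0.177063\,b^{-11/28}c^{3/28}$, which at the worst point $b=10^5$, $c=1.1b^{7.5}$ evaluates to about $20.2$ (not ``several hundreds''; the exponent is $11.5/28\approx 0.41$, so $b^{11.5/28}\approx 113$ before the small constant). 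Evaluating the right-hand side of Lemma \ref{lema_rickert2} at the same point in the sub-subcase $b<2a$, with $a'=4a$, $a^{1/2}(a')^{1/2}<2b$, $b-a\geq 57\sqrt{a}$ and $(b-a)^{-2}>4b^{-2}$, gives roughly $8\cdot 153.7\cdot 86.6/(98.0\cdot 47.8)\approx 22.7$. Since $20.2<22.7$ there is simply no contradiction: the constant $8.40335\cdot 10^{13}$ contributes $\approx 32$ to the first numerator logarithm and, together with the overall prefactor $8$, makes Lemma \ref{lema_rickert2} strictly weaker than Lemma \ref{lema_rickert} at this scale. The paper uses Lemma \ref{lema_rickert} here and explicitly remarks that it ``will give better results in these cases''; its right-hand side evaluates to about $20.1$ at the same point, beating $20.2$ by well under one percent --- which is why the paper's resulting thresholds ($b<81874$, $b<98413$) sit so close to $10^5$. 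Concretely, with the tool you chose the ``unambiguous numerical contradiction'' fails for $b$ between $10^5$ and roughly $1.3\cdot 10^5$, and no massaging of the $(b-a)$ factors repairs it; you must switch to Lemma \ref{lema_rickert} (or prove a stronger lower bound on $n$).

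Two secondary slips, neither fatal. First, for $b\geq 2.21a$ the condition $c>0.243775\,a^{2}b^{3.5}$ is \emph{not} implied by $c>2.3b^{5}$: taking $a$ near $b/2.21$ gives $0.243775\,a^{2}b^{3.5}\approx 0.05\,b^{5.5}$, which exceeds $2.3b^{5}$ for $b>10^5$. It is simply assumed via the $\max$ in the hypothesis, so nothing is lost, but the stated implication is false. Second, in the odd case with $c>1.1b^{7.5}$ the exponent is $-3/4+7.5/4=9/8$, not $7/8$; this only makes your bound an underestimate and is harmless.
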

\begin{proof}
Let us assume that $d>d_+$. Since $c>0.24377a^2b^{3.5}$ in both cases, we can use Lemma \ref{lema_uvodna_teorem_izbacen}, and as in the proof of that lemma, $m$ and $n$ have the same parity.  Also, since in any case $c>2.3b^5>308.07a'b(b-a)^2a^{-1}$, we can use Lemmas \ref{lema_rickert} and \ref{lema_rickert2} (note Lemma \ref{lema_rickert} will give better results in these cases).

Let us assume that $m$ and $n$ are even and $b\geq 2.21a$. Then we have $n>0.45273b^{-9/28}c^{5/28}$, $a'=\max\{4a,4(b-a)\}=4(b-a)$, $a\leq \frac{b}{2.21}$ and $b>b-a>\frac{1.21}{2.21}b$. We estimate
\begin{align*}
32.02aa'b^4c^2&=32.02a\cdot4(b-a)b^4c^2<57.955b^6c^2,\\
0.026ab(b-a)^{-2}c^2&<\frac{0.026}{2.21}b^2\frac{2.21^2}{1.21^2}b^{-2}c^2<0.0393c^2,\\
0.00325a(a')^{-1}b^{-1}(b-a)^{-2}c&>0.00325\cdot4^{-1}(b-a)^{-3}b^{-1}c>0.0008125b^{-4}c.
\end{align*} 
Now, from Lemma \ref{lema_rickert} we have an inequality
$$0.45273b^{-9/28}c^{5/28}<\frac{2\log(57.955b^6c^2)\log(0.0393c^2)}{\log(bc)\log(0.0008125b^{-4}c)},$$
where the right-hand side is decreasing in $c$ for $b>0$, $c>2.3b^5$ so we can observe the inequality in which we have replaced  $c$ with $2.3b^5$ which gives us $b<19289$, a contradiction to $b>10^5.$

Let us consider the case when $m$ and $n$ are even and $2a<b<2.21a$. Then $a'=4(b-a)$, $\frac{b}{2}<b-a<\frac{1.21}{2.21}b$. Denote 
\begin{align*}F\in&\{0.35496a^{-1/2}b^{-1/8}c^{1/8},0.177063b^{-11/28}c^{3/28}\}\\&>\{0.50799b^{9/16},0.17888b^{23/56}\}.\end{align*} Then by Lemma \ref{lema_rickert}
$$F<\frac{2\log(35.0627b^6c^2)\log(0.052c^2)}{\log(bc)\log(0.0022397b^{-3}c)}.$$
The right-hand side of this inequality is decreasing in $c$ for $b>0$, $c>1.1b^{7.5}$, and for each possibility for $F$ we get $b<722$ and $b<81874$ respectively, a contradiction in either case.
 
In the case when $m$ and $n$ are even and $b<2a$, we have $a'=4a$ and $57<b-a<\frac{b}{2}$. With $F$ defined as before, we have $F>\{0.35921b^{9/16},0.17888b^{23/56}\}$ and 
$$F<\frac{2\log(128.08b^6c^2)\log(0.0000081b^2c^2)}{\log(bc)\log(0.00325b^{-3}c)}.$$
Again, the right-hand side is decreasing in $c$ for $c>1.1b^{7.5}$. We get $b<1396$ for the first choice for $F$, and $b<98413$ for the second, again, a contradiction. 
\medskip 

It remains to consider the case when $m$ and $n$ are odd. If $b\geq 2a$ we have $a'=4(b-a)$ and $c>2.3b^5$, so similarly as before 
\begin{align*}
32.02aa'b^4c^2&<64.04b^6c^2,\\
0.026ab(b-a)^{-2}c^2&<0.052c^2,\\
0.00325a(a')^{-1}b^{-1}(b-a)^{-2}c&>0.0008125b^{-4}c.
\end{align*}
In this case we have $n>0.30921b^{-3/4}c^{1/4}$ so by Lemma \ref{lema_rickert} we observe an inequality
$$0.30921b^{-3/4}c^{1/4}<\frac{2\log(64.04b^6c^2)\log(0.052c^2)}{\log(bc)\log(0.0008125b^{-4}c)}.$$
Since the right-hand side is decreasing in $c$ for $c>2.3b^5$ we get $b<97144$, a contradiction. 

If $b< 2a$ and $c>1.1b^{7.5}$ we observe
$$0.30921b^{-3/4}c^{1/4}<\frac{2\log(128.08b^6c^2)\log(0.0000081b^2c^2)}{\log(bc)\log(0.00325b^{-3}c)}$$
and since the right-hand side is decreasing in $c$ for $c>1.1b^{7.5}$ we get $b<48$, a contradiction.
\end{proof}

\bigskip
\begin{proof}[Proof of Theorem \ref{teorem_izbacen_slucaj}]
Let us assume that $d=d_+$. Then Lemma \ref{regularna_rjesenja} gives us all of the possible fundamental solutions and indices.

Let us assume that $d>d_+$. Then from Lemma \ref{donje na m i n} we have $n\geq 4$. Here we only consider a possibility when $m$ and $n$ are even and $z_0=z_1\notin \left\{2,\frac{1}{2}(cr-st) \right\}$. Then from \cite{fil_xy4} we know that for $d_0=\frac{z_0^2-4}{c}<c$, $D(4)$-quadruple $\{a,b,d_0,c\}$ is irregular.

Denote $\{a_1,a_2,a_3\}=\{a,b,d_0\}$ such that $a_1<a_2<a_3$. 

If $a_3\leq 0.234775a_1^{2.5}a_2^{3.5}$ holds then by Lemma \ref{lema_d_vece} we also have an inequality $c>0.240725a_1^{4.5}a_3^{5.5}\geq 0.240725a^{4.5}b^{5.5}$. If $b>2.21a$, since $b>10^5$, this inequality implies  $c>76a^{4.5}b^{5.5}$. On the other hand, if $b\leq 2.21a$, we have $a\geq 45249$ and $c>0.240725\cdot 2.21^{-2}a^{2.5}b^{7.5}>10a^{2}b^{7.5}$. We see that in either case we can use Proposition \ref{propozicija_d+} and conclude $d=d_+$, i.e. we have a contradiction to the assumption $d>d_+$.

It remains to consider the case $a_3>0.234775a_1^{2.5}a_2^{3.5}$. If $b=a_2$ then from Lemma \ref{lemaw4w5w6} we get
$$c>0.249979b^{2.5}(0.243775b^{3.5})^{3.5}>0.00178799b^{14.75}.$$
It is easy to see that we again have conditions of Proposition \ref{propozicija_d+} satisfied and can conclude that $d=d_+$ is the only possibility.
On the other hand, if $b=a_3$, then $b>0.243775a_1^{2.5}a_2^{3.5}>0.243775\cdot 10^{17.5}$ since $a_2>10^5$. As such, by Lemma \ref{lema_uvodna_teorem_izbacen} we need to consider two cases. First, when $a_2\geq 2.21a_1 $ then $n'>0.45273a_2^{-9/28}b^{5/28}>0.45273(0.243775^{-2/7}b^{2/7})^{-9/28}b^{5/28}>0.3976b^{17/196}>11$. By Lemma \ref{lemaw4w5w6} it follows that $z'>w_6$ and $c>0.249969a_1^{4.5}b^{5.5}$ which, as before, yields a contradiction when Proposition \ref{propozicija_d+} is applied. The second case is when $a_2<2.21a_1$. If  $n'\geq 6$, we have $c>0.249969a_1^{4.5}b^{5.5}>b^{7.5}$, since $a_1>\frac{a_2}{2.21}>\frac{10^5}{2.21}$ and get the same conclusion as before. If $n<6$, by Lemma \ref{donje na m i n}, we see that we have $m'$ and $n'$ even, so $n'=4$. Since $b>0.0335a_2^6>a_2^{5.7}$ from Lemma \ref{lema_epsilon_m_n} we have that $m'<1.1766n'+1.0294$, i.e. $m'=4$. From the proof of \cite[Lemma 5]{sestorka} we know that $m'=n'=4$ can hold only when $|z_0'|<1.2197(b')^{-5/14}(c')^{9/14}$. As such, we have a $0<d_0'<b$ such that $\{a,d_0',d_0,b\}$ is an irregular $D(4)$-quadruple and we can use the same arguments to prove that such a quadruple cannot exist by Proposition \ref{propozicija_d+}, or we have a new quadruple with $0<d_0''<d_0'<b$. Since this process cannot be repeated infinitely, for some of those quadruples in the finite process we must have $n>6$, a contradiction to Proposition \ref{propozicija_d+}.

The last assertion of Theorem \ref{teorem_izbacen_slucaj} follows from Lemma \ref{lema_tau}.
\end{proof}

\section{Proofs of Theorems \ref{teorem1.5} and \ref{teorem1.6}}
A more general result for the lower bound on $m$ can be established by following an analogous argument as in \cite[Proposition 3.1]{nas} and \cite[Lemma 16]{nas2}.

\begin{lemma}\label{lema_donja_m_staro}
Let $\{a,b,c,d\}$  be a $D(4)$-quadruple with $a<b<c<d$ for which $v_{2m}=w_{2n}$ has a solution such that $z_0=z_1=\pm 2$ and $Ln\geq m \geq n \geq 2$ and $m\geq 3$, for some real number $L>1$. 

Suppose that $a\geq a_0$, $b\geq b_0$, $c\geq c_0$ and $b\geq \rho a$ for some positive integers $a_0$, $b_0$, $c_0$ and a real number $\rho>1$. Then
$$m>\alpha b^{-1/2} c^{1/2},$$
where $\alpha$ is any real number satisfying both inequalities
\begin{equation}\label{njdba_1}
\alpha^2+(1+2b_0^{-1}c_0^{-1})\alpha\leq 1
\end{equation}
\begin{equation}\label{njdba_2}
4\left(1-\frac{1}{L^2} \right)\alpha^2+\alpha(b_0(\lambda + \rho^{-1/2})+2c_0^{-1}(\lambda + \rho^{1/2}))\leq b_0
\end{equation}
with $\lambda=(a_0+4)^{1/2}(\rho a_0+4)^{-1/2}$.
\\Moreover, if $c^\tau \geq \beta  b$ for some positive real numbers $\beta$ and $\tau$ then
\begin{equation}\label{njdba_3}
m>\alpha \beta^{1/2}c^{(1-\tau)/2}.
\end{equation}
\end{lemma}

\begin{lemma}\label{lema_donja_m}
Let us assume that $c>b^4$ and $z=v_m=w_n$  has a solution for some positive integers $m$ and $n$. Then $m\equiv n \ (\bmod \ 2)$ and $n>0.5348b^{-3/4}c^{1/4}$. 
\end{lemma}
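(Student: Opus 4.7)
The plan is to adapt the proof of Lemma \ref{lema_uvodna_teorem_izbacen} to the sharper hypothesis $c>b^4$, exploiting the improved ratio $m<1.2513\,n+1$ that follows from Lemma \ref{lema_epsilon_m_n} with $\varepsilon=4$.

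First I would establish $m\equiv n\pmod 2$ by ruling out the two mixed-parity cases of Theorem \ref{teorem_izbacen_slucaj}. Under the standing convention $d>d_+$ stated at the beginning of this section (the alternative $d=d_+$ forces $n\leq 2$, while $0.5348\,b^{-3/4}c^{1/4}>2$ whenever $c>b^4$ and $b>10^5$), case (ii) is excluded by the \emph{Moreover} clause of Theorem \ref{teorem_izbacen_slucaj}. In case (iii), Lemma \ref{lema_tau}(i) forces $c<ab^2\tau^{-4}$; combined with $c>b^4$ this would require $\tau^{-4}>b^2/a>b>10^5$, contradicting the elementary bound $\tau^{-4}\le(1+4/(ab))^2(1-2b/c)^{-4}<1.001$ that is immediate from the definition of $\tau$ under $c>b^4$ and $b>10^5$.

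For the bound on $n$ I would treat the odd and even cases separately. In case (iv) (both odd), Proposition \ref{prop_m_n_9} gives $\min\{m,n\}\geq 9$, whence $(m+1)/m\le 10/9$, and the bound $m<1.2513\,n+1$ yields a tight case-by-case control of $m(m+1)/n^2$ over all admissible pairs. Supposing for contradiction $n\leq 0.5348\,b^{-3/4}c^{1/4}$, I would estimate both sides of the two congruences (\ref{kong3}) and (\ref{kong4}) using $2t<2.00001\sqrt{bc}$, $2r<2.00001\sqrt{ab}$, the above ratios, and $n^2\leq 0.286\,b^{-3/2}c^{1/2}$. Once the congruences are upgraded to exact equalities (possibly after handling a small $\pm c$ discrepancy explicitly), multiplying one by $s$ and the other by $t$ and subtracting eliminates $am(m+1)-bn(n+1)$ and forces $(s^2-t^2)(n-m)=0$, hence $m=n$; substituting back gives $\pm 2t(a-b)n(n+1)\equiv 0\pmod c$ with nonzero left-hand side strictly below $c$---a contradiction. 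In case (i) (both even), Lemma \ref{lema_nizovi} together with the now-excluded case (ii) forces $|z_0|=|z_1|=2$, hence $x_0=y_1=2$; both sides of the congruence (\ref{kongruencija}) are then of order at most $c^{3/4}\ll c$, so it is an equality $\pm(am^2-bn^2)=tn-sm$. Squaring and using $t^2=bc+4$, $s^2=ac+4$ rewrites this as $(am^2-bn^2)\bigl[(am^2-bn^2)\pm 2sm+c\bigr]+4(m^2-n^2)=0$, which under $c>b^4$ and the size bounds on $m,n$ forces $am^2=bn^2$ and $m=n$, hence $a=b$, a contradiction.

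The principal technical obstacle is the tight numerical calibration in case (iv): the combined estimate must be pushed strictly below $c$, possibly via a split on whether $b\ge 2a$ (so that $|am(m+1)-bn(n+1)|\le bn(n+1)$ gives a smaller bound) or $b<2a$ (where the much tighter asymptotic $m/n\to\log t/\log s\approx 1$ holds, since $\log s\gtrsim\tfrac12\log c\ge 2\log b$ is already large). The improvement from $\varepsilon=3.377$ to $\varepsilon=4$ in Lemma \ref{lema_epsilon_m_n} and the lower bound $\min\{m,n\}\ge 9$ from Proposition \ref{prop_m_n_9} are precisely what make the constant $0.5348$ feasible.
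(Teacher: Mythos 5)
Your proposal is correct, and for the two components that actually determine the stated constant --- the parity claim and the odd case --- it follows the paper's proof essentially verbatim: mixed parities are excluded via Theorem \ref{teorem_izbacen_slucaj} and Lemma \ref{lema_tau} (the paper routes the exclusion of case (ii) through Lemma \ref{lema_tau} rather than the ``Moreover'' clause, but both work), and the odd case proceeds by upgrading the congruences (\ref{kong3})--(\ref{kong4}) to equalities under $n\le 0.5348b^{-3/4}c^{1/4}$, multiplying by $s$ and $t$, and forcing $m=n$ and then $a=b$, exactly as in the paper. Your worry about a split on $b\ge 2a$ is unnecessary: $\min\{m,n\}\ge 9$ together with $m<1.2513n+1$ already gives $m(m+1)\le n(2n+1)$, hence $|am(m+1)-bn(n+1)|<bn^2$ for any $a<b$. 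Where you genuinely diverge is the even case. The paper disposes of it in one line by citing the gap-principle Lemma \ref{lema_donja_m_staro} for $z_0=z_1=\pm2$, which yields the much stronger bound $n>0.35714\,b^{-1/2}c^{1/2}$; you instead run the congruence (\ref{kongruencija}) directly with $x_0=y_1=2$, upgrade it to the equality $\pm(am^2-bn^2)=tn-sm$, and exploit the identity $(am^2-bn^2)\bigl((am^2-bn^2)\pm 2sm+c\bigr)=4(n^2-m^2)$, whose bracketed factor exceeds $0.99c$ while the right-hand side is $O(c^{1/2})$, forcing $am^2=bn^2$, $m=n$ and $a=b$. I checked this identity and the size estimates; the argument is valid and self-contained, at the cost of delivering only the exponent $c^{1/4}$ in the even case --- which is all the lemma claims. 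One small repair: your justification that the even case forces $|z_0|=2$ (``Lemma \ref{lema_nizovi} together with the now-excluded case (ii)'') is not the right mechanism; the alternative $|z_0|=\tfrac12(cr-st)$ in case (i) of Theorem \ref{teorem_izbacen_slucaj} is excluded exactly as you excluded case (iii), via Lemma \ref{lema_tau}(i) and $c>b^4>ab^2\tau^{-4}$.
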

\begin{proof}
As in the proof of Lemma \ref{lema_uvodna_teorem_izbacen}, we can see that $m\equiv n \ (\bmod \ 2)$ when $c>b^4$, and when $m$ and $n$ are even, the only possibility is $|z_0|=2$. By  Theorem \ref{teorem_izbacen_slucaj} we need to consider two cases.
\begin{enumerate}[1)]
\item Case $m$ and $n$ are even, $|z_0|=|z_1|=2$. \\
From Lemma \ref{lema_epsilon_m_n} we have $m<1.252n+0.9995$. Since from Lemma \ref{donje na m i n} we have $m\geq 6$ we also have $n\geq 6$ and $m<1.4n$. Using Lemma \ref{lema_donja_m_staro} yields $m>0.4999998b^{-1/2}c^{1/2}$, and finally
$$n>0.35714b^{-1/2}c^{1/2}.$$

\item Case $m$ and $n$ odd, $|z_0|=t$, $|z_1|=s$.\\
Congruences (\ref{kong3}) and (\ref{kong4})  from the proof of Lemma \ref{lema_uvodna_teorem_izbacen} hold. Let us assume to the contrary, that $n<0.5348b^{-3/4}c^{1/4}$. Then 
\begin{align*}
2t|am(m+1)-bn(n+1)|&<2tbn^2<2.00004b^{3/2}c^{1/2}n^2<0.57204c,\\
2rt(m-n)&<0.8rtn<0.800032bc^{1/2}n<0.42786c,
\end{align*}
which means that we have equality in those congruences. This implies that a contradiction can  be established as in the proof of Lemma \ref{lema_uvodna_teorem_izbacen}.\qedhere
\end{enumerate} 
\end{proof}

\begin{proof}[Proof of Theorem \ref{teorem1.5}]
Let us assume that $d>d_+$. We prove this as in Proposition \ref{propozicija_d+}
\begin{enumerate}[i)]
\item Case $b<2a$ and $c\geq 890b^4$.\\
Since $a'=4a$ and $b>10^5$, inequality 
$$308.07a'b(b-a)^2a^{-1}<1233b^3<b^4<c$$
holds. We can use Lemma \ref{lema_rickert} and Lemma \ref{lema_donja_m} and observe inequality 
$$0.5348b^{-3/4}c^{1/4}<\frac{2\log(128.08b^6c^2)\log(0.0000081b^2c^2)}{\log(bc)\log(0.00325b^{-3}c)}$$ 
and since the right-hand side is decreasing in $c$ for $c>890b^4$ we get $b<99887$ a contradiction to $b>10^5$.
\item  Case $2a\leq b\leq 12a$ and $c\geq 1613b^4$.\\
Here we  have $a'=4(b-a)$ and $\frac{b}{2}\leq b-a\leq \frac{11}{12}b$, so 
$$308.07a'b(b-a)^2a^{-1}<11400b^3<b^4<c.$$
We observe an inequality 
$$0.5348b^{-3/4}c^{1/4}<\frac{2\log(58.71b^6c^2)\log(0.052c^2)}{\log(bc)\log(0.000087903b^{-3}c)}.$$ 
After noting that the right-hand side is decreasing in $c$ for $c>1613b^4$, we deduce $b<99949$, a contradiction. 

\item Case $b>12a$ and $c\geq 52761 b^4$.\\
Let us first assume that $c\geq 52761 b^4$.
Here we  have $a'=4(b-a)$ and $\frac{b}{12}< b-a<b$, so 
$$308.07a'b(b-a)^2a^{-1}<1233b^4<c.$$ We use Lemma \ref{lema_rickert2} to obtain inequality 
$$0.5348b^{-3/4}c^{1/4}<\frac{8\log(8.40335\cdot 10^{13}b^3c)\log(0.002579c^2)}{\log(bc)\log(0.0042145b^{-4}c)}.$$ 
Similarly as before, after noting that the right-hand side is decreasing in $c$ for $c>52761b^4$, we deduce $b<99998$, a contradiction.

Now, we assume that $39247b^4<c<52761 b^4$. We can modify the method in the following way. 
For $a=1$, we only have to notice that the right-hand side in the inequality in the Lemma \ref{lema_rickert2} is decreasing in $c$, insert our lower bound on $c$, and calculate an upper bound on $b$ from the inequality. We get $b<999994$, a contradiction. For $a\geq 2$, we modify estimate $0.016858a(a')^{-1}b^{-1}(b-a)^{-2}c>0.0042145b^{-4}ca_0$, where $a_0=2$ and get $b<73454$, a contradiction. \qedhere
\end{enumerate}
\end{proof}

\begin{lemma}\label{lema_prethodi_tm16}
If $\{a,b,c,d\}$ is a $D(4)$-quadruple with $a<b<c<d_+<d$ then
$$d>\min\{0.249965b^{5.5}c^{6.5},0.240725a^{4.5}c^{5.5}\}.$$
\end{lemma}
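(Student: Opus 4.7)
The plan is to obtain both lower bounds inside the minimum by stringing together Theorem \ref{teorem_izbacen_slucaj}, Lemma \ref{donje na m i n}, and Lemma \ref{lemaw4w5w6}. Since $d>d_+$, there exist non-negative integers $m,n$ with $z=v_m=w_n$ and $cd+4=z^2$. Lemma \ref{donje na m i n} offers two alternatives: either (A) $6\leq m\leq 2n+1$ with $n\geq 7$, or (B) the ``free-range'' branch $|z_0|<1.608a^{-5/14}c^{9/14}$ of Lemma \ref{lemma_fil_pocetni} holds together with $6\leq m\leq 2n+1$ or $m=n=4$.

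By the remark following Lemma \ref{lemma_fil_pocetni}, option (B) can only materialize if $d_0=(z_0^2-4)/c$ yields an intermediate irregular $D(4)$-quadruple $\{a,b,d_0,c\}$ with $0<d_0<c$. This is precisely what Theorem \ref{teorem_izbacen_slucaj} rules out under the hypothesis $d>d_+$: in each parity case the initial terms $|z_0|$ are pinned down to the explicit values $2$, $t$, or $\tfrac{1}{2}(cr-st)$, leaving no room for the free-range option. Consequently we are in option (A), so $m\geq 6$ and $n\geq 7$.

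It now suffices to insert these bounds into Lemma \ref{lemaw4w5w6}. From $z\geq w_7$ we read off $d>0.249965\,b^{5.5}c^{6.5}$, while from $z\geq v_6$ we read off $d>0.240725\,a^{4.5}c^{5.5}$. Since $d$ exceeds each of these two quantities, it in particular exceeds their minimum, which is the stated inequality.

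There is essentially no analytic obstacle in this argument: the real work has already been absorbed into Theorem \ref{teorem_izbacen_slucaj}, whose entire purpose is to excise the awkward $|z_0|<1.608a^{-5/14}c^{9/14}$ branch from Lemma \ref{lemma_fil_pocetni}. Once that branch is gone, the lemma at hand is a direct consequence of Lemma \ref{donje na m i n} combined with the growth estimates for $v_m$ and $w_n$ recorded in Lemma \ref{lemaw4w5w6}.
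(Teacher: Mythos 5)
Your argument is correct and takes essentially the same route as the paper: both proofs invoke Theorem \ref{teorem_izbacen_slucaj} to excise the undetermined-$z_0$ branch, use Lemma \ref{donje na m i n} to force the lower bounds on $m$ and $n$, and then read the two inequalities off Lemma \ref{lemaw4w5w6}. The only cosmetic difference is that the paper records the weaker disjunction ``$m\geq 6$ or $n\geq 7$'' and hence lands on the minimum directly, whereas you deduce both inequalities simultaneously; either way the stated bound follows.
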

\begin{proof}
From \cite[Lemma 5]{ireg_pro} and Theorem \ref{teorem_izbacen_slucaj} we have that $m\geq 6$ or $n\geq 7$ so Lemma \ref{lemaw4w5w6}  implies $d>0.249965b^{5.5}c^{6.5}$ or $d>0.240725a^{4.5}c^{5.5}$.
\end{proof}
\bigskip
\begin{proof}[Proof of  Theorem \ref{teorem1.6}]
Let $\{a,b,c,d\}$ be a $D(4)$-quadruple such that $a<b<c<d_+<d$ and let us assume to the contrary, that  there exists $e<c$ such that $\{e,a,b,c\}$ is an irregular $D(4)$-quadruple. Then by Lemma \ref{lema_prethodi_tm16} $c>0.240725a'^{4.5}b^{5.5}>0.240725\cdot (10^5)^{1.5}b^4>52761 b^4$, where $a'=\min\{a,e\}\geq 1$ or $c>0.249965a^{5.5}c^{6.5}>52761 b^4$. Theorem \ref{teorem1.5} implies that $\{a,b,c,d\}$ must be a regular quadruple; a contradiction.
\end{proof}

\section{Proof of Theorem \ref{teorem_prebrojavanje}} 

In this section, we aim to split our problem into several parts. We will consider separately the case when a triple $\{a,b,c\}$ is regular ($c=a+b+2r$), and when it is not regular  ($c>ab+a+b$). In the latter case, we will consider solutions of the equation $z=v_m=w_n$ without assuming that the inequalities from Lemma \ref{granice_fundamentalnih} hold when $|z_0|\neq2$. Lemmas from this section will usually also address separately the case when $c>ab+a+b$. More specifically, only this case when $(|z_0|,|z_1|)=(t,s)$ and $z_0z_1>0$, which can then be used to prove the results to all the other cases (except the case $|z_0|=2$) by using Lemma \ref{lema_nizovi}.

In what follows, we will  introduce results concerning linear forms in three logarithms. These results will establish that there are at most $3$ possible extensions of a triple to a quadruple for a fixed fundamental solution. Then, we will use a result of Laurent from \cite{laurent} in order to establish our final technical tools and finish the proof of Theorem \ref{teorem_prebrojavanje}.

First, we prove that the inequality from Lemma \ref{mnfilipin} holds also for the case $c>ab+a+b$ without assuming that the inequalities from Lemma \ref{granice_fundamentalnih} hold.

\begin{lemma}\label{mn_neregularnatrojka}
Let $\{a,b,c\}$ be a Diophantine triple with $b>\max\{a,10^5\}$ and $c>ab+a+b$. If $v_m = w_n$ has a solution with $m\geq 1$ such that $(|z_0|,x_0;|z_1|,y_1)=(t,r;s,r)$ and $z_0z_1 > 0$, then $n-1 \leq m$.
\end{lemma}  
\begin{proof}
Notice that $c>\max\{4b,ab+a+b\}$ and $c>ab+a+b>r^2$. The statement follows similarly to the argument for \cite[Lemma 2.9]{cfm}, and so we opt to omit the proof.
\end{proof}
 
\bigskip

Let $\{a,b,c\}$ be a $D(4)$ triple. We define and observe
\begin{equation}\label{lambda_linearna}
\Lambda=m\log \xi-n\log\eta+\log\mu,
\end{equation}
a linear form in three logarithms, where $\xi=\frac{s+\sqrt{ac}}{2}$, $\eta=\frac{t+\sqrt{bc}}{2}$ and $\mu=\frac{\sqrt{b}(x_0\sqrt{c}+z_0\sqrt{a})}{\sqrt{a}(y_1\sqrt{c}+z_1\sqrt{b})}$. This linear form and its variations were already studied before, for example in \cite{fil_xy4}. From \cite[Lemma 10]{fil_xy4} we know that
\begin{equation}\label{nejednakost_za_lambda}
0<\Lambda<\kappa_0\left(\frac{s+\sqrt{ac}}{2}\right)^{-2m},
\end{equation}
where $\kappa_0$ is a coefficient which is defined in the proof of the lemma with
\begin{equation}\label{kapa0}
\kappa_0=\frac{(z_0\sqrt{a}-x_0\sqrt{c})^2}{2(c-a)}.
\end{equation}
\begin{lemma}\label{lemma_kappa_linearna}
Let $(m,n)$ be a solution of the equation $z=v_m=w_n$ and assume that $m>0$ and $n>0$. Then
$$0<\Lambda<\kappa\xi^{-2(m-\delta)},$$
where
\begin{enumerate}[i)]
\item $(\kappa,\delta)=(2.7\sqrt{ac},0)$ if the inequalities from Lemma \ref{granice_fundamentalnih} hold,
\item $(\kappa,\delta)=(6,0)$ if $|z_0|=2$,
\item $(\kappa,\delta)=\left(1/(2ab),0\right)$ if $z_0=t$, 
\item $(\kappa,\delta)=\left(2.0001b/c,1\right)$ if $z_0=-t$, $b>10^5$ and $c>ab+a+b$.
\end{enumerate}
\end{lemma}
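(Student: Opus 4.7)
The plan is to start from the known bound $0 < \Lambda < \kappa_0 \xi^{-2m}$ of \cite[Lemma 10]{fil_xy4} (recalled in (\ref{nejednakost_za_lambda})--(\ref{kapa0})) with $\kappa_0 = (z_0\sqrt{a}-x_0\sqrt{c})^2/(2(c-a))$, and to refine the constant in each of the four cases by exploiting the specific form of $(z_0, x_0)$. The key algebraic tool is the Pell factorization $(z_0\sqrt{a}-x_0\sqrt{c})(z_0\sqrt{a}+x_0\sqrt{c}) = az_0^2-cx_0^2 = -4(c-a)$, which gives the equivalent formula
\[
\kappa_0 = \frac{8(c-a)}{(z_0\sqrt{a}+x_0\sqrt{c})^2}.
\]
Whichever of the two expressions has the larger denominator (equivalently, whichever of the conjugate factors $z_0\sqrt{a}\pm x_0\sqrt{c}$ is the ``big'' one for the given $(z_0,x_0)$) is more convenient in the corresponding case.

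For (i) I would plug the fundamental-solution bounds from Lemma \ref{granice_fundamentalnih} into $(z_0\sqrt{a}-x_0\sqrt{c})^2\le 2(az_0^2+cx_0^2)$ and simplify using the lower bound on $c-a$, producing the $2.7\sqrt{ac}$ constant. In (ii), $|z_0|=2$ and the Pell equation give $x_0=2$, so a direct calculation yields $\kappa_0=2(\sqrt{c}\mp\sqrt{a})/(\sqrt{c}\pm\sqrt{a})$; the worst case $z_0=-2$ is bounded by $6$ using $c>4a$, which follows from Lemma \ref{c_granice} (and also from $c\ge a+b+2r>(\sqrt{a}+\sqrt{b})^2>4a$ in the regular case).

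Both (iii) and (iv) arise from $|z_0|=t$, which together with the Pell equation forces $x_0=r$ and, via Lemma \ref{lema_tau}, $c>ab^2$. In (iii), $z_0=t$, so the large factor is $t\sqrt{a}+r\sqrt{c}$ and I would work with $\kappa_0 = 8(c-a)/(t\sqrt{a}+r\sqrt{c})^2$; expanding $(t\sqrt{a}+r\sqrt{c})^2 = 2abc+4(a+c)+2tr\sqrt{ac}$ and sharpening the trivial estimate $tr\ge b\sqrt{ac}$ by carrying the next-order term in $tr = b\sqrt{ac}\sqrt{1+4/(ab)+4/(bc)+16/(ab^2c)}$ (via $\sqrt{1+x}\ge 1+x/2-x^2/8$) and combining with $c>ab^2$ and $b>10^5$ should yield $\kappa_0\le 1/(2ab)$. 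In (iv), $z_0=-t$ flips the role of the two conjugate factors, so $(z_0\sqrt{a}-x_0\sqrt{c})^2 = (t\sqrt{a}+r\sqrt{c})^2$ is now the large quantity and $\kappa_0$ is of order $2ab$, too big to match the claim directly. The resolution is to rewrite $\xi^{-2m} = \xi^2\cdot\xi^{-2(m-1)}$ with $\xi^2\ge ac$, reducing the goal to $\kappa_0 \le 2.0001\, ab$; this in turn follows from the upper bound $(t\sqrt{a}+r\sqrt{c})^2\le 4abc+8(a+c)+O(1/b)$ (via $\sqrt{1+x}\le 1+x/2$), where the explicit $0.0001$ slack is absorbed by the error terms thanks to $b>10^5$ and $c>ab+a+b$.

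The hardest step will be case (iii): the trivial estimate $tr\ge b\sqrt{ac}$ only yields $\kappa_0\lesssim 2/(ab)$, a factor of $4$ away from the target, so the sharp $1/(2ab)$ bound will demand a careful second-order analysis of $2tr\sqrt{ac}$ together with full use of the strong constraint $c>ab^2$ from Lemma \ref{lema_tau}. Cases (i), (ii), and (iv) are more straightforward variations of the same theme, with (iv) benefitting from the explicit numerical slack baked into the claimed constant.
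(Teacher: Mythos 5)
Your overall strategy is exactly the paper's: start from (\ref{nejednakost_za_lambda})--(\ref{kapa0}) and exploit the conjugate identity $\kappa_0=8(c-a)/(z_0\sqrt a+x_0\sqrt c)^2$ case by case. Cases i), ii) and iv) go through as you describe and match the published argument; in iv) the paper bounds $(t\sqrt a+r\sqrt c)^2<4r^2c$ and then divides by $\xi^2>ac$, which is your computation with a cruder intermediate step. (Only note that your identity $\xi^{-2m}=\xi^{2}\cdot\xi^{-2(m-1)}$ should read $\xi^{-2}\cdot\xi^{-2(m-1)}$; the reduction you then state, namely that the goal becomes $\kappa_0\le 2.0001\,ab$, is the correct one, so this is just a slip.)

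The gap is case iii), and it is not one your plan can close. You correctly observe that $tr\ge b\sqrt{ac}$ gives $(t\sqrt a+r\sqrt c)^2\ge 4abc$ and hence only $\kappa_0<2/(ab)$. But the second-order corrections to $2tr\sqrt{ac}$ are of size $O(a+c)$ against a main term $2abc$, so they cannot multiply the denominator by the missing factor of $4$; and $c>ab^2$ from Lemma \ref{lema_tau} does not help, because $8(c-a)/(t\sqrt a+r\sqrt c)^2$ is increasing in $c$ with limit $4/(ab+2+r\sqrt{ab})\approx 2/(ab)$. Concretely, for the triple $\{1,5,96\}$ one has $t=22$, $r=3$, and $8(c-a)/(t\sqrt a+r\sqrt c)^2=760/(22+3\sqrt{96})^2\approx 0.288$, which exceeds $1/(2ab)=0.1$; the constant $1/(2ab)$ is simply not attainable. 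You have not missed a trick that the paper has: its own proof of iii) is the bare assertion $8(c-a)/(t\sqrt a+r\sqrt c)^2<1/(2ab)$, which fails for the same reason. The provable constant is $2/(ab)$ --- exactly what your elementary estimate yields --- and substituting it costs only a factor $4$ in the one place this case is used (the lower bound $\frac{m_2-m_1}{\log\eta}>8b^2$ in the proof of Theorem \ref{teorem_prebrojavanje} becomes $2b^2$, still far above the threshold $152184$). So you should prove and use $\kappa=2/(ab)$ rather than chase the stated constant.
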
 
\begin{proof}
From Lemma \ref{granice_fundamentalnih} we get 
$$0<x_0\sqrt{c}-z_0\sqrt{a}<2x_0\sqrt{c}<2.00634\sqrt[4]{ac}\sqrt{c}.$$ Inserting in the expression (\ref{kapa0}) yields $\kappa_0<2.7\sqrt{ac}$. \\
If $|z_0|=2$, equation (\ref{prva_pelova_s_a}) yields $x_0=2$. Using $c>4a$ gives us the desired estimate.\\
If $c>a+b+2r$ and $z_0=t$ then $x_0=r$ and $\kappa_0=\frac{8(c-a)}{(t\sqrt{a}+r\sqrt{c})^2}<\frac{1}{2ab}$.\\
In the last case, we observe that 
$$\kappa_0\left(\frac{s+\sqrt{ac}}{2}\right)^{-2}<\frac{2r^2c}{c-a}\cdot \frac{1}{ac}=\frac{2b\left(1+\frac{4}{ab}\right)}{c\left(1-\frac{a}{c}\right)}<2.0001\frac{b}{c}$$
where we have used that $c>ab>10^5a$.
\end{proof}

The next result is due to Matveev \cite{matveev} and can be used to get a better lower bound on the linear form (\ref{lambda_linearna}) than (\ref{nejednakost_za_lambda}).

\begin{theorem}[Matveev]\label{Matveev}
Let $\alpha_1, \alpha_2,\alpha_3$ be a positive, totally real algebraic numbers such that they are multiplicatively independent. Let $b_1,b_2,b_3$ be rational integers with $b_3\neq 0$. Consider the following linear form $\Lambda$  in the three logarithms: 
$$\Lambda=b_1\log\alpha_1+b_2\log\alpha_2+b_3\log\alpha_3.$$
Define real numbers $A_1,A_2,A_3$ by
$$A_j=\max\{D\cdot h(\alpha_j),|\log \alpha_j|\}\quad (j=1,2,3),$$
where $D=[\mathbb{Q}(\alpha_1,\alpha_2,\alpha_3):\mathbb{Q}]$. Put
$$B=\max\left\{1,\max\{(A_j/A_3)|b_j|:\ j=1,2,3\}\right\}.$$
Then we have 
$$\log|\Lambda|>-C(D)A_1A_2A_3\log(1.5eD\log(eD)\cdot B)$$
with
$$C(D)=11796480e^4D^2\log(3^{5.5}e^{20.2}D^2\log(eD)).$$
\end{theorem}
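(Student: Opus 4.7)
The statement is Matveev's celebrated lower bound for linear forms in logarithms of algebraic numbers, a deep result from transcendence theory which the paper uses as an external input. I would not attempt an original proof but rather appeal directly to \cite{matveev}, exactly as the paper does in the sequel. Nonetheless, the standard strategy — going back to A.\ Baker and refined through the works of Feldman, W\"ustholz, Waldschmidt, and finally Matveev — can be sketched as follows.

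The plan is a proof by contradiction: assume $|\Lambda|$ is smaller than the asserted bound and build an auxiliary object whose analytic and arithmetic estimates conflict. In Matveev's approach one constructs an interpolation determinant $\Delta$ whose rows are indexed by tuples $(k_0,k_1,k_2,k_3)$ with $0\le k_j\le K_j$ and whose columns consist of values of the monomials $z^{k_0}\alpha_1^{k_1 z}\alpha_2^{k_2 z}\alpha_3^{k_3 z}$ at a suitable arithmetic progression of rational integers. The integer parameters $K_j$ are chosen so as to balance against $A_1,A_2,A_3$ and $B$; this choice is the heart of the optimization.

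First one derives an analytic upper bound: using $\Lambda=b_1\log\alpha_1+b_2\log\alpha_2+b_3\log\alpha_3$ with $|\Lambda|$ tiny, the entries of the relevant analytic function nearly satisfy linear relations, and a Schwarz lemma applied to an extension of $\det$ as an entire function of one variable forces $|\Delta|$ to be exponentially small in every archimedean place. Second, a Liouville-type inequality gives $|\Delta|\ge H(\Delta)^{-D+1}$ whenever $\Delta\neq 0$, with $H(\Delta)$ controlled by $A_1A_2A_3$ and the $K_j$; non-vanishing is guaranteed by a zero estimate on the algebraic group $\mathbb{G}_m^3$ (a form of the Philippon–Nesterenko multiplicity estimate), which requires the hypothesis that $\alpha_1,\alpha_2,\alpha_3$ be multiplicatively independent.

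The main obstacle, and the reason Matveev's paper is long and technical, is not the overall outline but the explicit bookkeeping required to produce the precise constant $C(D)=11796480\,e^4D^2\log(3^{5.5}e^{20.2}D^2\log(eD))$ together with the crucial absence of any extra factor involving $\log B$ that appeared in earlier Baker-type bounds. This saving comes from a refined choice of interpolation points and a careful optimization of the $K_j$, which I would not attempt to reproduce; the theorem is applied as a black box in the forthcoming estimates on $\Lambda$ defined in \eqref{lambda_linearna}.
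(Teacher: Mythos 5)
Your treatment matches the paper exactly: Matveev's theorem is imported as an external result with a citation and no proof is given in the paper either, so deferring to \cite{matveev} is precisely the right move. Your sketch of the interpolation-determinant strategy is accurate background but not required; nothing further is needed here.
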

Put $\alpha_1=\xi=\frac{s+\sqrt{ac}}{2}$, $\alpha_2=\mu=\frac{\sqrt{b}(x_0\sqrt{c}+z_0\sqrt{a})}{\sqrt{a}(y_1\sqrt{c}+z_1\sqrt{b})}$ and $\alpha_3=\eta=\frac{t+\sqrt{bc}}{2}$. 
We can easily show, similarly as in \cite{sestorka} or \cite{nas2}, that 
$$\frac{A_1}{A_3}<\frac{\log(1.001\sqrt{ac})}{\log(\sqrt{bc})}<1.0001.$$
It follows by similar arguments as in \cite{fm} that 
$h(\alpha_2)<\frac{1}{4}\log(P_2)$
where
\begin{align*}
P_2=\max\{b^2(c-a)^2,x_0^2abc^2,y_1^2abc^2,x_0y_1a^{1/2}b^{3/2}c^2\}.
\end{align*}
 First, we consider the case $c=a+b+2r<4b$, so that the case $iii)$ in Lemma \ref{lema_tau} cannot hold. Also, the case where $|z_1|=s$ and $|z_0|=\frac{1}{2}(cr-st)$ cannot occur since the same lemma implies $c=a+b+2r>a^2b$ ($a=1$) and this case can be eliminated with the same arguments as in \cite{fht}. Also, since $s=a+r$  and $t=b+r$, we have $\frac{1}{2}(cr-st)=2$, so the only case is $|z_0|=|z_1|=x_0=y_1=2$. As in \cite{fm} we easily get
$$A_2=\max\{4h(\alpha_3),|\log(\alpha_3)|\}<4\log c.$$
Now we will study the case when $c>ab+a+b$ and $(|z_0|,x_0,|z_1|,y_1)=(t,r;s,r)$ without assuming the inequalities from the Lemma \ref{granice_fundamentalnih}. Here we have 
$$
x_0^2abc^2=y_1^2abc^2=(ab+4)abc^2<c^4
$$
and
$$x_0y_1a^{1/2}b^{3/2}c^2=r^2(ab)^{1/2}bc^2<c^{9/2},$$
so in this case
$$A_2<\frac{9}{2}\log c.$$

Theorem \ref{teorem1.5} implies that $c<39247b^4$, from which it follows that $A_3=2\log \eta>2\log(0.071c^{5/8})>\frac{5}{4}\log(0.014c)$. This together with Lemma \ref{mn_neregularnatrojka} implies 
\begin{align*}
B&=\max\left\{\frac{A_1}{A_3}m,\frac{A_2}{A_3},n\right\}\\
&<\max\{m,5.724,m+1\}=\max\{5.724,m+1\}.
\end{align*}
Since by Lemma \ref{donje na m i n} and Theorem \ref{teorem_izbacen_slucaj} we know that $m\geq 6$, we can use $B<m+1$. The numbers $\alpha_1$, $\alpha_2$ and $\alpha_3$ are multiplicatively independent (this can be shown similarly as in \cite[Lemma 19]{petorke}). 
We can now apply Theorem \ref{Matveev} which proves the next result.
 \begin{proposition}\label{prop_matveev}
 For $m\geq 6$ we have
 $$\frac{m}{\log (38.92(m+1))}<2.7717\cdot 10^{12}\log \eta\log c.$$
 \end{proposition}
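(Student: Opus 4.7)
The plan is to sandwich the linear form $\Lambda$ defined in (\ref{lambda_linearna}) between the upper bound from Lemma \ref{lemma_kappa_linearna} and a lower bound obtained via Matveev's theorem. To set up Matveev I would take $\alpha_1=\xi$, $\alpha_2=\mu$, $\alpha_3=\eta$ with $D=[\mathbb{Q}(\xi,\eta,\mu):\mathbb{Q}]=4$. Each of $\xi,\eta$ is a root of a monic quadratic with constant term $-1$, so $h(\xi)=\tfrac12\log\xi$, $h(\eta)=\tfrac12\log\eta$, and hence $A_1=2\log\xi$, $A_3=2\log\eta$. The estimate $h(\mu)\le\tfrac14\log P_2$ carried out just before the statement, together with the two-case analysis of $P_2$, gives $A_2\le\tfrac92\log c$ in both of the remaining configurations (the regular triple case $c=a+b+2r$ and the case $(|z_0|,|z_1|)=(t,s)$ with $c>ab+a+b$).

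To control $B$ I would use that $\xi<\eta$ gives $A_1/A_3<1$, so $(A_1/A_3)m<m$. For the ratio $(A_2/A_3)$, combining Theorem \ref{teorem1.5} ($c<39247b^4$) with $b>10^5$ from Lemma \ref{b10na5} yields $A_3>\tfrac54\log(0.014c)$, which forces $A_2/A_3<m+1$ whenever $m\ge 6$; together with $n\le m+1$ from Lemma \ref{mnfilipin}, this gives $B\le m+1$. A short numerical check produces $1.5\,eD\log(eD)=6e\log(4e)<38.92$, which is the source of the constant $38.92$ in the statement.

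Equating Matveev's lower bound with the upper bound $\log\Lambda<\log\kappa-2(m-\delta)\log\xi$ from Lemma \ref{lemma_kappa_linearna} and dividing through by $2\log\xi$ cancels the factor $A_1=2\log\xi$, leaving
\[
m-\delta-\frac{\log\kappa}{2\log\xi}<C(4)\,A_2A_3\,\log(38.92(m+1))\le 9\,C(4)\log c\log\eta\log(38.92(m+1)).
\]
Plugging $D=4$ into the formula for $C(D)$ from Theorem \ref{Matveev} produces $9\,C(4)<2.7717\cdot 10^{12}$ with a small amount of slack, and rearranging yields the stated inequality.

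The main obstacle I foresee is absorbing the additive contribution $\delta+\log\kappa/(2\log\xi)$ on the left uniformly across the four cases of Lemma \ref{lemma_kappa_linearna}. In the benign cases $(\kappa,\delta)\in\{(2.7\sqrt{ac},0),(6,0),(1/(2ab),0)\}$ this term is $O(1)$ for large $c$ and is easily covered by the slack in $C(4)$. The delicate one is $(\kappa,\delta)=(2.0001b/c,1)$: here $\log\kappa<0$ so it actually works in our favour, but the shift $\delta=1$ must be carried carefully so that after the rearrangement the numerical margin between $9\,C(4)$ and $2.7717\cdot 10^{12}$ still covers it. The rest is a routine numerical evaluation of $C(4)$ from Theorem \ref{Matveev}.
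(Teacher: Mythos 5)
Your proposal is correct and follows essentially the same route as the paper: the same choice $(\alpha_1,\alpha_2,\alpha_3)=(\xi,\mu,\eta)$ with $D=4$, the bounds $A_1=2\log\xi$, $A_2\le\tfrac92\log c$, $A_3=2\log\eta$, the deduction $B\le m+1$ from $c<39247b^4$ and $n\le m+1$, and the constant $9\,C(4)\approx 2.7716\cdot10^{12}<2.7717\cdot10^{12}$ obtained by comparing Matveev's lower bound with the upper bound of Lemma \ref{lemma_kappa_linearna}. The residual additive term $\delta+\log\kappa/(2\log\xi)$ you flag is indeed $O(1)$ in every case and is comfortably absorbed by the slack between $9\,C(4)$ and $2.7717\cdot10^{12}$ once multiplied by $\log\eta\log c>130$, so the argument closes.
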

 
\bigskip 

 If we have a $D(4)$-quadruple $\{a,b,c,d\}$ then $z=\sqrt{cd+4}$ is a solution of the equation $z=v_m=w_n$ for some $m$, $n$ and  fundamental solutions $(z_0,z_1)$. 
 
 Let $\{a,b,c\}$ be a $D(4)$-triple and let us assume that there are $3$ solutions to the equation $z=v_m=w_n$ which belong to the same fundamental solution. We denote them with $(m_i,n_i)$, $i=0,1,2$. Let us assume that $m_0<m_1<m_2$ and $m_1\geq 4$.  Denote
 $$\Lambda_i=m_i\log \xi-n_i\log\eta+\log\mu.$$
 
As in \cite{fm}, we borrow an idea of Okazaki from \cite{okazaki} in order to find a lower bound on $m_2-m_1$ in the terms of $m_0$. We omit the proof since it is analogous to \cite[Lemma 7.1]{fm}. 
\begin{lemma}\label{okazaki}
Assume that $v_{m_0}$ is positive. Then
$$m_2-m_1>\Lambda_0^{-1}\Delta \log \eta,$$
where
$$\Delta=\left|\begin{array}{cc} n_1-n_0 & n_2-n_1 \\ m_1-m_0 & m_2-m_1 \end{array} \right|>0.$$
In particular, if $m_0>0$ and $n_0>0$ then 
$$m_2-m_1>\kappa^{-1}(ac)^{m_0-\delta}\Delta \log \eta.$$
\end{lemma}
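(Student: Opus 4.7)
The plan is to adapt Okazaki's trick, exactly as in the proof of \cite[Lemma 7.1]{fm}. With $A = m_1 - m_0$, $B = n_1 - n_0$, $C = m_2 - m_1$, $D = n_2 - n_1$ (so that $\Delta = BC - AD$), I would form the integer combination $C\Lambda_0 - (A+C)\Lambda_1 + A\Lambda_2$, in which the coefficients of $\log\xi$ and of $\log\mu$ are designed to cancel. A direct calculation then yields the key identity
$$\Delta \log\eta \;=\; C(\Lambda_0 - \Lambda_1) \,-\, A(\Lambda_1 - \Lambda_2).$$

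Because $v_{m_0}$ is positive, the unfolding of $\Lambda_i$ as the logarithm of a ratio (recalled just before (\ref{nejednakost_za_lambda})) shows that each $\Lambda_i$ is positive and is in fact a strictly decreasing function of $z_i = v_{m_i}$; hence $\Lambda_0 > \Lambda_1 > \Lambda_2 > 0$. Discarding the (negative) second term on the right of the identity gives
$$\Delta \log\eta \;<\; C(\Lambda_0 - \Lambda_1) \;<\; C\Lambda_0 \;=\; (m_2 - m_1)\Lambda_0,$$
which, after dividing by $\Lambda_0$, is the first assertion of the lemma. For the ``in particular'' clause, I would invoke Lemma \ref{lemma_kappa_linearna} (whose hypotheses $m_0 > 0$, $n_0 > 0$ are precisely what is assumed here) to replace $\Lambda_0$ by the upper bound $\kappa\xi^{-2(m_0 - \delta)}$, and then use $\xi = (s+\sqrt{ac})/2 > \sqrt{ac}$ --- immediate from $s^2 = ac + 4$ --- to pass from $\xi^{2(m_0 - \delta)}$ to the cleaner quantity $(ac)^{m_0 - \delta}$. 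Substituting back yields
$$m_2 - m_1 \;>\; \kappa^{-1}(ac)^{m_0 - \delta}\,\Delta \log\eta.$$

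The main obstacle is justifying the strict inequality $\Delta > 0$, as opposed to merely the upper bound on $|\Delta|$ that the identity gives for free. Since $\Delta$ is an integer, this amounts to checking that the right-hand side of the key identity is strictly positive, i.e.\ that $C(\Lambda_0 - \Lambda_1)$ dominates $A(\Lambda_1 - \Lambda_2)$. The exponential decay $\Lambda_i < \kappa\xi^{-2(m_i-\delta)}$ from Lemma \ref{lemma_kappa_linearna}, together with $m_1 \geq 4$ and $\xi^2 > ac \geq 1$, forces $\Lambda_1 - \Lambda_2 \leq \Lambda_1$ to be negligible compared with $\Lambda_0 - \Lambda_1$, so the overall sign is positive and $\Delta \geq 1$. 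Once this quantitative comparison is made, the remainder is the same algebraic bookkeeping as in \cite[Lemma 7.1]{fm}.
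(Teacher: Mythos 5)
Your architecture is exactly what the paper intends (it omits the proof of this lemma, pointing to \cite[Lemma 7.1]{fm}, which is what you reconstruct): the identity $\Delta\log\eta=(m_2-m_1)(\Lambda_0-\Lambda_1)-(m_1-m_0)(\Lambda_1-\Lambda_2)$ is right, the monotonicity $\Lambda_0>\Lambda_1>\Lambda_2>0$ is the correct place where positivity of $v_{m_0}$ enters, discarding the negative second term gives $\Delta\log\eta<(m_2-m_1)\Lambda_0$, and the ``in particular'' clause follows from Lemma \ref{lemma_kappa_linearna} together with $\xi>\sqrt{ac}$ exactly as you say.

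The one step that does not hold up as written is your justification of $\Delta>0$. You argue that the upper bounds $\Lambda_i<\kappa\xi^{-2(m_i-\delta)}$ force $\Lambda_1-\Lambda_2$ to be negligible compared with $\Lambda_0-\Lambda_1$. Upper bounds on all three $\Lambda_i$ cannot by themselves bound $\Lambda_0-\Lambda_1$ from below: a priori $\Lambda_0$ could exceed $\Lambda_1$ by an arbitrarily small margin, in which case $(m_1-m_0)(\Lambda_1-\Lambda_2)$ would dominate and the determinant would come out nonpositive. What is needed, and what the argument of \cite{fm} supplies at this point, is a matching \emph{lower} bound $\Lambda_0\geq c_1 v_{m_0}^{-2}$ with $c_1$ of the order $c(b-a)/(ab)$, obtained from the closed form $\Lambda=\log\bigl[\bigl(1+\sqrt{1+4(c-a)/(az^2)}\bigr)/\bigl(1+\sqrt{1+4(c-b)/(bz^2)}\bigr)\bigr]$ evaluated at $z=v_{m_0}$, combined with an upper bound $v_{m_0}<cx_0s^{m_0-1}$ as in the proof of Lemma \ref{lemaw4w5w6}. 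Only with this two-sided comparison does $\Lambda_0>(m_1-m_0+1)\Lambda_1$ follow --- the factor $\xi^{2(m_1-m_0)}\geq ac$ overwhelms both the ratio of the two constants and the factor $m_1-m_0+1$ --- and only then is the right-hand side of your key identity strictly positive, whence $\Delta\geq 1$. Everything else in your write-up is correct.
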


\begin{proposition}\label{prop_okazaki_rjesenja}
Suppose that there exist $3$ positive solutions $(x_{(i)},y_{(i)},z_{(i)})$, $i=0,1,2$, of the system of Pellian equations (\ref{prva_pelova_s_a}) and (\ref{druga_pelova_s_b}) with $ z_{(0)} < z_{(1)} < z_{(2)}$ belonging to the same class of solutions and $c>b>10^5$. Put $z_{(i)} = v_{m_i} = w_{n_i}$. Then 
 $m_0\leq 2$.
\end{proposition}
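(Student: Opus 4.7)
The plan is to argue by contradiction: assume $m_0\geq 3$ and combine the lower bound on $m_2$ from Lemma \ref{okazaki} with the Matveev-type upper bound from Proposition \ref{prop_matveev} to force $m_2$ into two incompatible regimes. By Lemma \ref{donje na m i n} together with Theorem \ref{teorem_izbacen_slucaj}, the assumption $m_0\geq 3$ in fact implies $m_0\geq 6$ (and $n_0\geq 7$), so all the relevant sequence values $v_{m_i}$ are positive and Lemma \ref{okazaki} is applicable.

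The lower bound comes from Lemma \ref{okazaki}: since $m_0<m_1<m_2$ are integers and $\Delta$ is a positive integer,
$$m_2>\kappa^{-1}(ac)^{m_0-\delta}\log\eta.$$
With $(\kappa,\delta)$ taken from Lemma \ref{lemma_kappa_linearna}, I would check case by case that $\kappa^{-1}(ac)^{m_0-\delta}$ is at least of order $(ac)^{m_0-1/2}$. More precisely, the four cases give bounds of the form $(ac)^{m_0-1/2}/2.7$, $(ac)^{m_0}/6$, $2ab(ac)^{m_0}$, and, using $c>ab$ from Lemma \ref{c_granice}, at least $a^2c^{m_0}/2.0001$. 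For $m_0\geq 3$ each of these is at least $Ac^{5/2}$ for an explicit absolute constant $A>0$, hence $m_2>Ac^{5/2}\log\eta$, a polynomial lower bound in $c$.

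The matching upper bound comes from Proposition \ref{prop_matveev}:
$$\frac{m_2}{\log(38.92(m_2+1))}<2.7717\cdot 10^{12}\log\eta\log c.$$
Since at least one of $z_{(0)},z_{(1)},z_{(2)}$ corresponds to a quadruple with $d>d_+$, Theorem \ref{teorem1.5} forces $c<39247b^4$, so $\log\eta\log c$ is of order $(\log c)^2$ and the right-hand side is only polylogarithmic in $c$. The collision of this polylogarithmic growth with the polynomial lower bound $m_2>Ac^{5/2}\log\eta$ is impossible already for $c>b>10^5$, producing the required contradiction.

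The main obstacle is the uniform treatment of the four possibilities for $(\kappa,\delta)$ in Lemma \ref{lemma_kappa_linearna}. Case (i), with the largest $\kappa=2.7\sqrt{ac}$, is the weakest and determines the leading exponent $5/2$ in the lower bound on $m_2$; case (iv), with $\delta=1$ and $\kappa\sim b/c$, is the most delicate and essentially requires the inequality $c>ab+a+b$ that is built into the hypothesis of Lemma \ref{lemma_kappa_linearna}(iv). Once a polynomial lower bound in $c$ is secured in every case, the collision with Matveev is immediate and forces $m_0\leq 2$.
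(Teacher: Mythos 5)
Your overall strategy is exactly the paper's: assume $m_0>2$, upgrade this to $m_0\geq 6$ via Lemma \ref{donje na m i n} (the regular solutions having $m\leq 2$ by Lemma \ref{regularna_rjesenja}, and the exceptional case $m=n=4$ being removed by Theorem \ref{teorem_izbacen_slucaj}), then play the Okazaki-type lower bound of Lemma \ref{okazaki} against the Matveev upper bound of Proposition \ref{prop_matveev}. However, there is a genuine quantitative gap in your last step. After correctly establishing $m_0\geq 6$, you revert to using only $m_0\geq 3$ and settle for the lower bound $m_2>Ac^{5/2}\log\eta$ with $A=1/2.7$, claiming the collision with Matveev is ``impossible already for $c>b>10^5$.'' It is not: for $c$ just above $10^5$ one has $m_2>\tfrac{c^{5/2}}{2.7}\log\eta\approx 1.3\cdot 10^{13}$, so the left-hand side of the Matveev inequality is about $\tfrac{1.3\cdot 10^{13}}{\log(38.92\cdot 1.3\cdot 10^{13})}\approx 4\cdot 10^{11}$, comfortably below the right-hand side $2.7717\cdot 10^{12}(\log c)^2\approx 3.7\cdot 10^{14}$. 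With exponent $5/2$ the contradiction only kicks in for $c$ around $10^7$, leaving the range $10^5<c\lesssim 10^7$ uncovered. The repair is immediate and is what the paper does: carry $m_0\geq 6$ into Lemma \ref{okazaki} to get
$$m_2>\kappa^{-1}(ac)^{6-\delta}\log\eta>(2.7\sqrt{ac})^{-1}(ac)^{6}\log\sqrt{bc}>c^5,$$
after which $\tfrac{c^5}{\log(38.92(c^5+1))}$ vastly exceeds $2.7717\cdot 10^{12}\log^2 c$ for all $c>10^5$.

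Two smaller points. First, your case-by-case constants are not all right: in case (iv) of Lemma \ref{lemma_kappa_linearna} one gets $\kappa^{-1}(ac)^{m_0-1}=\tfrac{c}{2.0001b}(ac)^{m_0-1}$, which via $c>ab$ is bounded below by $\tfrac{a^{m_0}c^{m_0-1}}{2.0001}$, i.e.\@ only $c^{m_0-1}$ in the worst case ($a=1$) — for $m_0=3$ that is $c^2$, below even your claimed $c^{5/2}$ floor; again this is harmless once $m_0\geq 6$ is used, since $a^6c^5/2.0001\cdot\log\eta>c^5$. Second, the appeal to Theorem \ref{teorem1.5} to control $\log\eta\log c$ is unnecessary: $\tfrac12\log c<\log\eta<\log c$ holds automatically since $\eta<c$ and $\eta>\sqrt{bc}>\sqrt{c}$ (the bound $c<39247b^4$ is already baked into the derivation of Proposition \ref{prop_matveev}).
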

\begin{proof}
Let us assume to the contrary, that $m_0>2$. From Lemmas \ref{donje na m i n} and \ref{regularna_rjesenja} we know that $m_0\geq 6$ and $m_2>\kappa^{-1}(ac)^{6-\delta}\log\eta>(2.7\sqrt{ac})^{-1}(ac)^6\log\sqrt{bc}>c^5$ by Lemmas \ref{okazaki} and \ref{lemma_kappa_linearna}. After observing that the left-hand side in the inequality of Proposition \ref{prop_matveev} is increasing in $m$ we get
$$\frac{c^5}{\log(38.92(c^5+1))}<2.7717\cdot 10^{12}\log^2 c.$$
This inequality cannot hold for $c>10^5$. We conclude that $m_0\leq 2$.
\end{proof}

The next result is deduced following the logic in \cite{nas2} (with only some technical details changed), and so we omit the proof.  
\begin{proposition}\label{padeova_tm8.1}
Let $a,b,c$ be integers with $0<a<b<c$ and let $a_1=4a(c-b),$ $a_2=4b(c-a)$, $N=abz^2$, where $z$ is a solution of the system od Pellian equations (\ref{prva_pelova_s_a}) and (\ref{druga_pelova_s_b}). Put $u=c-b$, $v=c-a$ and $w=b-a$. Assume that $N\geq 10^5a_2$. Then the numbers 
$$\theta_1=\sqrt{a+a_1/N},\quad \theta_2=\sqrt{1+a_2/N}$$
satisfy
$$\max\left\{\left|\theta_1-\frac{p_1}{q} \right|,\left|\theta_2-\frac{p_2}{q}\right|\right\}>\left(\frac{512.01a_1'a_2uN}{a_1}\right)^{-1}q^{-\lambda}$$
for all integers $p_1,p_2,q$ with $q>0$, where $a_1'=\max\{a_1,a_2-a_1\}$ and
$$\lambda=1+\frac{\log\left(\frac{256a_1'a_2uN}{a_1}\right)}{\log\left(\frac{0.02636N^2}{a_1a_2(a_2-a_1)uvw}\right)}.$$
\end{proposition}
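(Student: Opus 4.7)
The plan is to adapt the Rickert--Bennett--Dujella construction of simultaneous hypergeometric Padé approximations to the pair $(\theta_1,\theta_2)$, exactly along the lines of the analogous result in \cite{nas2}, adjusting only those numerical constants that are sensitive to the $D(4)$ normalization in $a_1=4a(c-b)$ and $a_2=4b(c-a)$.

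First, for each positive integer $k$, I would construct three polynomials $P_{k,0}(x), P_{k,1}(x), P_{k,2}(x)$ of degree $k$ in $\mathbb{Q}[x]$ that are the standard Padé approximants of type $(k,k)$ to the pair $\sqrt{1+a_1x/N}, \sqrt{1+a_2x/N}$. These are obtained from the Gauss hypergeometric function ${}_2F_1(-k,-k-\tfrac12;-2k;\cdot)$ and admit a contour-integral representation that forces the remainders $E_{k,i}(x):=P_{k,0}(x)\sqrt{1+a_ix/N}-P_{k,i}(x)$ to vanish to order $2k+1$ at $x=0$. Evaluating at $x=1$ and clearing binomial denominators by $D_k:=\mathrm{lcm}(1,2,\dots,2k+1)\le 3^{2k+1}$, I obtain integers $p_i^{(k)}=D_k P_{k,i}(1)$ and $q^{(k)}=D_k P_{k,0}(1)$.

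The heart of the argument is then the extraction of two explicit estimates. On one hand, a saddle-point estimate for the integral representation yields an upper bound
$$|q^{(k)}\theta_i-p_i^{(k)}|<C_1\Psi^{-k},\qquad \Psi:=\frac{0.02636\,N^2}{a_1a_2(a_2-a_1)uvw},$$
which is meaningful precisely because the hypothesis $N\ge 10^5 a_2$ forces $\Psi>1$ with considerable room to spare. On the other hand, a direct bound on the polynomial evaluation gives
$$|q^{(k)}|<C_2\Phi^k,\qquad \Phi:=\frac{256\,a_1'a_2uN}{a_1},$$
together with the crucial non-vanishing of the determinant $P_{k,0}(1)P_{k+1,i}(1)-P_{k+1,0}(1)P_{k,i}(1)$, which is checked from the three-term recurrence satisfied by the Padé numerators and the fact that $a_1\ne a_2$.

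With these ingredients in place, I would invoke the general extraction principle (the form appearing in \cite{rickert} and used explicitly in \cite[Lemma 7]{bf_parovi} and in \cite{nas2}): given any rational pair $p_1/q, p_2/q$ approximating $(\theta_1,\theta_2)$ with denominator $q$, pick the unique $k$ for which $2C_2\Phi^k\le q<2C_2\Phi^{k+1}$ and combine the linear-independence relations above to conclude the claimed lower bound with $\lambda=1+\log\Phi/\log\Psi$ and leading constant $512.01\,a_1'a_2uN/a_1$, the factor $2\cdot 256$ reflecting the pair of approximations handled simultaneously. The main obstacle is purely numerical: verifying that the constant $0.02636$ survives the translation from the $D(1)$ setting of \cite{nas2} to the present $D(4)$ setting with its extra factors of $4$ absorbed into $a_1,a_2$, and likewise that the coefficient $512.01$ absorbs the small rounding errors in the prime-power estimates for $D_k$ and in the contour-integral remainder bound. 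As the author remarks, no structural novelty beyond \cite{nas2} is needed.
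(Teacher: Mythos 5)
Your outline is exactly the argument the paper has in mind: the paper omits the proof entirely, stating that it "is proven as in \cite{nas2} with only some technical details changed," and the construction you describe (hypergeometric Padé approximants to $\sqrt{1+a_1x/N}$ and $\sqrt{1+a_2x/N}$, the remainder bound giving $\Psi$, the coefficient bound giving $\Phi$, the non-vanishing determinant, and the standard extraction principle from \cite{rickert} and \cite{bf_parovi}) is precisely that argument with the constants matching the statement. Note only that $\theta_1=\sqrt{a+a_1/N}$ in the statement is a typo for $\sqrt{1+a_1/N}$, which you have correctly read as intended.
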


\begin{lemma}\label{lema8.3}
Let $(x_{(i)},y_{(i)},z_{(i)})$ be positive solutions to the system of Pellian equations (\ref{prva_pelova_s_a}) and (\ref{druga_pelova_s_b}) for $i\in\{1,2\}$, and let $\theta_1$, $\theta_2$ be as in Proposition \ref{padeova_tm8.1} with $z=z_{1}$. Then we have
$$\max\left\{\left|\theta_1-\frac{acy_{(1)}y_{(2)}}{abz_{(1)}z_{(2)}}\right|,\left|\theta_2-\frac{bcx_{(1)}x_{(2)}}{abz_{(1)}z_{(2)}}\right|\right\}<\frac{2c^{3/2}}{a^{3/2}}z_{(2)}^{-2}.$$
\end{lemma}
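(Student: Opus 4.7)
The plan is to reduce the lemma to a short conjugation estimate by first giving $\theta_1$ and $\theta_2$ in closed form. Substituting $z=z_{(1)}$ into (\ref{prva_pelova_s_a}) and (\ref{druga_pelova_s_b}) rewrites $1+a_1/N$ as $cy_{(1)}^2/(bz_{(1)}^2)$ and $1+a_2/N$ as $cx_{(1)}^2/(az_{(1)}^2)$, yielding the clean formulas
$$\theta_1=\frac{y_{(1)}\sqrt{c}}{z_{(1)}\sqrt{b}},\qquad \theta_2=\frac{x_{(1)}\sqrt{c}}{z_{(1)}\sqrt{a}}.$$

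With these in hand, both target differences factor as $\theta_i$ times a correction; for instance
$$\theta_1-\frac{acy_{(1)}y_{(2)}}{abz_{(1)}z_{(2)}}=\theta_1\left(1-\frac{y_{(2)}\sqrt{c}}{z_{(2)}\sqrt{b}}\right),$$
so the second step is to control the parenthetical factor. I would do this by conjugation: multiplying numerator and denominator by $y_{(2)}\sqrt{c}+z_{(2)}\sqrt{b}$ and invoking (\ref{druga_pelova_s_b}) at $z_{(2)}$ turns the bracket into $4(c-b)/(z_{(2)}\sqrt{b}\,(y_{(2)}\sqrt{c}+z_{(2)}\sqrt{b}))$, and since $y_{(2)}\sqrt{c}\geq z_{(2)}\sqrt{b}$ the denominator exceeds $2bz_{(2)}^2$, giving $|1-y_{(2)}\sqrt{c}/(z_{(2)}\sqrt{b})|<2(c-b)/(bz_{(2)}^2)$; the analogous step with (\ref{prva_pelova_s_a}) bounds $|1-x_{(2)}\sqrt{c}/(z_{(2)}\sqrt{a})|$ by $2(c-a)/(az_{(2)}^2)$.

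The last ingredient is a uniform bound on $\theta_1,\theta_2$. From the Pellian equations one checks the identities $z_{(1)}^2-y_{(1)}^2=(c-b)(z_{(1)}^2-4)/c$ and $z_{(1)}^2-x_{(1)}^2=(c-a)(z_{(1)}^2-4)/c$, so both $y_{(1)}\leq z_{(1)}$ and $x_{(1)}\leq z_{(1)}$ follow from $z_{(1)}\geq 2$, which is automatic here since $z_{(i)}=\sqrt{cd_{(i)}+4}\geq\sqrt{c+4}$. This forces $\theta_1\leq\sqrt{c/b}$ and $\theta_2\leq\sqrt{c/a}$, and combining yields
$$\left|\theta_1-\frac{acy_{(1)}y_{(2)}}{abz_{(1)}z_{(2)}}\right|<\sqrt{c/b}\cdot\frac{2(c-b)}{bz_{(2)}^2}<\frac{2c^{3/2}}{b^{3/2}z_{(2)}^2}\leq\frac{2c^{3/2}}{a^{3/2}z_{(2)}^2},$$
together with the identical argument for the $\theta_2$ difference. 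None of the steps is delicate; the main thing to track is matching the correct Pellian equation with each $\theta_i$ and choosing the conjugate consistently, which makes the whole calculation a few lines of algebra.
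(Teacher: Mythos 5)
Your proof is correct and follows essentially the same route as the paper: both rewrite $\theta_1,\theta_2$ via the Pellian equations, rationalize the difference by multiplying with the conjugate $y_{(2)}\sqrt{c}+z_{(2)}\sqrt{b}$ (resp.\ $x_{(2)}\sqrt{c}+z_{(2)}\sqrt{a}$), and finish with $y_{(1)}\le z_{(1)}$, $x_{(1)}\le z_{(1)}$ and $c-b<c$, $c-a<c$. The only difference is presentational (you factor out $\theta_i$ explicitly), so nothing further is needed.
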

\begin{proof}
It is not hard to see that from  Proposition \ref{padeova_tm8.1} we have
\begin{eqnarray*}
\left|\sqrt{1+\frac{a_1}{N}}-\frac{p_1}{q}\right|&=\frac{y_{(1)}\sqrt{c}}{bz_{(1)}z_{(2)}}|z_{(2)}\sqrt{b}-y_{(2)}\sqrt{c}|&<\frac{4(c-b)\sqrt{c}y_{(1)}}{2b\sqrt{b}z_{(1)}z_{(2)}^2}<\frac{2c^{3/2}}{b^{3/2}}z_{(2)}^{-2}
\end{eqnarray*}
and similarly 
$$\left|\sqrt{1+\frac{a_2}{N}}-\frac{p_2}{q}\right|<\frac{2c^{3/2}}{a^{3/2}}z_{(2)}^{-2}.$$
\end{proof}

\begin{proposition}\label{prop_odnos_n1_n2}
Suppose that $\{a,b,c,d_i\}$ are $D(4)$-quadruples with $a<b<c<d_1<d_2$ and $x_{(i)},y_{(i)},z_{(i)}$ are positive integers such that $ad_i+1=x_{(i)}^2$, $bd_i+1=y_{(i)}^2$ and $cd_i=z_{(i)}^2$ for $i\in\{1,2\} $. 
 
 \begin{enumerate}[i)]
\item If $n_1\geq 8$, then $$n_2<\frac{(n_1+1.1)(3.5205n_1+4.75675)}{0.4795n_1-3.82175}-1.1.$$
More specifically,if $n_1=8$, $n_2<2628n_1$, and if $n_1\geq9$ then $n_2<83n_1$.
\item If $c>ab+a+b$ and $(z_0,z_1)=(t,s)$, $z_0z_1>0$ and $n_1\geq 9$ then $$n_2<\frac{(n_1+1)(2.5147n_1+5.11467)}{0.4853n_1-3.85292}-1<60n_1.$$
  \end{enumerate}
\end{proposition}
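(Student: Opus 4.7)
The plan is to combine Proposition \ref{padeova_tm8.1} (the hypergeometric/Padé lower bound) with Lemma \ref{lema8.3} (the rational approximation upper bound), taking $z=z_{(1)}$ so that the integer triple is $q=abz_{(1)}z_{(2)}$, $p_1=acy_{(1)}y_{(2)}$, $p_2=bcx_{(1)}x_{(2)}$. This produces
$$\frac{2c^{3/2}}{a^{3/2}}\,z_{(2)}^{-2}\;>\;\frac{a_1}{512.01\,a_1'a_2uN}\,(abz_{(1)}z_{(2)})^{-\lambda},$$
which rearranges to an inequality of the form
$$z_{(2)}^{2-\lambda}\;<\;\frac{1024.02\,c^{3/2}\,a_1'a_2uN}{a^{3/2}a_1}\,(abz_{(1)})^{\lambda}.$$
Provided $\lambda<2$, this upper-bounds $z_{(2)}$ in terms of $z_{(1)}$.

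Next I would estimate $\lambda$ from above. Using $N=abz_{(1)}^2$ and the general bounds $a<b<c$, $b>10^5$, the numerator in the definition of $\lambda$ is polynomial of a fixed low degree in $b,c,z_{(1)}$, while the denominator involves $N^2\asymp z_{(1)}^4$. Taking logarithms and using the lower bound on $z_{(1)}$ coming from $z_{(1)}\geq w_{n_1}>\tfrac{c}{2y_1}(t-1)^{n_1-1}$ (from the proof of Lemma \ref{lemaw4w5w6}), the ratio defining $\lambda-1$ approaches $1/2$ as $n_1\to\infty$ and I expect to get an inequality of the form $\lambda-1<\tfrac{An_1+B}{2An_1-B'}$ with explicit $A,B,B'$; this is where the numerical constants $3.5205,\,4.75675,\,0.4795,\,3.82175$ (and in case (ii) $2.5147,\,5.11467,\,0.4853,\,3.85292$) will emerge. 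The improvement in case (ii) follows because $(z_0,z_1)=(t,s)$ forces $x_0=y_1=r$, which shrinks both $N$ and the auxiliary factor $a_1'a_2uN/a_1$.

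To translate the bound on $z_{(2)}$ into the claimed bound on $n_2$, I would again use $z_{(2)}<cy_1 t^{n_2-1}$ and $z_{(1)}<cy_1 t^{n_1-1}$ from Lemma \ref{lemaw4w5w6}, so the inequality $z_{(2)}^{2-\lambda}<\mathrm{stuff}\cdot z_{(1)}^{\lambda}$ yields
$$(n_2-1)(2-\lambda)\log t<\lambda(n_1-1)\log t+O(\log c),$$
and solving for $n_2$ gives $n_2-1<\tfrac{\lambda}{2-\lambda}(n_1-1)+\text{correction}$, which with the explicit bound on $\lambda$ recovers the stated rational function in $n_1$. The asymptotic slopes $3.5205/0.4795\approx 7.3$ (case i) and $2.5147/0.4853\approx 5.2$ (case ii) then degrade to $83n_1$ and $60n_1$ after absorbing the lower-order correction for $n_1\geq 9$, and to $2628n_1$ at the boundary $n_1=8$ where $0.4795n_1-3.82175$ is small.

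The main obstacle will be keeping the estimate on $\lambda$ sharp enough. In particular, the threshold for $\lambda<2$ forces $n_1\geq 8$ (or $n_1\geq 9$ in case (ii)); below this we cannot conclude. Careful tracking of the logarithmic ratio \emph{together with} the use of $b>10^5$, $c>ab+a+b$ (in case (ii)), and the explicit lower bound on $t-1$ and upper bound on $y_1$, is needed to make $0.4795n_1-3.82175>0$ and to push the constants down from the naive $\lambda$ close to $2$. The rest is routine bookkeeping of inequalities, analogous to \cite[Proposition 8.4]{nas2}, and I would simply adapt that computation with the sharper inputs available here.
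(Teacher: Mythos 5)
Your proposal follows essentially the same route as the paper's proof: combining Proposition \ref{padeova_tm8.1} with Lemma \ref{lema8.3} for $z=z_{(1)}$ to get $z_{(2)}^{2-\lambda}\ll z_{(1)}^{\lambda+2}$, bounding $2-\lambda$ from below by $\frac{0.4795n_1-3.82175}{n_1-2}$ via the growth estimates on $w_{n_1}$, and then converting the resulting bound $z_{(2)}<z_{(1)}^{\sigma}$ into the stated bound on $n_2$. The only (immaterial) difference is in the final conversion, where the paper uses an exact product identity for $w_{n}$ rather than your logarithmic estimate with an $O(\log c)$ correction.
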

\begin{proof}
Put $N=abz^2$, $p_1=acy_{(1)}y_{(2)}$, $p_2=bcx_{(1)}x_{(2)}$ and $q=abz_{(1)}z_{(2)}$ in Proposition \ref{padeova_tm8.1} and Lemma \ref{lema8.3}. We get 
$$z_{(2)}^{2-\lambda}<4096a^{\lambda-3/2}b^{\lambda+3}c^{7/2}z_{(1)}^{\lambda+2}.$$
We use estimates for fundamental solutions from Lemma \ref{granice_fundamentalnih} and the inequality from the proof of Lemma \ref{lemaw4w5w6} which gives us
$$0.49999\cdot0.99999^{n_1-1}(bc)^{\frac{n_1-1}{2}-\frac{1}{4}}c<w_{n_1}<1.000011\cdot 1.00001^{n_1-1}(bc)^{\frac{n_1-1}{2}+\frac{1}{4}}c.$$
Since $z_{(1)}=w_{n_1}$, we use this inequality to show that
$$\frac{256a_1'a_2uN}{a_1}<(1.00002bc)^{n_1+3.5}$$
and
$$\frac{0.02636N^2}{a_1a_2(a_2-a_1)uvw}>(0.41bc)^{2n_1-4},$$
where we have used the assumption $n_1\geq 8$. So 
$$2-\lambda>\frac{0.4795n_1-3.82175}{n_1-2}.$$
Now we can show that 
$$z_{(2)}^{0.4795n_1-3.82175}<4096^{n_1-2}(bc)^{4.0205n_1-4}z_{(1)}^{3.5205n_1-4.17825}.$$
On the other hand 
$$z_{(1)}>1.2589^{n_1-1}(bc)^{0.49n_1-0.24}.$$
By combining these inequalities we get 
$$z_{(2)}<z_{(1)}^{\sigma},$$
where $\sigma=\frac{3.5205n_1+4.75675}{0.4795n_1-3.82175}$. If $n_2\geq n_1\sigma+1.1(\sigma-1)$, similarly as in \cite{fm}, we would get a contradiction from
$$\frac{z_{(2)}}{z_{(1)}^{\sigma}}=\left(\frac{2\sqrt{b}}{y_{(1)}\sqrt{c}+z_{(1)}\sqrt{b}}\right)^{\sigma-1}\xi^{n_2-n_1\sigma}\frac{1-A\xi^{-2n_1\sigma}}{\left(1-A\xi^{-2n_1}\right)^{\sigma}}>1,$$
where $A=\frac{y_{(1)}\sqrt{c}-z_{(1)}\sqrt{b}}{y_{(1)}\sqrt{c}+z_{(1)}\sqrt{b}}$ and $\xi=\frac{t+\sqrt{bc}}{2}$ as before. So, $n_2< n_1\sigma+1.1(\sigma-1)$ must hold, which proves the first statement. 

The second statement is proven analogously by using the inequality
$$0.7435\cdot(ab)^{-1/2}c(t-1)^{n_1-1}<w_{n_1}<1.0001(ab)^{1/2}t^{n_1-1}.$$
Notice that in this case we didn't use Lemma \ref{granice_fundamentalnih} since we know explicitly values $(z_0,z_1)=(t,s)$.
\end{proof} 

\bigskip
We now consider a linear form in two logarithms 
\begin{align*}\Gamma=\Lambda_2-\Lambda_1&=j\log\frac{s+\sqrt{ac}}{2}-k\log\frac{t+\sqrt{bc}}{2}\\
&= (m_2-m_1)\log\frac{s+\sqrt{ac}}{2}-(n_2-n_1)\log\frac{t+\sqrt{bc}}{2}\\
&=: (m_2-m_1)\log\xi-(n_2-n_1)\log\eta
\end{align*}
for which we know that $\Gamma\neq 0$ since it is not hard to show that $\xi$ and $\eta$ are multiplicatively independent. 

From Lemma \ref{lemma_kappa_linearna} we have that $0<\Lambda_i<\kappa\xi ^{-2m_1}$ for $i=1,2$, so
$$0<|\Gamma|<\kappa \xi^{-2m_1}.$$
We can now use Laurent's theorem from \cite{laurent} to find a lower bound on $|\Gamma|$, similarly as in \cite{nas2} and \cite{fm}.
\begin{theorem}[Laurent]\label{laurent}
Let $\gamma_1$ and $\gamma_2$ be multiplicatively independent algebraic numbers with $|\gamma_1|\geq 1$ and $|\gamma_2|\geq 1$. Let $b_1$ and $b_2$ be positive integers. Consider the linear form  in two logarithms 

$$\Gamma=b_2 \log \gamma_2-b_1 \log \gamma_1,$$
where $\log \gamma_1$ and $\log \gamma_2$ are any determinations of the logarithms of $\gamma_1$ and $\gamma_2$ respectively. Let $\rho$ and $\mu$ be real numbers with $\rho>1$ and $1/3\leq \mu \leq 1$. Set 
$$\sigma=\frac{1+2\mu-\mu^2}{2},\quad \lambda=\sigma \log \rho.$$
Let $a_1$ and $a_2$ be real numbers such that 
\begin{align*}
&a_i\geq \max\{1,\rho|\log\gamma_i|-\log|\gamma_i|+2Dh(\gamma_i)\},\quad i=1,2,\\
&a_1a_2\geq \lambda^2,
\end{align*}
where $D=[\mathbb{Q}(\gamma_1,\gamma_2):\mathbb{Q}]/[\mathbb{R}(\gamma_1,\gamma_2):\mathbb{R}]$. Let $h$  be a real number such that
$$h \geq \max \left\{ D\left(\log\left( \frac{b_1}{a_2'}+\frac{b_2}{a_1'}\right)+\log \lambda'+1.75\right)+0.06,\lambda',\frac{D\log 2}{2}\right\}+\log\rho.$$
Put 
$$H=\frac{h}{\lambda},\quad
\omega=2\left( 1+\sqrt{1+\frac{1}{4H^2}}\right),\quad \theta=\sqrt{1+\frac{1}{4H^2}}+\frac{1}{2H}.$$

Then
$$\log|\Lambda|\geq -C\left(h'+\frac{\lambda'}{\sigma} \right)^2a_1'a_2'-\sqrt{\omega \theta}\left(h'+\frac{\lambda'}{\sigma} \right)-\log\left(C' \left(h'+\frac{\lambda'}{\sigma} \right)^2a_1'a_2' \right)$$
with $C=C_0\mu/(\lambda^3\sigma)$ and $C'=\sqrt{C_0\omega\theta/\lambda^6}$
\begin{align*}
&C_0=\left(\frac{\omega}{6}+\frac{1}{2}\sqrt{\frac{\omega^2}{9}+\frac{8\lambda\omega^{5/4}\theta^{1/4}}{3\sqrt{a_1a_2}H^{1/2}}+\frac{4}{3}\left( \frac{1}{a_1}+\frac{1}{a_2}\right)\frac{\lambda\omega}{H}   } \right)^2.
\end{align*}
\end{theorem}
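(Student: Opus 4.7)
The plan is to follow Laurent's interpolation determinant method, which replaces the classical Gel'fond--Baker construction of an auxiliary function by a direct estimation of a specific square determinant. One forms a matrix whose rows are indexed by multi-indices $(\tau_1,\tau_2)$ with $0\le\tau_1<K$, $0\le\tau_2<T$, whose columns are indexed by integer points $(\ell_1,\ell_2)$ in a rectangle $\{0,\dots,L_1\}\times\{0,\dots,L_2\}$, and whose entry in row $(\tau_1,\tau_2)$ and column $(\ell_1,\ell_2)$ is the $\tau_1$-th derivative of the function $z\mapsto z^{\tau_2}\gamma_1^{\ell_1 z}\gamma_2^{\ell_2 z}$ evaluated at an integer point of the form $z=\ell_1 b_2+\ell_2 b_1$. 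The integers $K,T,L_1,L_2$ are tuned in terms of $a_1,a_2,h,\rho,\mu$ to balance the three estimates below, and the matrix is arranged to be square.

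Three bounds on the determinant $\Delta$ then combine to force a lower bound on $|\Gamma|$. First, an \emph{arithmetic} upper bound: after clearing the denominators of the algebraic entries and applying Hadamard's inequality, one obtains an estimate of the shape
\[
\log|\Delta|\le P(K,T,L_1,L_2)(a_1+a_2)+\text{lower order terms},
\]
for an explicit polynomial $P$, where the heights $h(\gamma_1),h(\gamma_2)$ enter through $a_1,a_2$. Second, an \emph{analytic} upper bound: using the identity $b_2\log\gamma_1=b_1\log\gamma_2+\Gamma$ to rewrite the entries and Taylor-expanding in $\Gamma$, suitable row operations eliminate the first $M$ terms of each expansion, yielding $|\Delta|\le C\,|\Gamma|^M$ for an $M$ of order $KTL_1L_2$. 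Third, a \emph{nonvanishing} statement: the multiplicative independence of $\gamma_1$ and $\gamma_2$ combined with a Philippon-type zero estimate (or, in this two-variable setting, a tailored Vandermonde argument) shows $\Delta\neq 0$ for suitable parameter choices.

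Comparing the arithmetic and analytic bounds yields an inequality $M\log|\Gamma|^{-1}\le P(K,T,L_1,L_2)(a_1+a_2)+O(1)$, which rearranges to the advertised lower bound on $\log|\Gamma|$. The main obstacle, and the reason the statement contains the intricate auxiliary quantities $H,\omega,\theta,C_0$ rather than merely a qualitative inequality, is the optimization over the free parameters. The aspect ratio of the rectangle $L_1\times L_2$ and the ratio $K/T$ must be tuned so that the leading term of the arithmetic bound matches the vanishing order $M$; the parameter $\mu\in[1/3,1]$ interpolates between two extremal regimes, and $\rho>1$ governs the admissible size of $a_i$ relative to the height contribution. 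The final numerical constants emerge by choosing these parameters to minimize the dominant $(h'+\lambda'/\sigma)^2 a_1'a_2'$ term while keeping the lower-order $\sqrt{\omega\theta}(h'+\lambda'/\sigma)$ correction and the logarithmic contribution $\log\bigl(C'(h'+\lambda'/\sigma)^2 a_1'a_2'\bigr)$ under simultaneous control, which is precisely the delicate bookkeeping that Laurent's formulation encapsulates.
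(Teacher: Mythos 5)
This statement is not proved in the paper at all: it is Laurent's theorem, imported verbatim by citation from \emph{Linear forms in two logarithms and interpolation determinants II} (Acta Arith.\ 133 (2008)), and the author uses it as a black box in Proposition \ref{prop_primjena_laurenta}. So there is no internal proof to compare against; the only question is whether your proposal would itself constitute a proof.

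It would not. What you have written is an accurate survey of the interpolation-determinant strategy — a square matrix built from monomials in $\gamma_1^z$, $\gamma_2^z$ evaluated at the integer points $\ell_1 b_2+\ell_2 b_1$, an arithmetic upper bound via denominators and Hadamard, an analytic upper bound of the form $|\Delta|\le C|\Gamma|^{M}$ obtained by Taylor-expanding in $\Gamma$, and a nonvanishing statement from multiplicative independence — but every quantitative element of the theorem is deferred. The statement being proved \emph{is} its explicit constants: the precise choice of the matrix dimensions $K,T,L_1,L_2$ as functions of $a_1,a_2,h,\rho,\mu$; the verification that the hypotheses $a_1a_2\ge\lambda^2$ and the lower bound on $h$ suffice for the zero estimate; and the derivation of $H$, $\omega$, $\theta$ and of $C_0$ in exactly the form $\left(\frac{\omega}{6}+\frac{1}{2}\sqrt{\cdots}\right)^2$. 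Your closing sentence explicitly acknowledges that this "delicate bookkeeping" is "precisely what Laurent's formulation encapsulates," i.e.\ you assume the conclusion of the optimization rather than carrying it out. In addition, the nonvanishing step in the two-logarithm case is not a routine Vandermonde argument for arbitrary parameters; it requires either Laurent's specific combinatorial lemma or a zero estimate whose hypotheses must be checked against the stated constraints on $h$ and $a_1a_2$, and that verification is absent. As a result the proposal is a correct description of the method but not a proof of the theorem; for the purposes of this paper the honest course is the one the author takes, namely to cite Laurent's result.
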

\begin{proposition}\label{prop_primjena_laurenta}
If  $z=v_{m_i}=w_{n_i}$ $(i\in \{1,2\})$ has a solution, then
$$\frac{2m_1}{\log \eta}<\frac{C_0'\mu}{\lambda^3\sigma}(\rho+3)^2h^2+\frac{2\sqrt{\omega\theta}h+2\log\left(\sqrt{C_0'\omega\theta}\lambda^{-3}(\rho+3)^2\right)+4\log h}{(\log(10^5))^2}+1,$$
where $\rho=8.2$, $\mu=0.48$
$$C_0'=\left(\frac{\omega}{6}+\frac{1}{2}\sqrt{\frac{\omega^2}{9}+\frac{16\lambda\omega^{5/4}\theta^{1/4}}{3(\rho+3)H^{1/2}\log(10^5)}+\frac{16\lambda \omega}{3(\rho+3)H\log(10^5)}}\right)^2,$$
 $h=4\log\left( \frac{2j}{\log\eta}+1 \right)+4\log\left(\frac{\lambda}{\rho+3}\right)+7.06+\log\rho$
 and $\sigma,\lambda,H,\omega,\theta$ are as in Theorem \ref{laurent}. 
\end{proposition}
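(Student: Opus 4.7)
The plan is to apply Laurent's Theorem \ref{laurent} to $\Gamma=(m_2-m_1)\log\xi-(n_2-n_1)\log\eta$ in order to obtain a lower bound on $\log|\Gamma|$, and to combine this with the upper bound $\log|\Gamma|<\log\kappa-2(m_1-\delta)\log\xi$ that is obtained by subtracting the two estimates $\Lambda_i\in(0,\kappa\xi^{-2(m_i-\delta)})$ from Lemma \ref{lemma_kappa_linearna}.

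In the notation of Theorem \ref{laurent} take $\gamma_1=\eta$, $\gamma_2=\xi$, $b_1=n_2-n_1$, $b_2=m_2-m_1$. Since $\xi$ and $\eta$ are real algebraic units of degree $2$ with minimal polynomials $X^2-sX+1$ and $X^2-tX+1$, we have $h(\xi)=\tfrac12\log\xi$, $h(\eta)=\tfrac12\log\eta$, and generically $D=[\mathbb{Q}(\xi,\eta):\mathbb{Q}]=4$. With the choices $\rho=8.2$ and $\mu=0.48$, set $a_1=(\rho+3)\log\eta$ and $a_2=(\rho+3)\log\xi$; these equal $\rho\log\gamma_i-\log\gamma_i+2Dh(\gamma_i)$ exactly, and satisfy $a_1a_2\geq\lambda^2$ since the hypothesis $b,c>10^5$ forces both logarithms to be large. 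The relation $b_1\log\eta=b_2\log\xi-\Gamma$ together with $\log\xi\leq\log\eta$ yields $b_1/a_2+b_2/a_1\leq 2j/((\rho+3)\log\eta)$, so the $h$ prescribed in the statement (whose form already incorporates Laurent's shift $\lambda/\sigma=\log\rho$, and whose ``$+1$'' inside the logarithm absorbs the small error from this approximation) satisfies the required hypothesis $h\geq D(\log(b_1/a_2'+b_2/a_1')+\log\lambda'+1.75)+0.06+\log\rho$.

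Plugging Laurent's lower bound into the upper bound, rearranging, and dividing by $\log\xi\log\eta$, one uses $a_1a_2=(\rho+3)^2\log\xi\log\eta$ to extract the main term $(C_0\mu/(\lambda^3\sigma))(\rho+3)^2h^2$. The uniform lower bounds $\log\xi\geq\tfrac12\log(10^5)$ and $\log\eta\geq\log(10^5)$, which follow from $a\geq 1$ and $b,c>10^5$, allow one to bound $1/\sqrt{a_1a_2}$ and $1/a_1+1/a_2$ inside $C_0$ and thereby replace $C_0$ by the uniform constant $C_0'$ of the statement. The same bounds absorb the $\log\xi\log\eta$ in the denominators of the lower order terms into $(\log(10^5))^2$ (with a factor of $2$, which gets distributed into the numerators as written). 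The residual contributions $2\delta/\log\eta$ and $\log\kappa/(\log\xi\log\eta)$, each bounded by $1/\log(10^5)$, together with the rounding produced by replacing $m_1-\delta$ by $m_1$, are collected into the final additive ``$+1$''.

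The main obstacle is the careful bookkeeping in matching Laurent's multi-parameter expression to the clean form of the statement, in particular (i) the identification of the statement's $h$ with Laurent's $h+\lambda/\sigma$ so that the factor $(h+\lambda/\sigma)^2$ becomes $h^2$ without double-counting the constant $\log\rho$, and (ii) verifying that the substitutions $a_1,a_2\geq(\rho+3)\cdot\tfrac12\log(10^5)$ used to bound $C_0$ from above by $C_0'$ are valid uniformly in all four cases of Lemma \ref{lemma_kappa_linearna}. The multiplicative independence of $\xi$ and $\eta$, and the generic value $D=4$, are standard; should $[\mathbb{Q}(\xi,\eta):\mathbb{Q}]=2$ occur in a degenerate subcase, the same scheme applies after replacing $\rho+3$ with $\rho+1$, so the stated bound still holds.
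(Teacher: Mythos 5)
Your proposal follows exactly the route of the paper's (very terse) proof: apply Laurent's theorem to $\Gamma=\Lambda_2-\Lambda_1$ with $a_i=(\rho+3)\log\gamma_i$ and the prescribed $h$, replace $C_0$ by the uniform $C_0'$ via the lower bounds $\log\eta>\log(10^5)$, $\log\xi>\tfrac12\log(10^5)$, and combine with the upper bound from Lemma \ref{lemma_kappa_linearna}; your bookkeeping of the $h$ versus $h+\lambda/\sigma$ normalization and of the residual terms absorbed into the final $+1$ matches what the paper leaves implicit. No substantive difference or gap.
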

\begin{proof}
Similarly as in \cite{nas2} we can take $a_i=(\rho+3)\log\gamma_i$, $i=1,2$, $h=4\log\left(\frac{2j}{\log\gamma_1}\right)+4\log\left(\frac{\lambda}{\rho+3}\right)+7.06+\log\rho$ which yields $C_0<C_0'$ as defined in the statement of the proposition. Now, we finish the proof by combining Theorem \ref{laurent} and Lemma \ref{lemma_kappa_linearna}. 
\end{proof}

\bigskip
\begin{proof}[Proof of Theorem \ref{teorem_prebrojavanje}]
Let $\{a,b,c\}$  be a $D(4)$-triple and let $N(z_0,z_1)$ denote the number of nonregular solutions of the system (\ref{prva_pelova_s_a}) and (\ref{druga_pelova_s_b}), i.e.\@ the number of integers $d>d_+$ which extend that triple to a quadruple and  which correspond to the same fundamental solution $(z_0,z_1)$. From Proposition \ref{prop_okazaki_rjesenja} we know that if we have $3$ possible solutions $m_0$, $m_1$, $m_2$ with the same fundamental solution $(z_0,z_1)$ then $m_0\leq 2$, so from Lemma \ref{donje na m i n} we know that $N(z_0,z_1)\leq 2$ for each possible pair $(z_0,z_1)$ in Theorem \ref{teorem_izbacen_slucaj}. Also, from the same theorem, we know that the case when $m$ is odd and $n$ is even cannot occur when $d>d_+$. So, if we denote with $N_{eo}$ the  number of solutions $d>d_+$ when $m$ is even and $n$ is odd, and similarly for other cases, the number of extensions of a $D(4)$-triple to a $D(4)$-quadruple with $d>d_+$ is equal to 
$$N=N_{ee}+N_{eo}+N_{oo}.$$

\medskip
\textbf{Case $iv)$}\par
This follows from Theorem \ref{teorem1.5}.

\medskip
\textbf{Case $i)$}\par 
Since $c=a+b+2r<4b$, only the case $|z_0|=|z_1|=2$ can hold as explained before Proposition \ref{prop_matveev}. This implies
$$N=N_{ee}=N\left(2,2\right) + N( -2,-2).$$
We now show that $N(2,2)\leq 1$.
Assume to the contrary, that $N(2,2)=2$. Since $\frac{1}{2}(st-cr)=2$, then $(m_0,n_0)=(2,2)$ is a solution in this case. Beside that solution, there exist two more solutions $z=v_{m_i}=w_{n_i}$, $(m_i,n_i)$, $i=1,2$, such that $2=m_0<m_1<m_2$ and $m_1\geq 8$ and $n_1\geq 8$ by Lemma \ref{donje na m i n}. From Lemmas \ref{lemma_kappa_linearna} and \ref{okazaki} we have $\kappa=6$, $\Delta\geq 4$ and $m_2-m_1>\kappa^{-1}(ac)^{m_0}\delta\log \eta$, i.e.
$$\frac{j}{\log \eta}=\frac{m_2-m_1}{\log\eta}>\frac{4}{6}(ac)^2>6.66\cdot 10^9.$$
On the other hand, $n_1\geq 8$ and Proposition \ref{prop_odnos_n1_n2} gives us
$$m_2\leq 2n_2+1\leq 2\cdot 2628n_1\leq 5256m_1+5256\leq5913m_1,$$
so $j=m_2-m_1\leq 5912m_1$.
Using Proposition \ref{prop_primjena_laurenta} yields $\frac{j}{\log\eta}<5.71\cdot 10^8$, which is a contradiction.

This proves that $N=N\left(2,2\right) + N \left( -2,-2\right)\leq 3$.

\medskip
\textbf{Cases $ii)$ and $iii)$}\par
Since $c\neq a+b+2r$, we have $c>ab+a+b$. Let $N'(z_0,z_1)$  denote a number of solutions of equation $v_m=w_n$, with $m>2$ and $b>10^5$, but without assuming that inequalities from Lemma \ref{granice_fundamentalnih} hold. Then, by Lemma \ref{lema_nizovi}  and Proposition \ref{prop_m_n_9} we have 
$$N\leq N(-2,-2)+N(2,2)+N'(z_0^{-},z_1^{-})+N'(z_0^+,z_1^+)$$
where 
$(z_0^+,z_1^+)\in \left\{ \left(\frac{1}{2}(st-cr),\frac{1}{2}(st-cr)\right), \left(t,\frac{1}{2}(st-cr)\right),\right.$ $\left. \left(\frac{1}{2}(st-cr),s\right),\right.$ $\left.\left(t,s\right)  \right\}$ and
$(z_0^{-},z_1^{-})=(-z_0^{+},-z_1^{+})$.

It is not hard to see that the previous proof for $N\left(2,2\right)\leq 1$ didn't depend for the element $c$, so it holds in this case too. We will now show that $N'(-t,-s)\leq 2$ which by Lemma \ref{lema_nizovi} implies $N'(z_0^{-},z_1^{-})\leq 2$, and $N'(t,s)\leq 2$ for $c<b^2$ and $N'(t,s)\leq 1$ for $c>b^2$, which will prove our statements in these last two cases. 

Let us assume the contrary, that $N'(-t,-s)\geq 3$ (there exist $m_0,m_1,m_2$, $2<m_0<m_1<m_2$). By Proposition \ref{prop_m_n_9} we know that $m_0\geq 9$. From Lemma \ref{lemma_kappa_linearna} we have $\Lambda<2.0001\frac{b}{c}\xi^{-2(m-1)}$ which can be used in Lemma \ref{okazaki} to get 
$$m_2>m_2-m_1>1.999c^8\log\eta>1.999c^8\log \sqrt{c}.$$
Now, we can use Proposition \ref{prop_matveev} with $B(m_2)=m_2+1$, $\log\eta<\log(c/2)$. This gives us an inequality
$$\frac{m_2}{\log (38.92(m_2+1) )}<2.81\cdot 10^{12}\log c\log (c/2).$$
The left-hand side is increasing in $m_2$ so we can solve the inequality in $c$ which yields $c<56$, a contradiction to $c>b>10^5$.

Now, let us prove that $N'(t,s)\leq 1$. Assume to the contrary that $N'(t,s)\geq 2$ and for some $3$ solutions $ 1\leq m_0<m_1<m_2$ we also have $2<m_1$ ($m_0$ is associated with a regular solution). Then by Lemma \ref{lemma_kappa_linearna}, since $\Delta\geq 1$, $c>4b$ and $b>10^5$, we get
\begin{equation}\label{jbdazadnja}\frac{m_2-m_1}{\log\eta}>2ab(\sqrt{ac})^2>8b^2>8\cdot 10^{10}.\end{equation}
On the other hand, as in the proof of \cite[Lemma 7.1]{fm}, it can be shown that $\frac{n_2-n_1}{m_2-m_1}>\frac{\log \xi}{\log \eta}$, which together with Proposition \ref{prop_odnos_n1_n2}  implies
$$\frac{m_2-m_1}{\log \eta}<\frac{n_2-n_1}{\log\xi}<\frac{f(n_1)n_1}{\log\xi},$$
where $f(n_1)=\frac{2.0294n_1+8.96759}{0.4853n_1-3.85292}\left(1+\frac{1}{n_1}\right)$. These two inequalities yield
$n_1>9.34047\cdot 10^{11}$ and $f(n_1)\leq 4.1818$.

From $0<\Lambda_1<m_1\log\xi-n_1\log\eta+\log\mu$ we have

$$\frac{m_1}{\log\eta}>\frac{m_2-m_1}{f(n_1)\log\eta}-\frac{\log\mu}{\log\eta\log\xi}>\frac{m_2-m_1}{f(n_1)\log\eta}-1.$$ So, we can use Proposition \ref{prop_primjena_laurenta} and the inequality
$$
\resizebox{\textwidth}{!}{$\frac{m_2-m_1}{f(n_1)\log\eta}<\frac{C_0'\mu}{\lambda^2\sigma}h^2(\rho+3)^2+\frac{2\sqrt{\omega\theta}h+2\log\left(\sqrt{C_0'\omega\theta}\lambda^{-3}(\rho+3)^2\right)+4\log h}{(\log(10^5))^2}+2$}
$$

 yields $\frac{m_2-m_1}{\log\eta}<152184$ which is in a contradiction to (\ref{jbdazadnja}). So we must have $N'(t,s)\leq 1$.
 
It remains to prove that $N'(-t,-s)\leq 1$ for $c>b^2$. Again, let us assume to the contrary, that there are at least $2$ solutions $m_1<m_2$  besides a solution $(m_0,n_0)=(1,1)$ (which gives $d=d_{-}(a,b,c)$). Then
$$\frac{m_2-m_1}{\log\eta}>2.0001^{-1}\frac{c}{b}(\xi)^{2(m_0-1)}\Delta>1.99\cdot 10^5.$$
After repeating steps as in the previous case, we get $f(n_1)<4.1819$ and Proposition \ref{prop_primjena_laurenta} yields $\frac{m_2-m_1}{\log\eta}<152184$, a contradiction. 

So, when $c>b^2$ we have $N\leq 2+1+2+2=7$ and when $a+b+2r\neq c<b^2$ we have $N\leq 2+1+2+1=6$. 
\end{proof}

\section{Extension of a pair}
 For completeness, to give all possible results similar to the ones in \cite{fm} and \cite{cfm}, we have also considered extensions of a pair to a triple and estimated the number of extensions to a quadruple in such cases. Extensions of a pair to a triple were considered in \cite{bf_parovi} for the $D(4)$ case. Ba\'{c}i\'{c} and Filipin have shown that a pair $\{a,b\}$ can be extended to a triple with a $c$ given by 
$$
\resizebox{\textwidth}{!}{$c=c_{\nu}^{\pm}=\frac{4}{ab}\left\{\left(\frac{\sqrt{b}\pm\sqrt{a}}{2}\right)^2\left(\frac{r+\sqrt{ab}}{2}\right)^{2\nu}+\left(\frac{\sqrt{b}\mp\sqrt{a}}{2}\right)^2\left(\frac{r-\sqrt{ab}}{2}\right)^{2\nu}-\frac{a+b}{2}\right\}$}
$$
where $\nu\in\mathbb{N}$. These extensions are derived from the fundamental solution $(t_0,s_0)=(\pm2,2)$ of the Pell equation 
$$at^2-bs^2=4(a-b),$$
associated with the problem of an extension of a pair to a triple. Under some conditions for the pair $\{a,b\}$ we can prove that these fundamental solutions are the only ones.

The next result is an improvement of \cite[Lemma 1]{bf_parovi}.
 
 \begin{lemma}\label{lema_b685a}
 Let $\{a,b,c\}$ be a $D(4)$-triple. If $a<b<6.85a$ then $c=c_{\nu}^{\pm}$ for some $\nu$.
 \end{lemma}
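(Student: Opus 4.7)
The plan is to analyze fundamental solutions of the Pell equation
\[
bs^2 - at^2 = 4(b-a),
\]
which governs the extension of the pair $\{a,b\}$ to a $D(4)$-triple (cf.\ equation (\ref{par_trojka})), and to show that under $a<b<6.85a$ the only classes of integer solutions are those with fundamental solutions $(s_0,t_0)=(2,\pm 2)$, which, via the recursion $s\sqrt{b}+t\sqrt{a}=(s_0\sqrt{b}+t_0\sqrt{a})\bigl(\tfrac{r+\sqrt{ab}}{2}\bigr)^{\nu}$, yield precisely $c\in\{c_{\nu}^{\pm}:\nu\in\mathbb{N}\}$.

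First, I would invoke the classical Nagell--Mordell theory for generalized Pell equations: in each class one can select a fundamental representative $(s_0,t_0)$ with
\[
0<s_0\le \sqrt{\tfrac{2(b-a)(r+2)}{b}},\qquad 0\le |t_0|\le \sqrt{\tfrac{2(b-a)(r-2)}{a}},
\]
where $r=\sqrt{ab+4}$. Under the hypothesis $b<6.85a$ we have $r<\sqrt{6.85}\,a+O(1/a)$, so both bounds become explicit numerical expressions in $a$; combined with $s_0,t_0\in\mathbb{Z}$ and the identity $bs_0^2-at_0^2=4(b-a)$, they cut the candidate set down to a short finite list. A direct verification handles the degenerate situation $s_0=2$: then $4b-at_0^2=4(b-a)$ forces $t_0=\pm 2$, recovering the trivial classes.

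Next, for candidates with $s_0\ge 3$, I would exploit the divisibility conditions obtained by reducing the Pell identity modulo $a$ and modulo $b$, namely $bs_0^2\equiv 4(b-a)\pmod{a}$ and $at_0^2\equiv -4(b-a)\pmod{b}$. Writing $b=ka$ with $1<k<6.85$, the first congruence yields $s_0^2\equiv -4k\pmod{a/\gcd(a,\,\ldots)}$ and constrains the admissible ratios $k$; each surviving $(s_0,t_0,k)$-triple gives, via $t_0^2=(bs_0^2-4(b-a))/a$, a specific value of $k$ and hence a specific line of potential $D(4)$-pairs $(a,ka)$, which I would eliminate either by arithmetic (the required $ab+4$ is not a square infinitely often) or by showing that the resulting $c=(s_0^2-4)/a$ already coincides with some $c_{\nu}^{\pm}$, i.e.\ $(s_0,t_0)$ is not actually fundamental.

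The main obstacle is the borderline regime $b/a\approx 6.85$: here the size bounds on $(s_0,t_0)$ are loosest, so more candidate pairs survive, and the congruence analysis alone is not immediately decisive. I expect the constant $6.85$ to be calibrated precisely so that the next potential fundamental solution (likely one with $s_0\in\{3,4\}$ and $|t_0|$ correspondingly small) is ruled out by a narrow margin, after combining the Pell identity with the requirement that $\{a,b\}$ be a genuine $D(4)$-pair (so that $ab+4$ is a perfect square). This refinement of the Nagell-type bound over \cite{bf_parovi} is precisely what enlarges the admissible range from a weaker threshold to $b<6.85a$.
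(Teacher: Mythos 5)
There is a genuine gap. Your plan rests on the claim that the Nagell--Mordell bounds on a fundamental solution of $bs^2-at^2=4(b-a)$ ``cut the candidate set down to a short finite list.'' They do not: under $b<6.85a$ those bounds give $s_0=O(\sqrt{a})$ and $|t_0|=O(\sqrt{a})$, so the number of candidate classes grows without bound as $a\to\infty$, and the subsequent congruence elimination is left entirely heuristic (``I would eliminate either by arithmetic \dots or by showing \dots''). The Remark following the lemma in the paper, exhibiting the pair $(a,b)=(4620,31680)$ with fundamental solution $(s_0,t_0)=(68,178)$ and an exceptional $c$, shows that genuinely large fundamental solutions do occur just above the threshold, so no argument based purely on size bounds for $(s_0,t_0)$ plus ad hoc congruences can be expected to close; the full arithmetic structure of the triple must enter.

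The paper's actual mechanism is a descent, following \cite[Lemma 1]{bf_parovi}. One sets $s'=\frac{rs-at}{2}$, $t'=\frac{rt-bs}{2}$, $c'=\frac{(s')^2-4}{a}$; the cases $c'>b$, $c'=b$, $c'=0$ already force $c=c_\nu^{\pm}$. In the remaining case $0<c'<b$ one descends once more to $b'=d_-(a,b,c')$, and the crucial quantitative step is the gap inequality $b'<\frac{b}{ac'}<\frac{6.85}{c'}$, which yields the \emph{absolute} bound $b'c'\le 6$. The few surviving configurations reduce (via the square condition on $b'c'+4$) essentially to the pair $\{1,5\}$, for which $a$ and $b$ are consecutive terms $a_\nu^{\pm}, a_{\nu+1}^{\pm}$ of the extension sequence of $\{1,5\}$, whose ratio decreases to $\left(\frac{3+\sqrt{5}}{2}\right)^2\approx 6.8541>6.85$ --- contradicting $b<6.85a$. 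So you correctly guessed that $6.85$ is calibrated to a borderline case, but misidentified which one: it is this limit ratio for the pair $\{1,5\}$, not a near-miss small fundamental solution. Without the $d_-$ descent and the bound $b'c'\le 6$, your argument does not go through.
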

 \begin{proof}
 We follow the proof of \cite[Lemma 1]{bf_parovi}. Define $s'=\frac{rs-at}{2}$, $t'=\frac{rt-bs}{2}$ and $c'=\frac{(s')^2-4}{a}$. The cases $c'>b$, $c'=b$ and $c'=0$ are the same as in the \cite[Lemma 1]{bf_parovi} and yield $c=c_{\nu}^{\pm}$. It is only left to consider the case $0<c'<b$. Here we define $r'=\frac{s'r-at'}{2}$ and $b'=((r')^2-4)/a$. If $b'=0$ then it can be shown that $c'=c_{1}^-$ and $c=c_{\nu}^-$ for some $\nu$. We observe that $b'=d_{-}(a,b,c')$ so 
 $$b'<\frac{b}{ac'}<\frac{6.85}{c'}.$$
 This implies $b'c'\leq6$. If $c'=1$, since $b'>0$ and $b'c'+4$ is a square, the only possibility is $b'=5$. In that case, $a$ and $b$ extend a pair $\{1,5\}$. Then 
$$a=a_{\nu}^{\pm}=\frac{4}{5}\left\{\left(\frac{\sqrt{5}\pm1}{2}\right)^2\left(\frac{3+\sqrt{5}}{2}\right)^{2\nu}+\left(\frac{\sqrt{5}\mp1}{2}\right)^2\left(\frac{3-\sqrt{5}}{2}\right)^{2\nu}-3\right\}$$
and $b=d_+(1,5,a)=a_{\nu+1}^{\pm}$ for the same choice of $\pm$. Define $k:=\frac{b}{a}=\frac{a_{\nu+1}^{\pm}}{a_{\nu}^{\pm}}$. It can be easily shown that $k\leq 8$ and decreasing as $\nu$ increases. Also,  
$$\lim_{\nu\to\infty}\frac{a_{\nu+1}^{\pm}}{a_{\nu}^{\pm}}=\left(\frac{3+\sqrt{5}}{2}\right)^2=\frac{7+3\sqrt{5}}{2}>6.85,$$
which gives us a contradiction to the assumption $b<6.85a$. 

If $c'\in\{2,3,4,6\}$ there is no $b'$ which satisfies the necessary conditions. The case $c'=5$ gives $b'=1$ which is analogous to the previous case. 
  \end{proof}
  \begin{rem}
  For a pair $ (a,b)=(4620,31680)$, where $b>6.85a$, we have a solution $(s_0,t_0)=(68,178)$ of the  Pellian equation (\ref{par_trojka}), so it can be extended with a greater element to a triple with $c\neq c_{\nu}^{\pm}$. For example $c=146 434 197$. So, this result cannot be improved further more.
  \end{rem}
  
  \begin{proof}[Proof of Proposition \ref{prop_covi}]
  We have
  \begin{align*}
  c_1^{\pm}&=a+b\pm 2r,\\
  c_2^{\pm}&=(ab+4)(a+b\pm2r)\mp4r,\\
  c_3^{\pm}&=(a^2b^2+6ab+9)(a+b\pm2r)\mp4r(ab+3),\\
  c_4^{\pm}&=(a^3b^3+8a^2b^2+20ab+16)(a+b\pm2r)\mp4r(a^2b^2+5ab+6),\\
  c_5^{\pm}&=(a^4b^4+10a^3b^3+35a^2b^2+50ab+25)\mp4r(a^3b^3+7a^2b^2+15ab+10).
  \end{align*}
The aim is to use Theorem \ref{teorem_prebrojavanje} and since $N=0$ for $b<10^5$ we can use the lower bound on $b$ (more precisely, on $b$ if $b<c$ and on $c$ otherwise).   

Case $c_1^{\pm}$ implies that $\{a,b,c\}$ is a regular triple so $N\leq 3$. 

If $a=1$ then $c_2^-<b^2$ so the best conclusion is $N\leq 7$. 
On the other hand, if $c=c_2^-$ and $a^2\geq b$ we have $c>b^2$ since $a+b-2r\geq 1$ so $N=0$. It remains to consider the case when $b>a^2$. Then it can be shown that $r^2<0.004b^2$ and $c_2^->0.872ab^2$, so if $a\geq2$ we can again conclude $N\leq 6$.
Also if $c\geq c_2^+=(a+b)(ab+4)+2r(ab+2)>b^2$ we have $N\leq 6$.
Observe that $c\geq c_5^{-}>a^4b^4$. If $b\leq7104a$ then $c>\frac{10^{20}}{7104^4}b^4>39263b^4$. It follows that $N=0$ by Theorem \ref{teorem_prebrojavanje}. If $b>7104a$ then $a+b-2r>0.97b$ so $c_5^{-}>0.97ba^4b^5>97000b^4$, and again $N=0$.

Similarly, we observe $c_4^->a^3b^3$, and if $b\leq63a$ we get $N=0$. If $b>63a$ we have $c+b-2r>0.76b$ which will lead to $N=0$ when $a\geq 38$. Cases $a\leq 37$ can be studied separately. We remark that only $a\geq 35$ led to $N=0$, and others to $N\leq 6$. 
  \end{proof}

  From Proposition \ref{prop_covi} and Lemma \ref{lema_b685a}  we can conclude the result of Corollary \ref{kor2} after observing that $10^5<b<6.85a$ implies $a\geq 14599$.

\bigskip
\textbf{Acknowledgement:} The author was supported by the Croatian Science Foundation under the project no.\@ IP-2018-01-1313.\\


\end{document}